\documentclass[11pt]{article}

\usepackage{latexsym}
\usepackage{amssymb}
\usepackage{amsthm}
\usepackage{amscd}
\usepackage{amsmath}
\usepackage{verbatim}
\usepackage{xcolor}
\usepackage{enumerate}
\usepackage{graphicx}

\newtheorem{theorem}{Theorem}[section]

\newtheorem{lemma}[theorem]{Lemma}
\newtheorem*{lemma*}{Lemma}

\newtheorem*{definition*}{Definition}
\newtheorem{definition}[theorem]{Definition}
\newtheorem{proposition}[theorem]{Proposition}
\newtheorem{corollary}[theorem]{Corollary}

\theoremstyle{definition}

\theoremstyle{remark}
\newtheorem{remark}{Remark}[section]

\numberwithin{equation}{section}

\let\inf\relax \DeclareMathOperator*\inf{\vphantom{p}inf}

\setlength{\evensidemargin}{1in}
\addtolength{\evensidemargin}{-1in}
\setlength{\oddsidemargin}{1in}
\addtolength{\oddsidemargin}{-1in}
\setlength{\topmargin}{1in}
\addtolength{\topmargin}{-1.5in}

\setlength{\textwidth}{16.5cm}
\setlength{\textheight}{23cm}

\newcommand{\leqn}{\begin{equation}\label}
\def\endeqn{\end{equation}}

      \def\RR{\mathbb{R}}

\newcommand{\NN}{\mathbb{N}}

\newcommand{\res}{\hbox{{\vrule height .22cm}{\leaders\hrule\hskip .2cm}}}

\makeatletter
\renewcommand{\@makefnmark}{\mbox{\textsuperscript{}}}
\makeatother

\def\adots{\mathinner{\mkern2mu\raise0pt\hbox{.}  
\mkern2mu\raise4pt\hbox{.}\mkern1mu
\raise7pt\vbox{\kern7pt\hbox{.}}\mkern1mu}}
\def\res{\hbox{ {\vrule height .22cm}{\leaders\hrule\hskip.2cm} } }

\allowdisplaybreaks[1]

\begin{document}

\title{A sharp bound on the Hausdorff dimension of the singular set of a uniform measure}
\author{ A. Dali Nimer}
\date{}
\maketitle\footnote{The author was partially supported by NSF RTG 0838212, DMS-1361823 and DMS-0856687}
\footnote{Department of Mathematics, University of Washington, Box 354350, Seattle, WA
98195-435.

E-mail address: nimer@uw.edu}

\begin{abstract}
The study of the geometry of $n$-uniform measures in $\RR^{d}$ has been an important question in many fields of analysis since Preiss' seminal proof of the rectifiability of measures with positive and finite density.
The classification of uniform measures remains an open question to this day. In fact there is only one known example of a non-trivial uniform measure, namely $3$-Hausdorff measure restricted to the Kowalski-Preiss cone.
Using this cone one can construct an $n$-uniform measure whose singular set has Hausdorff dimension $n-3$. 
 
 In this paper, we prove that this is the largest the singular set can be. Namely, the Hausdorff dimension of the singular set of any $n$-uniform measure is at most $n-3$.
\end{abstract}

\section{Introduction}

We study the geometry of the singular set of $n$-uniform measures. Understanding the geometry of $n$-uniform measures has been an important question in geometric measure theory ever since they first appeared in the proof of Preiss' theorem on the rectifiability of measures. 
Indeed, to describe the local and global structure of a measure, Preiss defined its tangent measures:  those roughly consist in blowing up (zooming in) or blowing down (zooming out) a measure at a point. It turns out that for a measure having positive finite density, almost all of its tangents are $n$-uniform.
 Since the tangents of any measure with `nice' density ratio are $n$-uniform, the study of the geometry of uniform measures is crucial in many contexts from geometric measure theory and harmonic analysis to PDE's (see $\cite{DKT}$, $\cite{PTT}$ or $\cite{KT}$ for examples). More precisely, any dimension reduction argument for the singular set of $n$-uniform measures should lead to an improvement in the description of the singular set of measures whose tangents are $n$-uniform.
 
 The investigation in this paper is motivated by the open question of the classification of $n$-uniform measures. Indeed, the only known $n$-uniform measures are the flat ones (namely $n$-Hausdorff measure restricted to an $n$-plane), the Kowalski-Preiss cone (or KP cone) $C$ of dimension $3$ defined as
\begin{equation}
\label{KPcone}
C =\{(x_1,x_2,x_3,x_4); x_4^2=x_1^2+x_2^2+x_3^2 \}.
\end{equation}
and products of the former.
This paper gives a sharp bound on the Hausdorff dimension of the singular set of an $n$-uniform measure in $\RR^{d}$, where a point is called singular if its tangent is not $n$-flat: we prove that the Hausdorff dimension of the singular set of an $n$-uniform measure is at most $n-3$. 
This bound effectively proves that the case of the KP-cone is in fact the worst in terms of the dimension of its singular set.

A measure $\Phi$ is $n$-rectifiable if it is absolutely continuous to $\mathcal{H}^n$ and there exists a countable collection of $C^1$ $n$-manifolds $\left\lbrace M_j \right\rbrace_{j}$ such that $\Phi (\RR^{d} \backslash \bigcup_{j} M_{j})=0$. In $\cite{P}$, Preiss proved the following remarkable theorem relating the rectifiability of a measure to its density.
\begin{theorem}[\cite{P}] \label{Preiss}
A Radon measure $\Phi$ of $\RR^d$ is $n$-rectifiable if and only if it satisfies the following property:
 \begin{equation}\label{densityproper} \mbox{ The density }
\Theta^{n}(x)= \lim_{r \to 0} \frac{\Phi(B(x,r))}{\omega_{n} r^{n}}
\mbox{ exists and is positive and finite for }\Phi-\mbox{a.e. } x \in \RR^{d}.
 \end{equation}
\end{theorem}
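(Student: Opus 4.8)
I will address the statement in its two directions; the first is classical and the second is the deep implication due to Preiss.

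\textbf{The easy direction: $n$-rectifiable $\Rightarrow$ \eqref{densityproper}.} The plan here is routine. If $\Phi$ is $n$-rectifiable, write $\Phi=\theta\,\mathcal H^n\llcorner E$ for a countably $n$-rectifiable Borel set $E$ and a positive Borel function $\theta$. At $\mathcal H^n$-a.e.\ point of $E$ the rectifiable set $E$ possesses an approximate tangent $n$-plane, so $\mathcal H^n\llcorner E$ has density $1$ there; since $\theta$ is $(\mathcal H^n\llcorner E)$-approximately continuous $\mathcal H^n$-a.e., a Vitali covering argument gives $\Theta^n(x,\Phi)=\theta(x)\in(0,\infty)$ for $\Phi$-a.e.\ $x$.

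\textbf{The hard direction: \eqref{densityproper} $\Rightarrow$ $n$-rectifiable.} I would proceed through tangent measures. First, the two-sided density bounds from \eqref{densityproper} give uniform mass bounds on the blow-ups $\Phi_{x,r}(A):=r^{-n}\Phi(x+rA)$, so $\Tan(\Phi,x)\neq\emptyset$ for $\Phi$-a.e.\ $x$ by weak-$*$ compactness. Next, combining the a.e.\ existence of the density with the principle that tangent measures of tangent measures are tangent measures, one shows that for $\Phi$-a.e.\ $x$ every $\nu\in\Tan(\Phi,x)$ is $n$-\emph{uniform}: $\nu(B(z,s))=\omega_n\Theta^n(x,\Phi)\,s^n$ for all $z\in\supp\nu$ and all $s>0$. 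The rectifiability of $\Phi$ will then follow, via the Marstrand--Mattila rectifiability criterion (or a direct covering argument producing the $C^1$ manifolds), once one shows that for $\Phi$-a.e.\ $x$ every such $\nu$ is \emph{flat}, i.e.\ a constant multiple of $\mathcal H^n$ on an $n$-plane.

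\textbf{Analysis of uniform measures and flatness of tangents.} The core is a study of $n$-uniform measures $\nu$ with density constant $c$. The layer-cake formula gives, for every $z\in\supp\nu$ and every $t>0$,
\[
\int_{\RR^d} e^{-t|z-y|^2}\,d\nu(y)=c\,\Gamma\!\left(\tfrac n2+1\right)t^{-n/2},
\]
so this Gaussian average is \emph{constant} on $\supp\nu$, while for general $z\in\RR^d$ it is a real-analytic function of $z$. I would expand this identity to first and second order as $z$ moves off $\supp\nu$ and examine the regimes $t\to\infty$ (zoom in) and $t\to0$ (zoom out): this extracts a distinguished vector $b(\nu)\in\RR^d$ and quadratic moment constraints, shows that the blow-down at infinity $\Tan(\nu,\infty)$ is nonempty and consists of conical (dilation-invariant) $n$-uniform measures, and yields the rigidity statement that if some element of $\Tan(\nu,\infty)$ is flat then $\nu$ itself is flat. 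A connectedness argument on the set of tangent measures, together with a density-point/porosity argument ruling out the ``bad'' set, then upgrades this to: for $\Phi$-a.e.\ $x$, \emph{every} $\nu\in\Tan(\Phi,x)$ is flat, completing the proof.

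\textbf{Main obstacle.} The hard part is precisely this last block: controlling general uniform measures through their Gaussian moments, handling the blow-down at infinity and the induced conical uniform measures, and proving that non-flat tangents cannot persist $\Phi$-a.e. The low-dimensional cases $n=1,2$ are substantially easier because there every $n$-uniform measure is already flat; for $n\ge 3$ the argument must tolerate genuinely non-flat uniform measures such as the KP cone \eqref{KPcone}, which is what makes the moment analysis delicate.
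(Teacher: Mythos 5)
The paper does not prove this statement at all: Theorem \ref{Preiss} is quoted from Preiss's paper [P] and used as a black box, so there is no internal proof to measure your attempt against; the only fair comparison is with Preiss's original argument. Against that benchmark, your outline is faithful to the actual architecture (easy direction via approximate tangent planes; hard direction via blow-ups, a.e.\ uniformity of tangent measures, Gaussian moment identities, the blow-down at infinity, the ``flat at infinity implies flat'' rigidity, and a connectedness/stability argument), and your layer-cake identity $\int e^{-t|z-y|^2}\,d\nu(y)=c\,\Gamma(\tfrac n2+1)\,t^{-n/2}$ on the support is correct.

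However, as a proof there is a genuine gap: everything that makes the theorem hard is asserted rather than carried out. The step ``for $\Phi$-a.e.\ $x$ every tangent is $n$-uniform'' needs the locality and transitivity properties of tangent measures and is only named; more seriously, the rigidity statement that a uniform measure whose tangent at infinity is flat must itself be flat (the quantitative $\epsilon_{0}$-separation recorded in this paper as Theorem \ref{flatnessinfty}) and the connectedness argument that propagates flatness from the cone at infinity to $\Phi$-a.e.\ tangent are precisely where Preiss's long analysis of the forms $b_{k,s}$, their Taylor expansions in $s\to 0$ and $s\to\infty$, and the resulting algebraic constraints on the support resides. Your sketch points at this block and explicitly defers it (``one shows'', ``yields the rigidity statement''), so the decisive content of the theorem --- especially for $n\ge 3$, where non-flat uniform measures such as the KP cone genuinely exist and must be excluded only asymptotically, not pointwise --- is missing. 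The proposal is a correct roadmap of [P], not a proof.
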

To prove this theorem, Preiss studies the geometry of $n$-uniform measures which appear as tangents (blow-ups) to measures satisfying $\eqref{densityproper}$.
A measure $\mu$ is said to be $n$-uniform if there exists a constant $c>0$ such that for any $x$ in the support of $\mu$ and any radius $r>0$, we have:
\begin{equation}
\mu(B(x,r))=cr^{n}.
\end{equation}
In $\cite{P}$, Preiss also provides a classification of the cases $n=1,2$ in $\RR^{d}$ for any $d$. In these cases, $\mu$ is $n$-Hausdorff measure restricted to a line or a plane respectively.

Interestingly, flat measures are not the only examples of uniform measures. Indeed, in $\cite{KoP}$, Kowalski and Preiss proved that $\mu$ is $(d-1)$-uniform in $\mathbb{R}^d$ if and only if $\mu=\mathcal{H}^{d-1} \res V$
where $V$ is a $(d-1)$-plane, or $d \geq 4$ and there exists an orthonormal system of coordinates
in which $\mu= \mathcal{H}^{d-1} \res (C \times W)$ where $W$ is a $(d-4)$-plane and $C$ is the KP-cone.
The classification for $n \geq 3$ and codimension $d-n \geq 2$ remains an open question.

Kirchheim and Preiss later proved in $\cite{KiP}$ that the support $\Sigma$ of a uniformly distributed measure (of which $n$-uniform measures are an example) is a real analytic variety, namely the intersection of countably many zero sets of analytic functions. An application of the stratification theorem for real analytic varieties implies that $\Sigma$ must be a countable union of real analytic manifolds and the singular set has Hausdorff dimension at most $(n-1)$.

We investigate the Hausdorff dimension of the singular set $\mathcal{S}_{\mu}$ of an $n$-uniform measure $\mu$. Our main result is the following theorem.

\begin{theorem} \label{maintheorem} Let $\mu$ be an $n$-uniform measure in $\RR^{d}$, $ 3 \leq n \leq d$.
Then
$$dim_{\mathcal{H}}(\mathcal{S}_{\mu}) \leq n-3.$$
\end{theorem}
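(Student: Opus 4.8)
The plan is to run a dimension-reduction argument of Almgren–Federer type, adapted to the setting of uniform measures. The key object is the space of tangent measures $\mathrm{Tan}(\mu,x)$; by Preiss' theory each such tangent is again $n$-uniform, and by the classification results of Kowalski–Preiss and Kirchheim–Preiss the ``flat'' locus is open in the support and the measure is a real-analytic variety. The first step is to set up a quantitative stratification: for each point $x$ in the singular set $\mathcal{S}_\mu$, consider the maximal dimension $k=k(x)$ of a linear subspace $V$ along which some tangent measure at $x$ is translation-invariant (i.e.\ a ``spine'' of a tangent). Define $\mathcal{S}^k_\mu = \{x : k(x) \le k\}$. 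Standard Federer-style arguments (using that a tangent invariant under an $(n-1)$-plane must itself be $n$-flat, hence $x$ would be a regular point) show that for $x \in \mathcal{S}_\mu$ the spine dimension is at most $n-2$, and more importantly that if a tangent at $x$ were invariant along an $(n-2)$-plane then, by the $n-1$ codimension-one structure of the cross-section and the $1$- and $2$-dimensional classification in Preiss' theorem together with the KP-classification, the cross-sectional $2$-uniform (or $(n-(n-2))$-uniform) measure is forced to be a cone, which then again forces flatness or reduces to a genuinely $3$-dimensional obstruction. The heart of the matter is therefore the local model near a point whose tangents split off a Euclidean factor of dimension exactly $n-3$ and whose cross-section is a $3$-uniform measure in the orthogonal complement.

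The second step is to prove a ``no small spine'' dimension bound: if every tangent at $x$ has spine dimension $\le n-3$, then the set of such points has Hausdorff dimension $\le n-3$. For this I would use the cone-splitting / Reifenberg-type dichotomy: near a point with a tangent that is $(n-3)$-invariant but not $(n-2)$-invariant, at most $n-3$ of the ``rotational'' directions can be present simultaneously at nearby points without forcing an extra translation symmetry (hence flatness), and a covering argument — estimating at each scale the number of balls needed to cover $\mathcal{S}^{n-3}_\mu \setminus \mathcal{S}^{n-4}_\mu$ — yields the $(n-3)$-dimensional bound. The absolute continuity $\mu \ll \mathcal{H}^n$ and the exact density equality $\mu(B(x,r)) = c r^n$ give strong compactness for the blow-up procedure and prevent accumulation of spine directions, which is what makes the covering estimate close.

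The third step disposes of points with spine dimension exactly $n-2$: here the tangent is $\mathcal{H}^{n-2} \times \nu$ for a $2$-uniform measure $\nu$ in a plane, and by Preiss' classification in dimension $2$, $\nu = \mathcal{H}^2 \res P$, so the tangent is $n$-flat and $x \notin \mathcal{S}_\mu$ — so this stratum is empty. Combining: $\mathcal{S}_\mu = \mathcal{S}^{n-3}_\mu$ up to the empty top stratum, and Step 2 gives $\dim_\mathcal{H}(\mathcal{S}_\mu) \le n-3$.

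The main obstacle, I expect, is Step 2 — specifically, ruling out that singular points with $(n-3)$-dimensional spines can pile up into a set of dimension strictly larger than $n-3$. This requires a genuinely quantitative statement: a ``rigidity with error'' lemma saying that if $\mu$ is close (in the weak sense, at a definite scale) to an $(n-3)$-invariant uniform measure at two nearby points whose spines are nearly orthogonal, then $\mu$ is in fact close to an $(n-2)$-invariant one, hence nearly flat. Extracting such a statement from the real-analytic structure of $\Sigma$ (rather than from an a priori $\varepsilon$-regularity theorem, which is exactly what is missing for $n \ge 3$) is the delicate point, and I would expect to spend most of the work establishing the needed compactness and uniqueness-of-tangent-type estimates for $3$-uniform cross-sections.
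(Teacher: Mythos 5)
Your overall strategy --- Federer dimension reduction via a spine-dimension stratification, with the top strata emptied by Preiss' classification of $1$- and $2$-uniform measures --- is the right skeleton and is consonant with the paper's. The gap is in Step~2, where you need $\dim_{\mathcal{H}}\mathcal{S}^{n-3}_\mu \le n-3$. You propose to obtain it from quantitative cone-splitting plus a Reifenberg-type covering argument, and you correctly flag that this would require a ``rigidity with error'' lemma you do not have. Such a quantitative lemma is indeed unavailable for $n$-uniform measures with $n\ge 3$: there is no $\varepsilon$-regularity theorem at a single scale (only separation at infinity, Theorem~\ref{flatnessinfty}), and extracting quantitative rigidity from the Kirchheim--Preiss analytic-variety structure is far from routine, as the defining polynomials have unbounded degree.

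What you are missing is that the Federer bound can be closed by a \emph{soft} semicontinuity statement, which is what the paper actually proves and uses. The key lemma is that singularity persists under blow-ups along converging sequences of singular points: if $x_j\in\mathcal{S}_\mu$ and $(x_j-x_0)/r_j\to y$, then $y\in\mathcal{S}_{\mu^{x_0}}$ (Theorem~\ref{accumsing}, with covering consequence Corollary~\ref{convsingset}). This is established via a connectedness lemma for blow-ups along a common sequence of base points (Lemma~\ref{connectednessblowups}), whose proof combines uniqueness of tangents (Theorem~\ref{uniqueness}) with Preiss' separation at infinity. With this in hand, the dimension reduction closes softly: blow up at a singular point of positive upper $\mathcal{H}^s$-density to get a conical non-flat tangent with $\mathcal{H}^s$-positive singular set; pick a non-zero singular point of positive density off the spine; blow up again to gain a translation direction while retaining $\mathcal{H}^s$-positivity of the singular set; iterate, and let your Step~3 deliver the contradiction. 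The paper packages this as an induction on $n$ grounded at $n=3$, where the base case is itself a substantial piece of work (a polar decomposition of conical uniform measures showing that the spherical component of a conical $3$-uniform measure is locally $2$-uniform, followed by the PTT regularity theory, Theorem~\ref{onesingularity}) that is entirely absent from your sketch. Prove the soft blow-up semicontinuity lemma --- that is the actual missing ingredient.
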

In the cases $n=3$, $d=3$, it is a standard result that the only $3$-uniform measure (up to normalization) is $3$-Lebesgue measure. In this case, the bound is obvious. 
To see that this bound cannot be improved, let $n\geq 3$, $d>n$ and consider the measure $\mu$ defined as:
\begin{equation}
\mu = \mathcal{H}^{n} \res M,
\end{equation}
where $M$ is the set
\begin{equation}
M=\left\lbrace (x_1, \ldots, x_d) ; x_4^2=x_1^2+x_2^2+x_3^2 \mbox{ and } x_{n+1}=\ldots = x_d =0 \right\rbrace.
\end{equation}
By $\cite{KoP}$, since products of uniform measures are uniform, $\mu$ is $n$-uniform. Moreover, the singular set of $\mu$ is $\RR^{n-3}$ which has Hausdorff dimension $n-3$.

This theorem is first proven for the base case $n=3$. The crux of this argument is a theorem stating that conical $3$-uniform measures have at most one singularity at the origin. We then prove that singular sets of $n$-uniform measure behave nicely under blow-ups. This allows us to adapt Federer's dimension reduction argument to generalize our base case to any dimension $n$.

We briefly discuss the steps of our proof. In the first section, we limit our investigation to $n$-uniform conical measures. A conical uniform measure is a measure that is dilation invariant up to appropriate scaling. We first obtain a polar decomposition for such a measure. By polar decomposition, we mean that the measure decomposes into a spherical and a radial component. Using this decomposition, we isolate the spherical component of a $3$-uniform measure and prove that it is locally $2$-uniform. This allows us to deduce that every point of the spherical component is flat from which the following theorem follows.

 \begin{theorem}\label{theorem1} Let $\nu$ be a conical $3$-uniform measure in $\mathbb{R}^d$ and let $\Sigma$ be its support. Then there exists $\gamma>0$ such that $\Sigma \backslash \left\lbrace 0 \right\rbrace$ is a $C^{1,\gamma}$ submanifold of dimension $3$ . \end{theorem}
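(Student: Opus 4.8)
The plan is to follow the route outlined in the introduction: put $\nu$ in polar coordinates, show that its spherical factor is locally $2$-uniform, invoke Preiss' classification of $2$-uniform measures, and then cone back up. \emph{Step 1 (polar decomposition).} Since $\nu$ is conical, i.e.\ invariant, up to the scaling factor $\lambda^{3}$, under the dilations $y\mapsto\lambda y$, its support $\Sigma$ is a cone with vertex $0$ and $\nu$ disintegrates along radial rays. I would prove that there is a Radon measure $\sigma$ on $\mathbb{S}^{d-1}$ with $\supp\sigma=\Sigma\cap\mathbb{S}^{d-1}$ and a constant $c>0$ such that, writing $y=s\theta$ with $s=|y|>0$, $\theta\in\mathbb{S}^{d-1}$, one has $\nu = c\,s^{2}\,ds\otimes\sigma$. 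Homogeneity forces the radial part of each disintegration fiber to be $s^{2}\,ds$ up to a constant, and conicality makes that constant independent of $\theta$.

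\emph{Step 2 (the spherical factor is locally $2$-uniform).} This is the heart of the proof. Fix $x\in\Sigma\cap\mathbb{S}^{d-1}$ and a small $r>0$. By Step 1,
\[
\nu\big(B(x,r)\big)=c\int_{\mathbb{S}^{d-1}}\Big(\int_{\{s>0:\,|s\theta-x|<r\}} s^{2}\,ds\Big)\,d\sigma(\theta).
\]
For $|x|=1$ and $\theta$ at spherical distance $\alpha$ from $x$ we have $|s\theta-x|^{2}=(s-1)^{2}+2s(1-\cos\alpha)$, so for small $r$ the region is confined to $\alpha\lesssim r$, $|s-1|\lesssim r$, and the inner integral equals $2\sqrt{r^{2}-\alpha^{2}}\,(1+O(r))$. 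Hence $\nu(B(x,r))$ is, up to a controlled multiplicative error, a fixed Abel-type transform of $\rho\mapsto\sigma(B_{\mathbb{S}^{d-1}}(x,\rho))$. Imposing $\nu(B(x,r))=cr^{3}$ for all $r$ and inverting this transform (in the blow-up limit, where the $O(r)$ errors disappear) yields that every tangent measure of $\sigma$ at a point of $\supp\sigma$ is $2$-uniform in $\mathbb{R}^{d-1}$ after identifying $T_{\theta}\mathbb{S}^{d-1}\cong\mathbb{R}^{d-1}$; one also checks the relevant constant is the same at every center, by homogeneity.

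\emph{Step 3 (flatness and coning up).} By Preiss' classification of the cases $n=1,2$ recalled above, each such tangent measure of $\sigma$ is $\mathcal{H}^{2}\res V$ for a $2$-plane $V$; thus every point of $\supp\sigma=\Sigma\cap\mathbb{S}^{d-1}$ is a flat point. Combining this with the Kirchheim--Preiss theorem that $\Sigma$ is a real-analytic variety — so a point with a flat tangent is a regular point of the variety — together with the quantitative decay of the flatness at a flat point coming from the exact density (a Reifenberg/Campanato-type estimate), one concludes $\Sigma\cap\mathbb{S}^{d-1}$ is a $C^{1,\gamma}$ submanifold of $\mathbb{S}^{d-1}$ of dimension $2$ for some $\gamma>0$. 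Finally $\Sigma\setminus\{0\}$ is the punctured cone over $\Sigma\cap\mathbb{S}^{d-1}$, and the cone over a $C^{1,\gamma}$ $2$-submanifold of the sphere is, away from the vertex, a $C^{1,\gamma}$ $3$-submanifold of $\RR^{d}$, which is the claim.

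\emph{Main obstacle.} The delicate point is Step 2: transferring the $n$-uniformity of $\nu$ to the $(n-1)$-uniformity of its spherical factor. The passage from $\nu(B(x,r))$ to $\sigma$ is an integral transform that must be inverted while controlling the errors caused by the fact that a Euclidean ball centered on the sphere is not a product in polar coordinates — the set $\{(s,\theta):|s\theta-x|<r\}$ is a curved lens, not a cylinder — and one must also verify that the extracted constant is genuinely independent of the center, so that the spherical factor is honestly uniform and not merely of the right density. Once this is established, Steps 1 and 3 are comparatively routine, resting on homogeneity and on the already-available classification in dimensions $1$ and $2$.
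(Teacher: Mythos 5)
Your Step 1 matches the paper's Theorem \ref{polar} (polar decomposition via the co-area formula), and your Step 3 (coning a $C^{1,\gamma}$ two-manifold of the sphere back up) is essentially the paper's Theorem \ref{onesingularity}. The genuine gap is in Step 2, and you have correctly identified it as the crux — but your proposed resolution does not actually close it.

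You attack the integral transform relating $\nu(B(x,r))$ to $\rho\mapsto\sigma(B(x,\rho))$ by expanding the kernel with an $O(r)$ error and then ``inverting in the blow-up limit.'' This only yields that every tangent measure of $\sigma$ is $2$-flat, i.e.\ that every point of $\supp\sigma$ is a flat point \emph{qualitatively}. That is not enough: the regularity theorem you need (Theorem \ref{PTT1} of the paper, from Preiss--Tolsa--Toro) demands a quantitative doubling estimate of the form $|R_t(x,r)|\le C_K r^{\alpha}$, and mere flatness of all tangents gives no rate. Your appeal to ``Reifenberg/Campanato-type estimates coming from the exact density'' is circular, because exact density for $\sigma$ is precisely what the asymptotic inversion has failed to establish. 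What you actually need — and what the paper proves in Theorem \ref{spherunifdist} and Corollary \ref{3unif} — is that $\sigma$ is \emph{exactly} locally $2$-uniform: $\sigma(B(x,r))=\pi r^2$ for all $r<2$ and all $x\in\supp\sigma$, so that $R_t\equiv 0$.

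The idea you are missing is Preiss' Theorem 3.10 (Theorem \ref{momentsconical} in the paper, equation \eqref{protopolar}): for a conical $n$-uniform $\nu$, any integral $\int f(|z|^2,\langle z,u\rangle)\,d\nu(z)$ with $u\in\Sigma$ equals a fixed constant times the corresponding integral against $\mathcal{L}^n$ on $\RR^n$. Since the indicator of the lens $(B_r(x))_1^{\epsilon}$ is exactly such a function of $|z|$ and $\langle z,x\rangle$ when $x\in\Omega$, this gives $\nu\bigl((B_r(x))_1^{\epsilon}\bigr)$ \emph{exactly}, not up to $O(r)$, as the Lebesgue-measure analogue; dividing by $2\epsilon$ and letting $\epsilon\to 0$ yields $\sigma(B_r(x))=\mathcal{H}^{n-1}(B_r(e)\cap S^{n-1})$ on the nose. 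This bypasses entirely the ``curved lens'' difficulty you flag: there is no Abel inversion and no error to control. With the exact formula in hand, PTT applies with $R_t\equiv 0$ and delivers the $C^{1,\beta}$ regularity of $\Omega$; the cone-up step then goes through essentially as you describe, with one further application of Corollary \ref{consPTT2} at each nonzero point of $\Sigma$ to supply a single $\gamma$.
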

 
 In the case where $\nu$ is a conical $n$-uniform measure, for general $n$, the spherical component turns out to be uniformly distributed.
 This means that there exists a function $\phi: \RR_{+} \rightarrow \RR_{+}$ such that for any point $x$ of its support, and any positive radius $r>0$ $$\nu(B(x,r))=\phi(r).$$
 We use this result to show that for a conical $n$-uniform measure, Kirchheim and Preiss' result can be improved to an algebraic variety that is symmetric with respect to the origin.
 
 \begin{theorem}\label{theorem2}
  Let $\nu$ be a conical $n$-uniform measure in $\mathbb{R}^d$ and $\Sigma$ its support. Then $\Sigma$ is an algebraic variety and \begin{equation}
 \Sigma=-\Sigma.
 \end{equation}
 \end{theorem}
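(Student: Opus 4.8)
The plan is to exploit the fact, established earlier in this section, that the spherical component of a conical $n$-uniform measure $\nu$ is uniformly distributed: there is a function $\phi\colon\RR_+\to\RR_+$ so that $\nu(B(x,r))=\phi(r)$ for every $x\in\Sigma$ and every $r>0$. By the Kirchheim--Preiss theorem (\cite{KiP}), the support of a uniformly distributed measure is a real analytic variety, so $\Sigma$ is at the very least a countable intersection of zero sets of real analytic functions. The two improvements I need are (i) that $\Sigma$ is in fact \emph{algebraic}, and (ii) that $\Sigma=-\Sigma$. The natural tool for both is the family of moment-type functions that Kirchheim and Preiss attach to a uniformly distributed measure: for $s>0$ one considers $F_s(x)=\int e^{-s|x-y|^2}\,d\nu(y)$, which is real analytic in $x$, is constant $\equiv b(s)$ on $\Sigma$, and whose Taylor coefficients at the origin are polynomials in $x$ built from the moments of $\nu$; the statement ``$\Sigma$ is analytic'' comes from writing $\Sigma$ as the common zero set of $x\mapsto F_s(x)-b(s)$.

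First I would record the dilation structure. Since $\nu$ is conical, $\Sigma$ is a cone: $t\Sigma=\Sigma$ for all $t>0$, and the uniform-distribution function satisfies the homogeneity $\phi(tr) = t^{?}\phi(r)$ forced by the scaling of $\nu$ under dilations (one extracts the exact exponent from the conical scaling law, but only the homogeneity is needed). Plugging the scaling of $\nu$ into $F_s$, a change of variables $y\mapsto ty$ shows $F_s(tx)$ is again a Gaussian integral against $\nu$ with a rescaled parameter, so the family $\{F_s\}_s$ is permuted (up to an explicit positive scalar and reparametrization $s\mapsto t^{-2}s$) by the dilation $x\mapsto tx$. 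Consequently each homogeneous piece in the Taylor expansion of $F_s$ at $0$ must itself vanish on $\Sigma$: if $P_k(x)$ denotes the degree-$k$ part of $F_s(\cdot)-b(s)$ at the origin, then $x\in\Sigma\iff tx\in\Sigma$ for all $t$ forces $\sum_k t^k P_k(x)=0$ identically in $t$ for $x\in\Sigma$, hence $P_k|_\Sigma\equiv 0$ for every $k$. Thus $\Sigma$ is contained in the common zero set of the polynomials $\{P_k\}$; the reverse inclusion holds because summing the $P_k$ recovers $F_s-b(s)$, whose zero set is exactly $\Sigma$. Since each $P_k$ is a genuine polynomial, $\Sigma$ is an algebraic variety. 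The symmetry $\Sigma=-\Sigma$ then comes out for free: the key moment identity from the conical/uniform structure is that the \emph{odd} moments of $\nu$ vanish — equivalently, the odd-degree $P_k$ are identically zero — because the spherical component being uniformly distributed together with conicality pins down all the moment polynomials, and the surviving defining polynomials are even. An equivalent and perhaps cleaner route: show directly that $F_s(-x)=F_s(x)$ for $x\in\Sigma$ by using that $-\Sigma$ is also the support of a conical $n$-uniform measure with the same $\phi$, and the Kirchheim--Preiss rigidity (a uniformly distributed measure is determined near a point by its distribution function) forces the two varieties to coincide; once $F_s$ is even on $\Sigma$, the odd Taylor parts vanish and $\Sigma=-\Sigma$.

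Carrying this out, the main steps in order are: (1) invoke uniform distribution of the spherical component and pass to the Kirchheim--Preiss Gaussian family $F_s$; (2) compute the effect of dilations on $F_s$ using the conical scaling of $\nu$; (3) deduce that each homogeneous Taylor component of $F_s-b(s)$ vanishes on $\Sigma$, giving an algebraic (polynomial) defining set, hence $\Sigma$ is an algebraic variety; (4) show the odd-degree components vanish — either via vanishing of odd moments of the conical measure or via the $-\Sigma$-symmetry of the Kirchheim--Preiss data — to conclude $\Sigma=-\Sigma$.

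The step I expect to be the main obstacle is (3)--(4): making rigorous the passage from ``$\Sigma$ is cut out by analytic functions that are, after using conicality, sums of homogeneous polynomials'' to ``$\Sigma$ is cut out by finitely many (or at least a well-defined family of) genuine polynomials'' — one must be careful that the homogeneous components $P_k$ are themselves real analytic/polynomial of the claimed degree and that their common zero set is not strictly larger than $\Sigma$. The interchange of the infinite Taylor sum with the conicality-forced term-by-term vanishing, and controlling convergence so that $\{P_k=0\ \forall k\}$ really equals $\{F_s=b(s)\}=\Sigma$ rather than something bigger, is where the care is needed; the evenness/odd-moment vanishing in (4) should then follow from the same moment bookkeeping once (3) is in place.
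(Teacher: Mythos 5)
Your outline identifies the right ingredients — conicality, the uniform distribution of the spherical component, and the Preiss/Kirchheim--Preiss moment machinery — but it has a genuine gap at exactly the point you flag, and the paper closes that gap with a specific move you do not make.

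The forward inclusion is fine: for a conical uniformly distributed measure, Preiss's Theorem $\ref{momentsconical}$, specifically $\eqref{varietyconical}$, already gives $\Sigma \subset \bigcap_{k>0}\{x : b_{2k}^{k}(x^{2k}) = |x|^{2k}\}$, which is exactly your observation that the homogeneous Taylor pieces of $F_s - b(s)$ vanish on the cone $\Sigma$. The problem is the reverse inclusion. You claim it ``holds because summing the $P_k$ recovers $F_s-b(s)$, whose zero set is exactly $\Sigma$.'' But for a fixed $s$, the zero set $\{x : F_s(x)=b(s)\}$ is not known to equal $\Sigma$; the Kirchheim--Preiss description of $\Sigma$ as an analytic variety uses the \emph{entire family} $\{F_s\}_{s>0}$, and there is no a priori reason a single $s$ (or the extracted homogeneous polynomial family $\{P_k\}$) cuts out only $\Sigma$ and nothing larger. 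This is not a convergence issue — $F_s$ is entire in $x$ — but a genuine failure of the claimed equivalence, so your step (3) does not establish that the polynomial zero set is no bigger than $\Sigma$.

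The paper avoids this by working with the spherical component $\sigma = \mathcal{H}^{n-1}\res(\Sigma\cap S^{d-1})$ rather than with $\nu$ itself. Because $\Omega = \operatorname{supp}(\sigma)$ is contained in $S^{d-1}$, it is \emph{bounded}, and the sharp Kirchheim--Preiss characterization for bounded support (Theorem $\ref{boundedvariety}$, quoted as $\eqref{boundedsupport}$) applies: $x\in\Omega$ if and only if \emph{all} the polynomial moment differences $\int\bigl(\langle z-x,z-x\rangle^{k} - \langle z-u,z-u\rangle^{k}\bigr)\,d\sigma(z)$ vanish. In Theorem $\ref{algVar}$ the paper shows, by a direct computation using the polar decomposition (Theorem $\ref{polar}$) and the vanishing of odd moments, that for any unit vector $x$ satisfying the conditions $p_{2k}(x)=1$ these moment differences automatically vanish, which forces $x\in\Omega$ and hence gives the exact equality $\Omega = \{|x|=1\}\cap\bigcap_{k}\{p_{2k}(x)=|x|^{2k}\}$. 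Only then is the cone structure used to lift this to $\Sigma = \bigcap_{k}\{b_{2k}^{k}(x)=|x|^{2k}\}$, an algebraic variety, and the symmetry $\Sigma=-\Sigma$ drops out because each $b_{2k}^{k}$ is a homogeneous polynomial of \emph{even} degree. You invoke the uniform distribution of $\sigma$ but then apply the Gaussian $F_s$ to $\nu$ directly, which keeps you in the unbounded setting where the characterization is not exact; passing to the bounded spherical measure $\sigma$ and using the bounded-support version of Kirchheim--Preiss is the missing idea.
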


 In the second section, we first start by proving a lemma about the connectedness of  blow-ups along a sequence of points.
 This connectedness is expressed in terms of a measure's distance from flat measures.
 To this effect, we use a positive functional  $F$ defined on Radon measures satisfying $F(\mu)=0$ if and only if $\mu$ is flat .
 \begin{lemma}
 There exists $\epsilon_{0}>0$ such that the following holds. Let $\mu$ be an $n$-uniform measure in $\RR^{d}$, $\left\lbrace x_k \right\rbrace_{k} \subset \mbox{supp}(\mu)$ and $\left\lbrace \tau_{k} \right\rbrace_k$, $\left\lbrace \sigma_{k} \right\rbrace_{k}$ sequences of positive numbers decreasing to $0$.
We also assume that $\sigma_k < \tau_k$ and that there exist $n$-uniform measures $\alpha$ and $\beta$ such that:
$$\mu_{x_k,\tau_k} \rightharpoonup \alpha \mbox{ and } \mu_{x_k,\sigma_k} \rightharpoonup \beta.$$
Then:
$$F(\alpha) < \epsilon_{0} \implies F(\beta) < \epsilon_{0}.$$

 \end{lemma}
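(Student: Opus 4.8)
The plan is to argue by contradiction via a compactness argument, the standard "go to a limit and derive a rigidity" mechanism. Suppose no such $\epsilon_0$ works. Then for each $j$ we get a counterexample: an $n$-uniform measure $\mu^{(j)}$, points $x_k^{(j)}$, scales $\sigma_k^{(j)} < \tau_k^{(j)} \to 0$, and limit measures $\alpha^{(j)}, \beta^{(j)}$ with $F(\alpha^{(j)}) < 1/j$ but $F(\beta^{(j)}) \ge 1/j$... but that isn't quite the right setup, since $1/j \to 0$ would just force $\beta^{(j)}$ flat in the limit. The cleaner formulation: fix the threshold we are trying to establish and negate it directly — for each $j$, there is data with $F(\alpha^{(j)}) < 1/j$ but $F(\beta^{(j)}) \ge \delta_0$ for some fixed $\delta_0 > 0$ (if no uniform $\epsilon_0$ exists, we can extract such a sequence with a fixed lower bound on $F(\beta^{(j)})$ along a subsequence). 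Then I would pass $\mu^{(j)}$ (suitably recentered and rescaled, e.g. translating $x_k^{(j)}$ to the origin) to a weak limit, using the fact that $n$-uniform measures with a fixed constant $c$ are precompact in the weak topology and the limit is again $n$-uniform; the constant $c$ can be normalized, so WLOG it is fixed.

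The heart of the argument is a \emph{monotonicity} or \emph{intermediate-scale} principle: because $\sigma_k < \tau_k$ and both $\mu_{x_k,\tau_k} \rightharpoonup \alpha$ with $F(\alpha)$ small, the measure $\mu$ is already close to flat at scale $\tau_k$ around $x_k$; one then wants to propagate this down to scale $\sigma_k$. The mechanism should be that $F$ (or an auxiliary functional like the one Preiss uses, controlling the deviation of the "moments" $\int |\langle x, \cdot\rangle|^2 \, d\mu$ from those of a plane) is \emph{almost monotone} under zooming in for uniform measures — the uniformity $\mu(B(x,r)) = cr^n$ rigidly constrains how the second-moment data can change between scales. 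Concretely, I expect one shows that for an $n$-uniform measure, if $F(\mu_{x,\tau}) < \epsilon$ then $F(\mu_{x,s}) < C\epsilon$ for all $s \le \tau$ (with $C$ absolute), perhaps by first establishing it for the limit objects $\alpha$ — a conical or flat-ish measure at infinity — and then transferring. Equivalently, and perhaps more in the spirit of this paper, one uses that $\mu_{x,\tau} \rightharpoonup \alpha$ flat-ish forces the \emph{tangent measure at infinity} structure, and Preiss-type connectedness of tangent measures (the set of tangent measures is connected) prevents $F$ from jumping from below $\epsilon_0$ to above $\epsilon_0$ across the scale interval $[\sigma_k, \tau_k]$.

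So the key steps, in order, are: (1) normalize the uniform constant and set up the contradiction hypothesis with a fixed lower bound $\delta_0$ on $F(\beta^{(j)})$; (2) recenter so $x_k^{(j)} = 0$ and extract weak limits of $\mu^{(j)}$, of $\mu^{(j)}_{0,\tau_k^{(j)}}$, and of $\mu^{(j)}_{0,\sigma_k^{(j)}}$, using weak precompactness of normalized $n$-uniform measures and lower semicontinuity / continuity properties of $F$ on this class; (3) use the uniform-measure rigidity to show the limit $\alpha_\infty := \lim_j \alpha^{(j)}$ is flat ($F = 0$), hence $\mu$ is asymptotically flat at the relevant scales; (4) invoke a scale-monotonicity estimate for $F$ along uniform measures — the crux — to conclude $F(\beta^{(j)})$ must also tend to $0$, contradicting $F(\beta^{(j)}) \ge \delta_0$. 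The main obstacle is step (4): establishing that $F$ cannot increase (beyond a controlled multiplicative constant) as one passes from the larger scale $\tau_k$ to the smaller scale $\sigma_k$ for an $n$-uniform measure. This is where the paper's polar-decomposition and local-uniformity machinery for conical measures, together with the connectedness of the blow-up family, must be doing the real work; I would expect the proof to reduce the intermediate-scale control to a statement about conical uniform measures (the natural limits under such zooming) and then apply Theorem~\ref{theorem2} and the analyticity/algebraicity of the support to rule out the jump.
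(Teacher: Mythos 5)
Your proposal has the right general ingredients---compactness of normalized $n$-uniform measures, continuity of $F$ under weak convergence, and Preiss's flat/non-flat dichotomy at infinity---but the setup and the key mechanism are both off, so there is a genuine gap.

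First, the setup. You negate the existence of $\epsilon_0$ and extract a sequence of measures $\mu^{(j)}$ with $F(\alpha^{(j)}) \to 0$, $F(\beta^{(j)}) \geq \delta_0$. But in the paper $\epsilon_0$ is not discovered by compactness; it is the \emph{explicit} constant from Preiss's separation theorem at infinity (Corollary \ref{functionalflatness}): for conical $n$-uniform measures with $n \geq 3$, $F < \epsilon_0$ already forces flatness. The argument therefore fixes a single $\mu$, assumes $F(\alpha) < \epsilon_0$ and $F(\beta) \geq \epsilon_0$, and derives a contradiction directly; there is no diagonal sequence over $j$.

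Second, and more seriously, your ``crux'' step (4) is either wrong or unjustified. You posit a scale-monotonicity $F(\mu_{x,\tau}) < \epsilon \Rightarrow F(\mu_{x,s}) < C\epsilon$ for all $s \leq \tau$. No such estimate is used, and nothing of the sort is established in the paper; indeed the statement being proved is precisely a threshold version of this which requires the full force of Preiss's rigidity. You also invoke ``connectedness of the cone of tangent measures'' as an alternative mechanism, but the paper's own remark warns that cones of pseudo-tangent measures are \emph{not} connected in general, which is exactly why the restriction $\sigma_k < \tau_k$ and the careful choice of an intermediate scale are needed. The missing idea is the following: since $r \mapsto F(\mu_{x_k,r})$ is continuous, pick a $\kappa \in (F(\alpha),\epsilon_0)$ and, by the intermediate value theorem, the largest $\delta_k \in [\sigma_k,\tau_k]$ with $F(\mu_{x_k,\delta_k}) = \kappa$ and $F(\mu_{x_k,r}) \leq \kappa$ on $[\delta_k,\tau_k]$. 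Pass to a subsequential weak limit $\xi$ of $\mu_{x_k,\delta_k}$; it is $n$-uniform with $F(\xi) = \kappa$, hence non-flat. Then show $\tau_k/\delta_k \to \infty$ (otherwise $\alpha$ would be a dilate of $\xi$, contradicting $\alpha$ flat-ish vs.\ $\xi$ non-flat). For each fixed $R > 1$ the scale $R\delta_k$ eventually lies in $[\delta_k,\tau_k]$, giving $F(\xi_{0,R}) \leq \kappa$ for all $R$, so the tangent of $\xi$ at infinity has $F \leq \kappa < \epsilon_0$, and Preiss's theorem forces $\xi$ to be flat---contradiction. Without this intermediate-scale construction, your sketch has no way to produce a non-flat uniform measure whose tangent at infinity is flat, which is the decisive contradiction.
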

 
 We use this lemma to deduce a theorem about the convergence of singular sets. Roughly speaking, blow-ups preserve singularity.
 
 \begin{theorem} \label{theorem3}
 Let $\mu$ be an $n$-uniform measure in $\RR^d$, $x_0 \in \mbox{supp}(\mu)$, $\left\lbrace x_j \right\rbrace _{j} \subset \mathcal{S}_{\mu}$, $\left\lbrace r_j \right\rbrace _{j}$ any sequence of positive numbers decreasing to $0$.
Also assume that $y_j=\frac{x_j - x_0}{r_j} \in \overline{B(0,1)}$, $y_j \to y$.
Then $$y \in \mathcal{S}_{\nu},$$
where $\nu$ is the  tangent to $\mu$ at $x_0$ with appropriate normalization.
\end{theorem}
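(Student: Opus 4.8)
The plan is to argue by contradiction. Recall first that $\nu$ is itself an $n$-uniform measure, being a weak limit of the $n$-uniform measures $\mu_{x_0,r_j}$ (all of which carry the density constant of $\mu$), and that $y\in\supp(\nu)$: indeed $y_j=(x_j-x_0)/r_j\in\supp(\mu_{x_0,r_j})$ because $x_j\in\supp(\mu)$, and a limit of support points of weakly convergent $n$-uniform measures lies in the support of the limit (the uniform lower mass bound of the $\mu_{x_0,r_j}$ at $y_j$ passes to $\nu$ at $y$). Now suppose, for contradiction, that $y\notin\mathcal{S}_{\nu}$, i.e. the tangent of $\nu$ at $y$ is $n$-flat. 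Then $\nu_{y,\sigma}\rightharpoonup V$ for some flat $V$ along a sequence $\sigma\to 0$, and since the functional $F$ is upper semicontinuous under weak convergence of $n$-uniform measures (a property of the functionals used here) and vanishes precisely on flat measures, we may fix a single scale $\sigma_0>0$ with $F(\nu_{y,\sigma_0})<\epsilon_0/2$, where $\epsilon_0$ is the constant of the Lemma.

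The next step is the composition identity for iterated blow-ups. Since $x_0+r_jy_j=x_j$, unwinding the definitions of the rescaled measures (with the normalization convention of the paper, which introduces no spurious constants because all the measures involved are $n$-uniform with the same density) gives
$$\big(\mu_{x_0,r_j}\big)_{y_j,\sigma_0}=\mu_{x_j,\,r_j\sigma_0}.$$
On the other hand, blow-up by a fixed scale is weakly continuous even when the base point moves: from $\mu_{x_0,r_j}\rightharpoonup\nu$ and $y_j\to y$, together with the equiboundedness of mass on compact sets enjoyed by $n$-uniform measures and the uniform continuity of compactly supported test functions, one obtains $\big(\mu_{x_0,r_j}\big)_{y_j,\sigma_0}\rightharpoonup\nu_{y,\sigma_0}$. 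Combining the two, $\mu_{x_j,\,r_j\sigma_0}\rightharpoonup\nu_{y,\sigma_0}$, and upper semicontinuity of $F$ yields $F\big(\mu_{x_j,\,r_j\sigma_0}\big)<\epsilon_0$ for all $j$ sufficiently large; fix one such index $j$.

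Finally, apply the Lemma with the constant sequences $x_k:=x_j$ and $\tau_k:=r_j\sigma_0$, and with $\sigma_k\downarrow 0$ an arbitrary sequence along which $\mu_{x_j,\sigma_k}$ converges weakly, say to $\beta$. Since $F(\mu_{x_j,r_j\sigma_0})<\epsilon_0$, the Lemma gives $F(\beta)<\epsilon_0$; as $\beta$ ranges over all subsequential limits of this form, every tangent measure of $\mu$ at $x_j$ satisfies $F<\epsilon_0$. By the way $\epsilon_0$ is chosen — the Preiss-type rigidity that an $n$-uniform measure with $F$ below $\epsilon_0$ must be flat, which already underlies the Lemma — every tangent of $\mu$ at $x_j$ is $n$-flat, so $x_j$ is a regular point of $\mu$. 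This contradicts $x_j\in\mathcal{S}_{\mu}$, and therefore $y\in\mathcal{S}_{\nu}$.

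The step I expect to be the main obstacle is the second one: verifying the composition identity against the precise ``appropriate normalization'' used in the paper and, more importantly, establishing the weak continuity $\big(\mu_{x_0,r_j}\big)_{y_j,\sigma_0}\rightharpoonup\nu_{y,\sigma_0}$ with a \emph{moving} base point $y_j\to y$. For a fixed base point this is immediate, but the moving-point version requires a genuine (if short) argument, and one must also check that nothing degenerates when $y$ lies on $\partial B(0,1)$ — which it does not, since $\nu_{y,\sigma_0}$ depends only on $\nu$ near $y$.
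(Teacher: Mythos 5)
Your overall strategy mirrors the paper's: rescale so that the Lemma on connectedness of blow-ups can be invoked, and use the Preiss rigidity encoded in $\epsilon_0$ to turn ``$F$ small for the tangent at $x_j$'' into ``$x_j$ is regular,'' contradicting $x_j\in\mathcal{S}_\mu$. The composition identity $(\mu_{x_0,r_j})_{y_j,\sigma_0}=\mu_{x_j,r_j\sigma_0}$, the moving-base-point convergence $\mu_{x_j,r_j\sigma_0}\rightharpoonup\nu_{y,\sigma_0}$, and the observation that $y\in\mbox{supp}(\nu)$ are all correct and appear (in the case $\sigma_0=1$) as $\eqref{nuy}$ in the paper.

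The gap is in the final step, where you apply Lemma \ref{connectednessblowups} with the \emph{constant} sequences $x_k:=x_j$ and $\tau_k:=r_j\sigma_0$. The Lemma explicitly requires $\{\tau_k\}$ to be a sequence of positive numbers \emph{decreasing to $0$}, and this is not a cosmetic hypothesis: its proof locates an intermediate scale $\delta_k\in[\sigma_k,\tau_k]$ with $F(\mu_{x_k,\delta_k})=\kappa$ and then exploits $\tau_k/\delta_k\to\infty$ to control $F$ of the blow-up $\xi=\lim\mu_{x_k,\delta_k}$ at arbitrarily large rescalings, ultimately invoking Corollary \ref{functionalflatness} on the tangent of $\xi$ at infinity. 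With $\tau_k\equiv r_j\sigma_0$ fixed, the largest such intermediate scale $\delta_k$ stabilizes at a positive value $\delta_*$ once $\sigma_k<\delta_*$, so $\tau_k/\delta_k$ stays bounded, $\xi=\mu_{x_j,\delta_*}$ is a fixed-scale rescaling rather than a genuine tangent, and the control $F(\xi_{0,R})\leq\kappa$ holds only for $R$ up to the fixed ratio $r_j\sigma_0/\delta_*$ — no information at infinity, no contradiction. In effect, the constant-$\tau_k$ variant of the Lemma is a stronger, one-scale $\epsilon$-regularity statement (``$F$ small at a single scale forces the tangent at that point to be flat'') that the Lemma as stated and proved does not deliver. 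The fix is to keep the whole sequence in play: take $\tau_k:=r_k\sigma_0\downarrow 0$, so that $\mu_{x_k,\tau_k}\rightharpoonup\nu_{y,\sigma_0}=\alpha$ with $F(\alpha)<\epsilon_0$, and then choose $\sigma_k<\tau_k$ with $\sigma_k\downarrow 0$ so that $\mu_{x_k,\sigma_k}$ tracks the normalized tangents $\mu^{x_k}$, which (after passing to a subsequence) converge to a conical $n$-uniform measure $\nu^\infty$ with $F(\nu^\infty)\geq\epsilon_0$ since each $x_k$ is singular. That is exactly the construction the paper carries out, and with it the Lemma applies legitimately and closes the argument.
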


\section{Preliminaries}
Let us start by defining Hausdorff measure and the concepts of upper and lower density. Though these definitions are standard, we make them to keep track of the constants, especially in the second section.

\begin{definition}
We define $\omega_s$ to be the constant:
\begin{equation}\nonumber
\omega_s=\frac{\pi^{\frac{s}{2}}}{\Gamma(\frac{s}{2} +1)},
\end{equation}
so that in particular $\omega_m$ is the  volume of the unit ball $B^{m}(0,1)$ when $m \in \NN$.
For $\delta \in (0,\infty]$, define $\mathcal{H}^{s}_{\delta}$, $s \leq d$, to be the measure in $\RR^{d}$ defined in the following way. If $A \subset \RR^{d}$

\begin{equation}\nonumber
\mathcal{H}^{s}_{\delta}(A)=\omega_{s} \inf \sum_{j}^{\infty} \left( \frac{diam(E_j)}{2}\right)^{s}, 
\end{equation}
where the infimum is taken over all countable coverings $\left\lbrace E_j \right\rbrace_{j}$ of
 $A$ such that $diam(E_j)<\delta$.

Then define $s$-Hausdorff measure $\mathcal{H}^s$ to be 
\begin{align}
\mathcal{H}^s(A)&=\lim_{\delta \to 0}\mathcal{H}^{s}_{\delta}(A), \nonumber \\
&= \sup_{\delta>0}\mathcal{H}^{s}_{\delta}(A).
\end{align}
It is a standard result of measure theory that $\mathcal{H}^s$ is a Borel regular measure on $\RR^{d}$.
\end{definition}

\begin{definition}
Let $\Phi$ be a measure on $\RR^{d}$, $x \in \RR^{d}$. We define the lower density $\theta_{*}^{s}(\Phi, x)$ and upper density $\theta^{*, s}(\Phi, x)$ of $\Phi$ at $x$ to be 

\begin{align}
& \theta_{*}^{s}(\Phi, x) = \liminf_{r \to 0} \frac{\Phi(B(x,r))}{\omega_s r^{s}} \nonumber\\
& \theta^{*,s}(\Phi, x) = \limsup_{r \to 0} \frac{\Phi(B(x,r))}{\omega_s r^{s}}.
\end{align}
If the limsup and the liminf coincide, we call their common value the density of $\Phi$ at $x$ and denote it by $\theta^{s}(\Phi,x)$.

\end{definition}

\subsection{The area and co-area formula}
We will need the two following theorems in Section 3 of the paper. The co-area formula will allow us to decompose a conical uniform measure into a spherical and a radial component. 
As for the area formula, it will be used to compute the measure of a ball by the spherical component.

\begin{theorem}[{\cite{S}}][The area formula]
Let $f: \RR^{m} \rightarrow \RR^{d}$ be a 1-1 $C^{1}$ function where $m<d$. Then, for any  Borel set $A \subset \RR^{m} $,we have:
\begin{equation}\label{area}
\int_{A} Jf(x) d\mathcal{L}^{m}(x)= \mathcal{H}^{m}(f(A))
\end{equation}
where \begin{equation}
Jf(x)= \sqrt{det((df(x))^{*}  \circ df(x)) },
\end{equation}
and $(df(x))^{*}$ is the adjoint of $df(x)$.
\end{theorem}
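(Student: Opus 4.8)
The plan is to reduce the statement to the case of an injective \emph{linear} map and then patch together local affine approximations of $f$, keeping careful track of how $\mathcal{H}^m$ transforms under bi-Lipschitz maps. By countable additivity of both sides we may assume $A$ is bounded.

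First I would treat the linear case: if $L:\RR^m\to\RR^d$ is injective and linear, write its polar decomposition $L=O\circ S$, where $S=(L^*\circ L)^{1/2}$ is a symmetric positive-definite operator on $\RR^m$ and $O:\RR^m\to\RR^d$ is a linear isometry onto its image. Since $m$-dimensional Hausdorff measure is invariant under isometries, and since $\mathcal{H}^m=\mathcal{L}^m$ on $\RR^m$ (this is where the isodiametric inequality enters, together with the normalization by $\omega_m$), one gets $\mathcal{H}^m(L(A))=\mathcal{H}^m(S(A))=\det(S)\,\mathcal{L}^m(A)=\sqrt{\det(L^*\circ L)}\,\mathcal{L}^m(A)$, which is exactly $\int_A JL\, d\mathcal{L}^m$.

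Next I would dispose of the critical set $Z=\{x\in A:\,Jf(x)=0\}$, i.e.\ the set where $df(x)$ fails to be injective. On $Z$ the integrand of the left-hand side vanishes, so it suffices to prove $\mathcal{H}^m(f(Z))=0$ and then establish the formula on $A\setminus Z$. For the first point: cover $Z$ by small cubes; on each cube $df$ is nearly constant (continuity of $df$) and its image is nearly contained in an $(m-1)$-plane, so $f$ maps each such cube into a thin slab whose $\mathcal{H}^m$-content is controlled; summing the estimates and refining the mesh gives $\mathcal{H}^m(f(Z))=0$. On $A\setminus Z$ we now have $Jf>0$. Fix $\epsilon>0$. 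For each $x\in A\setminus Z$, differentiability provides a radius on which $f$ is trapped between the dilations $(1\pm\epsilon)$ of the affine map $y\mapsto f(x)+df(x)(y-x)$, in the sense that $(1-\epsilon)\lvert df(x)(y-z)\rvert\le \lvert f(y)-f(z)\rvert\le(1+\epsilon)\lvert df(x)(y-z)\rvert$ for $y,z$ in that ball. Using a countable dense family of linear maps, rational radii, and the Lindel\"of property, I would decompose $A\setminus Z$ into countably many disjoint Borel sets $A_k$, each of small diameter, on each of which $f$ satisfies such bi-Lipschitz bounds relative to a fixed injective linear $L_k$, with moreover $(1-\epsilon)\,JL_k\le Jf\le(1+\epsilon)\,JL_k$ on $A_k$ (shrink further so that $Jf$ is nearly constant).

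Finally I would combine these ingredients. A map that is bi-Lipschitz with constants in $[1-\epsilon,\,1+\epsilon]$ distorts $\mathcal{H}^m$ by at most the $m$-th powers of those constants; applying this to the comparison between $f$ and $L_k$ on $A_k$ and invoking the linear case gives $(1-\epsilon)^{2m}\int_{A_k}Jf\le \mathcal{H}^m(f(A_k))\le(1+\epsilon)^{2m}\int_{A_k}Jf$. Because $f$ is $1$-$1$, the images $f(A_k)$ are pairwise disjoint, so summing over $k$ and using countable additivity yields $(1-\epsilon)^{2m}\int_A Jf\le \mathcal{H}^m(f(A))\le(1+\epsilon)^{2m}\int_A Jf$; letting $\epsilon\to 0$ completes the proof. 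The main obstacle is the localization step: producing the countable Borel decomposition with \emph{uniform} affine control on each piece, and making the measure-zero argument for the critical set precise — that is where the genuine work lies, the linear case and the summation being essentially bookkeeping once $\mathcal{H}^m=\mathcal{L}^m$ on $\RR^m$ is in hand.
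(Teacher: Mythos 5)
The paper does not prove this theorem itself; it is quoted directly from Simon's lecture notes \cite{S}. Your sketch reproduces the standard textbook argument (found in Simon, Federer, or Evans--Gariepy): reduce the linear case to the identity $\mathcal{H}^m=\mathcal{L}^m$ on $\RR^m$ via polar decomposition, show the critical set $\{Jf=0\}$ has $\mathcal{H}^m$-null image by a covering/Sard-type estimate, decompose the noncritical set into countably many pieces on which $f$ is $(1\pm\epsilon)$-bi-Lipschitz comparable to a fixed injective linear map, and sum using injectivity of $f$. The outline is correct. Two small points worth making precise if you write it out: the two-sided bound $(1-\epsilon)\lvert df(x)(y-z)\rvert\le\lvert f(y)-f(z)\rvert\le(1+\epsilon)\lvert df(x)(y-z)\rvert$ for arbitrary pairs $y,z$ in a small ball uses $C^1$-continuity of $df$ together with injectivity of $df(x)$ (hence a lower bound $\lvert df(x)v\rvert\ge c\lvert v\rvert$), not merely differentiability at the single point $x$; and the combined distortion constant you get is $\tfrac{(1\pm\epsilon)^m}{1\mp\epsilon}$ rather than $(1\pm\epsilon)^{2m}$, but both families tend to $1$ as $\epsilon\to 0$, so this does not affect the conclusion.
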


The co-area formula can be viewed as a more general form of Fubini's theorem. To state it, we first need to define a notion of gradients for Lipschitz functions whose domain is a rectifiable set.

Let $M$ be an $n$-rectifiable set in $\RR^{d}$ (in particular $n \leq d$) that is, $M$ can be written as a countable union of $C^{1}$ manifolds $\left\lbrace N_i \right\rbrace $ up to a set of $\mathcal{H}^n$-measure zero. 
Let  $f: M \rightarrow \RR^{m}$ with $f=(f_1, \ldots, f_m)$ be a Lipschitz function. Then by Rademacher's theorem, $f$ is almost everywhere differentiable (the same holds for each $f_l$).
With this in mind, for $x \in N_j$ for some $j$ (in particular this is true for $\mathcal{H}^{n}$ almost every $x \in M$), $T_{x}M = T_{x}N_{j}$ the tangent plane at $x$ as a point of $N_j$. At $\mathcal{H}^n$-almost every point of $M$, we can define the gradient $\nabla^{M} f_{l} = \nabla^{N_j}f_{l}$ of $f_{l}$  and the linear map $d^{M}f(x): T_{x}M \rightarrow \RR^{m}$ in the following way:
\begin{equation}
 d^{M}f(x)(\tau)= \sum_{j=1}^{m} \left\langle \tau , \nabla^{M}f_{j}(x) \right\rangle e_{j},
\end{equation}
where $\left\lbrace e_j \right\rbrace$ is an orthonormal basis of $\RR^{m}$.

\begin{theorem}[\cite{S}][The co-area formula]
Let $M \subset \RR^d$ be an $n$-rectifiable set and $f: M \rightarrow \RR^{m}$, $m < n \leq d$  a Lipschitz function. Then for any non-negative Borel function $g:M \rightarrow \RR$, we have:
\begin{equation}\label{coarea}
\int_{M} g(x) J_{M}^{*}f (x)d\mathcal{H}^{n} (x) = \int_{\RR^{m}} \int_{f^{-1}(y) \cap M} g(z) d\mathcal{H}^{n-m}(z) d\mathcal{L}^{m}(y),
\end{equation}
where 
\begin{equation}
 J_{M}^{*}f(x)= \sqrt{det(d^{M}f(x) \circ (d^{M}f(x))^{*} ) }.
\end{equation}
\end{theorem}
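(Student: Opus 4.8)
This is the classical co-area formula, which I would establish by the standard reduction from rectifiable sets to Euclidean domains and ultimately to linear maps, where it becomes Fubini's theorem. First, both sides of \eqref{coarea} are non-negative, monotone, and countably additive as functions of the measurable $g\ge 0$ (for the right-hand side one checks that $y \mapsto \int_{f^{-1}(y)\cap M} g\, d\mathcal{H}^{n-m}$ is $\mathcal{L}^m$-measurable, using that $\mathcal{H}^{n-m}$-a.e.\ fiber is $(n-m)$-rectifiable), so by monotone convergence it suffices to treat $g=\chi_A$, i.e.\ to prove for every Borel $A\subseteq M$ that
\[
\int_{A} J_{M}^{*}f \, d\mathcal{H}^{n} \;=\; \int_{\RR^{m}} \mathcal{H}^{n-m}\big(f^{-1}(y)\cap A\big)\, d\mathcal{L}^{m}(y).
\]
Next, writing $M = M_0 \cup \bigcup_{i\ge 1} M_i$ with $\mathcal{H}^n(M_0)=0$ and each $M_i$ a Borel piece of a $C^1$ $n$-manifold, further decomposed so that $M_i = \psi_i(\Omega_i)$ for a bi-Lipschitz $C^1$ chart $\psi_i$ on a Borel set $\Omega_i\subseteq\RR^n$: the null set $M_0$ contributes nothing to either side (for the right-hand side this uses the co-area \emph{inequality} below), and both sides are additive over the $M_i$. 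Pulling the identity on $M_i$ back through $\psi_i$ and converting $\mathcal{H}^n$ and $\mathcal{H}^{n-m}$ into weighted Lebesgue integrals via the area formula \eqref{area} reduces everything to: for a Lipschitz $f:\RR^n\to\RR^m$ and a Borel $A\subseteq\RR^n$,
\[
\int_{A} J^{*}f \, d\mathcal{L}^{n} \;=\; \int_{\RR^{m}} \mathcal{H}^{n-m}\big(f^{-1}(y)\cap A\big)\, d\mathcal{L}^{m}(y),
\qquad J^{*}f = \sqrt{\det\big(Df\,(Df)^{*}\big)}.
\]

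By Rademacher's theorem $Df$ exists $\mathcal{L}^n$-a.e.; set $Z = \{x : \mathrm{rank}\,Df(x) < m\}$. On $Z$ we have $J^{*}f=0$, so the left side over $A\cap Z$ vanishes, and I must show the right side does too. The tool is the co-area inequality
\[
\int^{*}_{\RR^{m}} \mathcal{H}^{n-m}\big(f^{-1}(y)\cap E\big)\, d\mathcal{L}^{m}(y) \;\le\; C_{n,m}\,(\mathrm{Lip}\,f)^{m}\, \mathcal{H}^{n}(E),
\]
proved by a Vitali covering of $E$ by small balls (each of radius $r$ mapped into a ball of radius $(\mathrm{Lip}\,f)r$, which bounds $\mathcal{H}^{n-m}_{\delta}$ of the fibers of the cover), integrating in $y$ and letting the mesh tend to $0$. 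Running the same argument on the sets $Z\cap\{|Df|\le t\}$, foliated by the level sets of a well-chosen linear projection, upgrades this to $\int^{*}\mathcal{H}^{n-m}(f^{-1}(y)\cap Z)\,d\mathcal{L}^{m}(y)\le C\int_Z J^{*}f\,d\mathcal{L}^n = 0$. So it remains to prove the identity on $R:=A\setminus Z$, where $Df$ has full rank $m$.

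On $R$, decompose into countably many Borel pieces $R_k$ so small that there are a fixed linear map $L_k$ with $\|Df - L_k\|<\varepsilon$ on $R_k$ and a fixed orthogonal projection $p_k$ onto an $(n-m)$-plane complementary to $\ker L_k$ for which $g_k := (f,p_k)$ is bi-Lipschitz from $R_k$ onto its image with constants within $1\pm C\varepsilon$ of those of its linearization. For such a $g_k$, each fiber $f^{-1}(y)\cap R_k$ is carried by $g_k$ into the slice $\{y\}\times\RR^{n-m}$, so Fubini in $\RR^m\times\RR^{n-m}$ together with the area formula applied to $g_k$ expresses $\int_{\RR^m}\mathcal{H}^{n-m}(f^{-1}(y)\cap R_k)\,d\mathcal{L}^m(y)$ as $\int_{R_k} w_k\,d\mathcal{L}^n$ for a weight $w_k$; the linear-algebra identity relating $\det\big(Dg_k\,(Dg_k)^{*}\big)$ to $\det\big(Df\,(Df)^{*}\big)$ and the angle between $\ker Df$ and $\mathrm{range}\,p_k^{*}$ shows $w_k = J^{*}f\cdot(1+O(\varepsilon))$ on $R_k$. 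Summing over $k$ and sending $\varepsilon\to0$ along a refining sequence of decompositions gives the identity on $R$, and with the previous step this proves \eqref{coarea}.

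\textbf{Main obstacle.} The genuinely delicate parts are the co-area inequality and, above all, its refinement showing that the critical set $Z$ contributes nothing to the right-hand side --- the Lipschitz analogue of Sard's theorem, where one must track the exact powers of $\mathrm{Lip}\,f$ through the covering estimate --- together with the simultaneous construction of the decomposition $\{R_k\}$ in the last step, which must make both $Df$ nearly constant and the auxiliary map $g_k$ quantitatively bi-Lipschitz so that the Jacobian comparison is uniform and the errors vanish in the limit. The $\mathcal{L}^m$-measurability of $y\mapsto\mathcal{H}^{n-m}(f^{-1}(y)\cap A)$ is a further standard technicality, handled via the rectifiability of $\mathcal{H}^{n-m}$-a.e.\ fiber or by a direct Carath\'eodory-construction argument.
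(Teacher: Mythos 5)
The paper does not prove this theorem; it is quoted from Simon's lecture notes $\cite{S}$ and used as a black box (specifically to derive the polar decomposition in Theorem~\ref{polar}). So there is no paper proof to compare against.

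Your sketch is nevertheless a correct outline of the classical argument as it appears in Simon and in Federer. The reduction chain---monotone convergence to reduce to $g=\chi_A$; splitting $M$ into an $\mathcal{H}^n$-null set plus bi-Lipschitz pieces of $C^1$ manifolds; pulling back through charts and the area formula to reduce to a Lipschitz $f:\RR^n\to\RR^m$; isolating the critical set $Z$ where $J^*f=0$; using the co-area inequality and its Lipschitz--Sard refinement to kill the $Z$-contribution on the right-hand side; and finally treating the full-rank locus by approximating $Df$ by linear maps, introducing the auxiliary bi-Lipschitz map $g_k=(f,p_k)$, and applying Fubini together with the area formula---is exactly the standard strategy. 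You also correctly identify the genuinely delicate points (measurability of $y\mapsto\mathcal{H}^{n-m}(f^{-1}(y)\cap A)$, the co-area inequality, the Sard-type estimate on $Z$, and the quantitative comparison of $\det(Dg_k\,(Dg_k)^{*})$ with $J^{*}f$). A fully rigorous treatment would need to carry out the Vitali covering estimate and the linear-algebra Jacobian identity in detail, but since the paper rightly cites this as a known result rather than reproving it, your outline is adequate and faithful to the cited source.
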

\subsection{Weak convergence of measures and metrization of the space of Radon measures}
When studying the convergence of Radon measures, it is often very useful to metrize the space of Radon measures. We start by defining the notion of the support of a measure.
\begin{definition}
Let $\mu$ be a measure in $\RR^{d}$. We define the support of $\mu$ to be 
\begin{equation}
\mbox{supp}(\mu)=\left\lbrace x \in \RR^{d} ; \mu(B(x,r))>0, \mbox{ for all } r>0 \right\rbrace. 
\end{equation}
Note that the support of a measure is a closed subset of $\RR^{d}$.
\end{definition}
We can define weak convergence for a sequence of Radon measures.
\begin{definition}
Let $\Phi$, $\Phi_j$, $j>0$ be Radon measures in $\RR^{d}$.
We say that $\Phi_j$ converges weakly to $\Phi$ if for every $f \in C_{c}(\RR^{d})$, the following holds:
\begin{equation}
\int f(z) d\Phi_j (z) \to \int f(z) d\Phi(z).
\end{equation} 
We denote it by $ \Phi_j \rightharpoonup \Phi$.
\end{definition}
The results in this section appear in this form in $\cite{M}$.
\begin{theorem}
Let $\Phi_j$ be a sequence of Radon measures on $\RR^{d}$.Then $\Phi_j \rightharpoonup \Phi$, if and only if for any $K$ compact subset of $\RR^{d}$ and any $G$ open subset of $\RR^{d}$ the following hold:
\begin{enumerate}
\item $\Phi(K) \geq \limsup \Phi_{j}(K).$
\item $\Phi(G) \leq \liminf \Phi_{j}(G).$
\end{enumerate}

\end{theorem}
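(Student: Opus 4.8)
The statement is the familiar Portmanteau characterization of weak (vague) convergence of Radon measures, so the plan is to derive the two implications directly from the regularity of Radon measures together with Urysohn's lemma. First I would record the two standing facts used throughout: a Radon measure $\Psi$ on $\RR^{d}$ is finite on compact sets, is inner regular on open sets, $\Psi(G)=\sup\{\Psi(K):K\subset G\text{ compact}\}$, and is outer regular on compact sets, $\Psi(K)=\inf\{\Psi(G):G\supset K\text{ open},\ \overline{G}\text{ compact}\}$.

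For the \emph{forward} implication, assume $\Phi_j\rightharpoonup\Phi$. To get (2), fix an open $G$; for every $f\in C_{c}(\RR^{d})$ with $0\le f\le 1$ and $\supp f\subset G$ the pointwise bound $\mathbf 1_{G}\ge f$ gives $\Phi_j(G)\ge\int f\,d\Phi_j$, and letting $j\to\infty$ yields $\liminf_j\Phi_j(G)\ge\int f\,d\Phi$; since, by Urysohn's lemma, the supremum of $\int f\,d\Phi$ over such $f$ equals $\Phi(G)$, conclusion (2) follows. To get (1), fix a compact $K$; for any open $G\supset K$ with $\overline G$ compact, choose (Urysohn again) $f\in C_{c}(\RR^{d})$ with $f\equiv 1$ on $K$, $0\le f\le 1$, $\supp f\subset G$, so that $\Phi_j(K)\le\int f\,d\Phi_j\to\int f\,d\Phi\le\Phi(G)$; infimizing over $G$ gives $\limsup_j\Phi_j(K)\le\Phi(K)$.

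For the \emph{reverse} implication, assume (1) and (2). Splitting $f=f^{+}-f^{-}$ reduces the problem to $f\ge 0$, and there I would invoke the layer-cake identity $\int f\,d\Psi=\int_{0}^{\infty}\Psi(\{f>t\})\,dt$. Because $\supp f$ is compact, $\{f>t\}$ is open, $\{f\ge t\}$ is compact, and $\Phi(\{f>t\})=\Phi(\{f\ge t\})$ for all but countably many $t$. Fatou's lemma together with (2) gives $\liminf_j\int f\,d\Phi_j\ge\int_{0}^{\infty}\Phi(\{f>t\})\,dt=\int f\,d\Phi$. For the reverse inequality I would fix a closed ball $\overline B$ with $\supp f\subset\overline B$; applying (1) to $\overline B$ shows $\Phi_j(\overline B)\le\Phi(\overline B)+1$ for all large $j$, hence $M:=\sup_j\Phi_j(\overline B)<\infty$, so the functions $t\mapsto\Phi_j(\{f>t\})$ are dominated by the integrable majorant $M\,\mathbf 1_{(0,\|f\|_{\infty}]}$. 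The reverse Fatou lemma and (1) then yield $\limsup_j\int f\,d\Phi_j\le\int_{0}^{\infty}\Phi(\{f\ge t\})\,dt=\int f\,d\Phi$, and combining the two bounds proves $\int f\,d\Phi_j\to\int f\,d\Phi$.

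I do not expect a genuine obstacle here, since this is a textbook result (it is precisely the form appearing in \cite{M}); the only point needing a moment's care is the reverse Fatou step, where one must produce a $j$-uniform integrable majorant for $\Phi_j(\{f>t\})$ — and this is exactly what conclusion (1) supplies once it is applied to a single compact ball containing $\supp f$. As an alternative to the distribution-function computation, one could instead approximate $f$ uniformly by finite non-negative linear combinations of indicators of its open super-level sets and pass to the limit using (1) and (2), but the layer-cake route is the shorter one.
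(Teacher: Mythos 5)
The paper does not actually prove this statement: it appears in Section 2 under the remark that ``The results in this section appear in this form in \cite{M}'' and is quoted from Mattila's book without argument. Your proof is the standard Portmanteau argument and is correct: Urysohn plus inner regularity of the Radon measure on open sets gives $\Phi(G)=\sup\{\int f\,d\Phi : f\in C_c,\ 0\le f\le 1,\ \supp f\subset G\}$, which yields (2); Urysohn plus outer regularity on compacts yields (1); and for the converse, the layer-cake identity, ordinary Fatou with (2) applied to the open super-level sets $\{f>t\}$, and reverse Fatou with (1) applied to the compact level sets $\{f\ge t\}$ (the majorant coming from applying (1) once to a fixed closed ball containing $\supp f$) close the loop, with the two integrals $\int_0^\infty\Phi(\{f>t\})\,dt$ and $\int_0^\infty\Phi(\{f\ge t\})\,dt$ agreeing because $\Phi(\{f=t\})>0$ for at most countably many $t$. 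There is nothing to compare against in the paper itself, but your write-up fills in the omitted reference cleanly and without gaps.
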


\begin{theorem}\label{weakconv}
Let $\Phi_j$ be a sequence of Radon measures on $\RR^{d}$ such that
$$ \sup_{j}(\Phi_j(K)) < \infty ,$$
for all compact sets $K \subset \RR^{d}$. Then there is a weakly convergent subsequence of $\Phi_j$.
\end{theorem}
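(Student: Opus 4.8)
The plan is to use a standard compactness argument via a diagonal extraction over a countable dense family of test functions, followed by a Riesz-representation step to recover the limiting Radon measure. First I would fix an exhaustion of $\RR^d$ by balls $B_N = B(0,N)$, $N \in \NN$, and note that by hypothesis $M_N := \sup_j \Phi_j(\overline{B_{N+1}}) < \infty$ for each $N$. The key preliminary observation is that $C_c(\RR^d)$, equipped with the topology of uniform convergence on compact sets, is separable: one can choose a countable set $\{f_i\}_{i \in \NN} \subset C_c(\RR^d)$ that is dense in the sense that for every $f \in C_c(\RR^d)$ with $\supp f \subset B_N$ and every $\epsilon > 0$ there is an $f_i$ with $\supp f_i \subset B_{N+1}$ and $\|f - f_i\|_\infty < \epsilon$ (for instance, finite rational-coefficient combinations of a countable partition-of-unity-type family of bump functions adapted to the $B_N$).

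Next I would run the diagonal argument. For each fixed $i$, the scalar sequence $\left( \int f_i \, d\Phi_j \right)_j$ is bounded, since $\left| \int f_i \, d\Phi_j \right| \le \|f_i\|_\infty \, \Phi_j(\supp f_i) \le \|f_i\|_\infty M_{N_i}$ where $\supp f_i \subset \overline{B_{N_i+1}}$. So by Bolzano--Weierstrass and a Cantor diagonalization over $i \in \NN$, I extract a single subsequence (not relabeled) along which $L(f_i) := \lim_j \int f_i \, d\Phi_j$ exists for every $i$. The density of $\{f_i\}$ together with the uniform local mass bounds then upgrades this to convergence for \emph{all} $f \in C_c(\RR^d)$: given $f$ with $\supp f \subset B_N$ and $\epsilon > 0$, pick $f_i$ as above; then for $j, k$ large, $\left| \int f\, d\Phi_j - \int f \, d\Phi_k \right| \le 2\epsilon\, M_N + \left| \int f_i \, d\Phi_j - \int f_i\, d\Phi_k \right|$, so the sequence $\left( \int f\, d\Phi_j \right)_j$ is Cauchy and hence convergent; call the limit $L(f)$. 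The functional $L$ is linear, and positive ($f \ge 0 \implies L(f) \ge 0$ since each $\int f \, d\Phi_j \ge 0$).

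Finally, I would invoke the Riesz representation theorem for positive linear functionals on $C_c(\RR^d)$: there is a unique Radon measure $\Phi$ with $L(f) = \int f \, d\Phi$ for all $f \in C_c(\RR^d)$. By construction $\int f \, d\Phi_j \to \int f\, d\Phi$ for every $f \in C_c(\RR^d)$, i.e. $\Phi_j \rightharpoonup \Phi$ along the extracted subsequence, which is the claim. I do not expect any genuine obstacle here — this is the classical sequential Banach--Alaoglu / weak-$*$ compactness result for measures — the only points requiring a modicum of care are the separability statement for $C_c(\RR^d)$ in the compact-open topology and checking that the local mass bounds are exactly what is needed to pass from a dense family to all test functions; both are routine. (Alternatively, one could cite this directly from \cite{M}, but the self-contained argument above is short enough to include.)
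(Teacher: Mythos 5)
Your proof is correct and is exactly the standard diagonalization-plus-Riesz argument; the paper does not reprove this statement but simply cites it from Mattila's book (Theorem 1.23 in \cite{M}), where essentially your argument appears verbatim. No gaps: the local mass bounds $M_N$ are used in the right places, the separability of $C_c(\RR^d)$ with controlled supports is routine, positivity of the limiting functional $L$ is clear, and Riesz representation supplies the limit Radon measure.
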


We now want to define a metric on the space of Radon measures.

\begin{definition}\label{L(r)}
Let $0<r<\infty$. We denote by $\mathcal{L}(r)$ the set of all non-negative Lipschitz functions $f$ on $\RR^{d}$ with $spt(f) \subset B(r)$ and with $Lip(f) \leq 1$. For Radon measures $\Phi$ and $\Psi$ on $\RR^d$, set
$$F_{r}(\Phi, \Psi)= \sup\left\lbrace \left| \int f d\Phi - \int f d\Psi \right| : f \in \mathcal{L}(r) \right\rbrace.$$
We also define $\mathcal{F}$ to be 
$$\mathcal{F}(\Phi, \Psi) = \sum_{k} 2^{-k} F_{k}(\Phi,\Psi). $$ 
It is easily seen that $F_r$ satisfies the triangle inequality for each $r>0$ and that $\mathcal{F}$ is a metric.
\end{definition}

\begin{proposition}
Let $\Phi$, $\Phi_{k}$ be Radon measures on $\RR^{d}$. Then the following are equivalent:
\begin{enumerate}
\item $\Phi_j \rightharpoonup \Phi.$
\item $\lim \mathcal{F}(\Phi_j, \Phi) \to 0$
\item For all $r>0$, $\lim_{j \to \infty} F_{r}(\Phi_{j}, \Phi)=0.$

\end{enumerate}

\end{proposition}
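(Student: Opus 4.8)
The plan is to prove the equivalence via the two substantive implications $(3)\Rightarrow(1)$ and $(1)\Rightarrow(3)$; granting these, $(2)\Leftrightarrow(3)$ is a routine manipulation of the series defining $\mathcal{F}$. The fact used in every step is that each of the three conditions forces a locally uniform mass bound, $\sup_j\Phi_j(\overline{B(R)})<\infty$ for every $R>0$. If $(1)$ holds, test weak convergence against any $\psi\in C_c(\RR^d)$ with $0\le\psi\le1$ and $\psi\equiv1$ on $\overline{B(R)}$: then $\limsup_j\Phi_j(\overline{B(R)})\le\int\psi\,d\Phi<\infty$, and since each $\Phi_j$ is Radon the full supremum is finite. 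If $(3)$ holds, test $F_{R+2}$ against a $1$-Lipschitz cutoff $h\in\mathcal{L}(R+2)$ equal to $1$ on $\overline{B(R)}$: then $\Phi_j(\overline{B(R)})\le\int h\,d\Phi_j\to\int h\,d\Phi<\infty$, and again the supremum is finite. I use these bounds below without further comment.

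For $(3)\Rightarrow(1)$, fix $f\in C_c(\RR^d)$ with $\mathrm{spt}(f)\subset B(R)$ and let $\eta>0$. Mollification gives a Lipschitz $g\in C_c^\infty(\RR^d)$ with $\|f-g\|_\infty<\eta$ and $\mathrm{spt}(g)\subset B(R+1)$; writing $g=g_+-g_-$ with $g_\pm=\max(\pm g,0)$, each $g_\pm\ge0$ is Lipschitz with $\mathrm{Lip}(g_\pm)\le L:=\mathrm{Lip}(g)$ and is supported in $B(R+1)$, so $g_\pm/L\in\mathcal{L}(R+1)$ when $L>0$ (the case $L=0$ gives $g\equiv0$, which is trivial). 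Therefore
$$\Big|\int f\,d\Phi_j-\int f\,d\Phi\Big|\le\eta\big(\Phi_j(B(R+1))+\Phi(B(R+1))\big)+2L\,F_{R+1}(\Phi_j,\Phi).$$
Letting $j\to\infty$ the last term vanishes by $(3)$ while the first is $\eta$ times a bound independent of $j$; sending $\eta\downarrow0$ gives $\int f\,d\Phi_j\to\int f\,d\Phi$, i.e. $(1)$.

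The implication $(1)\Rightarrow(3)$ is the crux, and the step I expect to be the main obstacle: one must upgrade convergence of $\int f\,d\Phi_j$ for each \emph{fixed} $f$ to convergence \emph{uniform over the whole family} $\mathcal{L}(r)$. Compactness supplies the upgrade. Since every $f\in\mathcal{L}(r)$ obeys $0\le f\le r$ and $\mathrm{Lip}(f)\le1$, the family $\mathcal{L}(r)$ is precompact in $C(\overline{B(r)})$ by the Arzel\`a--Ascoli theorem. If $(3)$ failed for some $r$, there would be $\delta>0$, indices $j_k\to\infty$, and $f_k\in\mathcal{L}(r)$ with $\big|\int f_k\,d\Phi_{j_k}-\int f_k\,d\Phi\big|\ge\delta$; passing to a subsequence, $f_k\to f$ uniformly for some $f\in C_c(\RR^d)$ with $\mathrm{spt}(f)\subset\overline{B(r)}$. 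Then
$$\Big|\int f_k\,d\Phi_{j_k}-\int f_k\,d\Phi\Big|\le\|f_k-f\|_\infty\Big(\sup_i\Phi_i(\overline{B(r)})+\Phi(\overline{B(r)})\Big)+\Big|\int f\,d\Phi_{j_k}-\int f\,d\Phi\Big|,$$
and both terms on the right tend to $0$ — the first by the uniform mass bound, the second by $(1)$ applied to the single function $f$ — contradicting the lower bound $\delta$. Hence $(3)$ holds.

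Finally $(2)\Leftrightarrow(3)$ is bookkeeping with $\mathcal{F}=\sum_k2^{-k}F_k$. For $(3)\Rightarrow(2)$, split $\mathcal{F}(\Phi_j,\Phi)$ into $\sum_{k\le K}2^{-k}F_k(\Phi_j,\Phi)$ and the tail $\sum_{k>K}2^{-k}F_k(\Phi_j,\Phi)$: the head tends to $0$ as $j\to\infty$ by $(3)$ applied to $r=1,\dots,K$, while the tail is estimated by $F_k(\Phi_j,\Phi)\le k\big(\Phi_j(\overline{B(k)})+\Phi(\overline{B(k)})\big)$ and the uniform mass bounds, hence made $<\varepsilon$ by choosing $K$ large independently of $j$; letting $j\to\infty$ and then $\varepsilon\downarrow0$ yields $\mathcal{F}(\Phi_j,\Phi)\to0$. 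For $(2)\Rightarrow(3)$, all terms of the series are nonnegative, so $2^{-k}F_k(\Phi_j,\Phi)\le\mathcal{F}(\Phi_j,\Phi)\to0$ for every integer $k$, and for an arbitrary radius $r>0$ the inclusion $\mathcal{L}(r)\subset\mathcal{L}(\lceil r\rceil)$ gives $F_r(\Phi_j,\Phi)\le F_{\lceil r\rceil}(\Phi_j,\Phi)\to0$.
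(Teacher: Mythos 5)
The paper does not prove this proposition; it cites Mattila \cite{M}. Your treatment of $(1)\Leftrightarrow(3)$ and $(2)\Rightarrow(3)$ is correct and follows the standard route: a uniform local mass bound obtained by testing against a cutoff, mollification plus a positive/negative splitting for $(3)\Rightarrow(1)$, and an Arzel\`a--Ascoli compactness argument to upgrade fixed-function convergence to the $\sup$ over $\mathcal{L}(r)$ for $(1)\Rightarrow(3)$. One should also observe that the uniform limit $f$ of $f_k\in\mathcal{L}(r)$, though perhaps not itself in $\mathcal{L}(r)$, is in $C_c(\RR^d)$ (it vanishes on $\partial B(r)$ and extends by zero), which is all that is needed to invoke $(1)$; your argument uses this correctly.

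The step $(3)\Rightarrow(2)$, however, has a genuine gap with the definition of $\mathcal{F}$ as written in Definition 2.8 of the paper. You bound the tail by $\sum_{k>K}2^{-k}\,k\,\bigl(\sup_j\Phi_j(\overline{B(k)})+\Phi(\overline{B(k)})\bigr)$ and assert it can be made small uniformly in $j$ by taking $K$ large. But the uniform mass bound only says $M_k:=\sup_j\Phi_j(\overline{B(k)})<\infty$ for each fixed $k$; nothing controls the growth of $M_k$ in $k$, and $\sum_k 2^{-k}kM_k$ may diverge. In fact, with the definition $\mathcal{F}=\sum_k 2^{-k}F_k$ taken literally, $(3)\Rightarrow(2)$ is false: with $\Phi=0$ and $\Phi_j=j\,2^{2j}\,\delta_{je_1}$ one has $\Phi_j\rightharpoonup 0$ and $F_r(\Phi_j,0)=0$ for every $j\ge r$, yet $\mathcal{F}(\Phi_j,0)=\sum_{k>j}2^{-k}\,j\,2^{2j}(k-j)=2j\,2^{j}\to\infty$. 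The resolution is that the standard definition (as in \cite{M}) uses a normalized summand, e.g. $\mathcal{F}(\Phi,\Psi)=\sum_k 2^{-k}\min\{1,F_k(\Phi,\Psi)\}$, which supplies the summable dominant $\sum_k 2^{-k}$ and makes your tail estimate legitimate by dominated convergence. Your proof should either use that definition or flag that the tail control requires it; as written, the step would fail.
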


\subsection{Tangent Measures and Uniform measures}
Let $\mu$ be a Radon measure on $\RR^{d}$ and $\Sigma$ its support.
For $a \in \RR^{d}$, $r>0$, define $T_{a,r}$ to be the following homothety that blows up $B(a,r)$  to $B(0,1)$:
\begin{equation} 
T_{a,r}(x)=\frac{x-a}{r} . \nonumber
\end{equation}
We define the image $T_{a,r}[\mu] $ of $\mu$ under $T_{a,r}$ to be the following measure:
\begin{align*}
T_{a,r}[\mu](A) & =\mu(T_{a,r}^{-1}(A)),\\ &=\mu(rA+a), \text{  } A \subset {\RR}^{d}. 
\end{align*}

\begin{definition}[\cite{P}]\label{tangent*}
We say that $\nu$ is a tangent measure of $\mu$ at a point $x_0 \in \RR^d$ if $\nu$ is a non-zero Radon measure on $\RR^n$ and if there exist sequences $(r_i)$ and $(c_i)$ of positive numbers such that $r_i \downarrow 0$ and:
\begin{equation} \label{tangent'}
c_i T_{x_0,r_i}[\mu]\rightharpoonup \nu \text{ as } i \rightarrow \infty, 
\end{equation}
where the convergence in ($\ref{tangent'}$) is the weak convergence of measures.
We write $\nu \in \mbox{Tan}(\mu,x_0)$.
 \end{definition}

\begin{remark}
 By Remark 14.4.3 in $\cite{M}$, if 
 \begin{equation} \label{upperlowerdensity}
 0< \Theta_{*}^{n}(\mu,x_0) \leq \Theta^{* n}(\mu,x_0) <\infty,
 \end{equation}
 and if $\nu \in \text{Tan}(\mu,x_0)$, then we can choose ($r_i$) such that:
 \begin{equation} \label{tangent}
 {r_{i}}^{-n} T_{x_0,r_i}[\mu]\rightharpoonup c \nu \text{ as } i \rightarrow \infty,
 \end{equation}
for some $c>0$. In the setting of this paper, $\eqref{upperlowerdensity}$ will always hold and we will only use $\eqref{tangent}$ when talking about tangent measures.
\end{remark}
\begin{definition}\label{flat}
A measure  on $\RR^d$ is called $n$-flat if it is equal to $ c \mathcal{H}^{n} \res V$, where $V$ is an $n$-plane, and $0<c<\infty$.

Let $\mu$ be a Radon measure on $\RR^d$ and $x_0$  be a point in the support $\Sigma$ of $\mu$. We will call $x_0$ a flat (or regular) point of $\Sigma$ if there exists an $n$-plane $V$ such that
\begin{equation}\label{flatpoint}
\text{Tan}(\mu,x_0)= \left\lbrace c \mathcal{H}^{n} \res V \ ; \ c >0 \right\rbrace. 
\end{equation}

Any point of $\Sigma$ that is not flat will be called a singular (or non-flat) point.

\end{definition}

\begin{definition}
Let $\mu$ be a Radon measure in $\RR^d$.
\begin{itemize}
\item We say $\mu$ is uniformly distributed if there exists a positive function $\phi : \RR_{+} \rightarrow \RR_{+}$ such that:
$$
\mu(B(x,r))=\phi(r), \text{ for all } x \in \Sigma, r>0.$$
We call $\phi$ the distribution function of $\mu$.

\item If there exists $c>0$ such that $\phi(r)=c r^n$, we say $\mu$ is $n$-uniform.

\item If $\mu$ is an $n$-uniform measure such that $T_{0,r}[\mu] = r^{n} \mu $ for all $r>0$, we call it a conical $n$-uniform measure.

\end{itemize}

\end{definition}

In [$\cite{P}$, Theorem $3.11$], Preiss showed that if $\mu$ is an $n$-uniform measure, there exists a unique $n$-uniform measure $\lambda$ such that:
\begin{equation}
r^{-n} T_{x,r} [\mu] \rightharpoonup \lambda, \mbox{ as } r \to \infty, 
\end{equation}
for all $x \in \RR^{d}$. $\lambda$ is called the tangent measure of $\mu$ at $\infty$.

The following theorem describes a basic but essential property of uniformly distributed measures: how radial functions integrate against them.

\begin{theorem}\label{radial}
Let $\mu$ be a uniformly distributed measure on $\RR^d$ and $f$ be a  non-negative Borel function on $\RR_{+}$. For all $z , y \in \mbox{supp}(\mu)$, we have:
$$ \int f(|x-z|) d\mu(x) = \int f(|x-y|)d\mu(x).$$
\end{theorem}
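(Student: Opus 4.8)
The plan is to prove Theorem \ref{radial} by a ``Fubini--swap'' argument using the defining property of a uniformly distributed measure. Fix $z,y \in \supp(\mu)$. The key observation is that by the layer-cake (distribution function) formula, for any non-negative Borel $f$ on $\RR_+$ and any $w \in \supp(\mu)$ we can write $f(|x-w|)$ as an integral of indicator functions of balls centered at $w$, and then integrate in $x$ against $\mu$. Since the distribution function $\phi$ of $\mu$ does not depend on the center, the resulting expression will depend only on $f$ and $\phi$, not on $w$.

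First I would reduce to the case where $f = \mathbf{1}_{[0,R]}$ for some $R>0$, i.e.\ to the statement $\mu(B(z,R)) = \mu(B(y,R))$, which is immediate from $\mu(B(w,R)) = \phi(R)$. Then for general non-negative Borel $f$, I would approximate: it suffices by monotone convergence to treat $f$ a finite non-negative linear combination of indicators $\mathbf{1}_{[0,R_i]}$ (nonincreasing simple functions), and then pass to the monotone limit for general $f$ by the monotone class / simple-function approximation theorem, since every non-negative Borel function on $\RR_+$ is an increasing limit of such combinations. More cleanly, one can use the layer-cake identity
\begin{equation}\nonumber
\int f(|x-z|)\, d\mu(x) = \int_0^\infty \mu\bigl(\{ x : f(|x-z|) > t \}\bigr)\, dt,
\end{equation}
and then observe that for each fixed $t$, the superlevel set $\{ s \ge 0 : f(s) > t\}$ is a Borel subset $A_t \subset \RR_+$, so $\{x : f(|x-z|) > t\} = \{ x : |x-z| \in A_t \}$, whose $\mu$-measure I would compute by writing $A_t$ as a countable disjoint union of intervals and using additivity together with $\mu(B(z,R)) = \mu(B(y,R)) = \phi(R)$ (and the analogous identity for the measure of annuli $\mu(\{ R_1 \le |x-z| < R_2\}) = \phi(R_2) - \phi(R_1)$, valid once one checks $\mu(\partial B(w,R)) $ does not obstruct the argument, which it does not since we only ever compare the two centers and the intermediate quantities cancel). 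This shows $\mu(\{x : |x-z| \in A_t\}) = \mu(\{x : |x-y| \in A_t\})$ for every $t$, and integrating in $t$ gives the claim.

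The only mildly delicate point — and the one I'd treat as the main obstacle — is justifying that the $\mu$-measure of a set of the form $\{ x : |x-z| \in A \}$ for an arbitrary Borel $A \subset \RR_+$ is indeed a function of $A$ and $\phi$ alone, independent of $z$; a clean way is to note that the pushforward of $\mu$ under $x \mapsto |x-z|$ is a Borel measure on $\RR_+$ whose value on every interval $[0,R]$ is $\phi(R)$, hence (by uniqueness of measures agreeing on a generating $\pi$-system, here the intervals $[0,R]$) this pushforward is the \emph{same} measure on $\RR_+$ for every choice of center $z \in \supp(\mu)$. Once that is established, $\int f(|x-z|)\, d\mu(x) = \int_{\RR_+} f\, d(\text{pushforward})$ is manifestly independent of $z$, which is exactly the assertion. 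Everything else is routine measure theory; no geometry of the support is needed beyond the definition of uniform distribution.
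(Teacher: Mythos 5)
Your proposal is correct and follows essentially the same route as the paper: both reduce to indicators of balls via the layer-cake/Fubini identity, invoke $\mu(B(w,r))=\phi(r)$ for $w\in\mathrm{supp}(\mu)$, and then pass to general non-negative Borel $f$ by monotone approximation. Your pushforward-and-$\pi$-system phrasing is a marginally more careful way to justify the step the paper dispatches with ``density of step functions,'' but it is the same underlying argument.
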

\begin{proof}
This is a simple application of Fubini's theorem. Indeed, if $f= \alpha \chi_{I}$, where $\alpha\geq 0$ and $I=(c,d)$ is an interval
\begin{align*}
\int f(|x-z|) d\mu(x) &= \alpha \int_{0}^{1} \mu(\left\lbrace x; \chi_{I}(|x-z|) \geq t \right\rbrace) dt, 
                            \\&=\alpha \left( \mu(B(z,d) \cap {B(z,d)}^{c})\right) , 
                            \\&=\alpha \left( \mu(B(y,d)\cap {B(y,c)}^{c}) \right) , \mbox{ since } \mu(B(z,r)) = \mu(B(y,r)) \mbox{ for all }r
                            \\&=\int f(|x-y|) d\mu(x).
\end{align*}
The result follows for general non-negative Borel functions by linearity of the integral and density of step functions.
\end{proof}
In $\cite{P}$, Preiss introduced the following $k$-forms which were essential to understand the structure of uniform measures.

\begin{definition}[3.4.(1), \cite{P}]
For $\mu$ uniformly distributed measure in $\RR^d$, $s>0$ and $k \in \NN$, define the following symmetric $k$-linear form $b_{k,s} \in {\odot}^{k} {\RR}^d$:

\begin{equation} 
b_{k,s}(u_1 \odot \ldots \odot u_k)= (2s)^k {(I(s) k!)}^{-1} \int \langle z,u_1 \rangle \ldots \langle z,u_k \rangle e^{-s|z|^{2}} d\mu(z), 
\end{equation}
where $$ I(s)= \int e^{-s|z|^2} d\mu(z). $$
\end{definition}

We will quote a theorem by Preiss describing Taylor expansions of those forms, and two consequences of this expansion.

\begin{theorem}[3.6, \cite{P}] \label{moments}
Let $\mu$ be a uniformly distributed measure in $\RR^d$. 
\begin{enumerate}
\item There are symmetric forms $b_{k}^{(j)} \in {\odot} ^{k} \RR^d$ such that:
  \begin{enumerate}
  \item $\label{1a}$ $b_{k,s}= \sum_{j=1}^{q} s^{j} \frac{b_{k}^{(j)}}{j!} + o(s^q)$ as $s \downarrow 0$ for every $k=1,2,\ldots$ and every $q=1,2,\ldots$. 
  \item $b_{k}^{(i)}=0$ whenever $2i<k$.
  \item$\sum_{k=1}^{2q} b_{k}^{(q)}(x^k)= {|x|}^{2q}$ for every $q=1,2,\ldots$ and every $x \in \Sigma$.
  \end{enumerate}
  Moreover, the forms $b_{k}^{(j)}$ are uniquely determined by ($\ref{1a}$).
  \item There are symmetric forms ${{\hat{b}}_{k}^{(j)} \in \odot} ^{k} \RR^d$ such that:
  \begin{enumerate}
  \item $\label{2a}$ $s^{-k} b_{k,s}=\sum_{j=1}^{q} s^{-j} \frac{{\hat{b}}_{k}^{(j)}}{j!} + o(s^{-q})$ as $s \uparrow \infty$, for every $k=1,2,\ldots$ and every $q=1,2,\ldots$, and
  \item ${\hat{b}}_{k}^{(i)}=0$ whenever $k>2i$.
  \end{enumerate}
  Moreover, the forms ${\hat{b}}_{k}^{(j)}$ are determined uniquely by ($\ref{2a}$).
  \end{enumerate}
\end{theorem}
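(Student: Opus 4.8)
The plan is to reduce the statement to a single identity coming from uniform distribution, after which (1)(c) and the uniqueness are bookkeeping, and then to indicate how the asymptotic expansions arise. First I would apply Theorem~\ref{radial} with the radial profile $f(t)=e^{-st^2}$: for every $z$ in the support $\Sigma$ of $\mu$,
$$\int e^{-s|x-z|^2}\,d\mu(x)=\int e^{-s|x|^2}\,d\mu(x)=I(s),$$
where, as is standard for these forms, the origin is taken to lie in $\Sigma$. Since $\mu$ has polynomial volume growth --- covering $B(0,r)\cap\Sigma$ by a bounded number of balls $B(y_i,r/2)$ with $y_i\in\Sigma$ gives $\mu(B(0,r))\le Cr^{N}$ --- all Gaussian moments are finite, so I may expand $e^{-s|x-z|^2}=e^{-s|z|^2}e^{-s|x|^2}\sum_{k\ge0}\frac{(2s)^k}{k!}\langle x,z\rangle^k$ and integrate term by term. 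Recognising the coefficients as the forms $b_{k,s}$ rewrites the left side as $e^{-s|z|^2}I(s)\sum_{k\ge0}b_{k,s}(z^k)$; dividing by $I(s)$ and using $b_{0,s}\equiv1$ gives the master identity
$$\sum_{k\ge1}b_{k,s}(z^k)=e^{s|z|^2}-1=\sum_{q\ge1}\frac{s^q}{q!}\,|z|^{2q}\qquad(z\in\Sigma).$$

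Granting the expansions in (1)(a) and (2)(a) together with the vanishing patterns (1)(b), (2)(b), the identity (1)(c) is immediate: plug $b_{k,s}=\sum_{j\ge1}s^j b_k^{(j)}/j!+o(s^q)$ into the master identity and observe that, because $b_k^{(j)}=0$ once $k>2j$, only the $k\le 2q$ terms can contribute at order $s^q$; comparing coefficients of $s^q$ on the two sides then yields $\sum_{k=1}^{2q}b_k^{(q)}(x^k)=|x|^{2q}$ for all $x\in\Sigma$. Uniqueness of all the coefficient forms is automatic, since two asymptotic series agreeing to every order have identical coefficients.

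To produce the expansions themselves I would write $I(s)=\int_0^\infty e^{-sr^2}\,d\phi(r)$ with $\phi$ the radial distribution function; each $b_{k,s}$ is then smooth in $s$ on $(0,\infty)$ by differentiation under the integral, and the elementary bound $|b_{k,s}(u^k)|\le|u|^k(2s)^k(I(s)\,k!)^{-1}\int_0^\infty r^k e^{-sr^2}\,d\phi(r)$, using only crude two-sided estimates on $\phi$, already gives order bounds such as $b_{k,s}(u^k)=O(s^{k/2})$ as $s\downarrow0$ and $O(s^{\lfloor k/2\rfloor})$ as $s\uparrow\infty$. The genuinely hard part --- and the step I expect to be the main obstacle --- is upgrading these to the full term-by-term expansions with honest $o(s^{\pm q})$ remainders and the sharp vanishing pattern (for instance sharpening $O(s^{k/2})$ to $O(s^{\lceil k/2\rceil})$ for odd $k$, which relies on cancellation in the odd moments rather than on their size). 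Extracting all of this from the mere polynomial growth and local $n$-dimensionality of $\mu$ is precisely the delicate bootstrap between the moment formulas and the master identity carried out in \cite{P}; since the statement in question is exactly Theorem~3.6 there, I would cite it rather than reprove it, everything else above being a short consequence of Theorem~\ref{radial}.
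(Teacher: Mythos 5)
This theorem is quoted in the paper directly from Preiss [P, Theorem~3.6] with no proof given, so there is no internal argument to compare against; both you and the paper ultimately defer to Preiss. That said, the partial derivations you supply are correct and are in fact how the argument in [P] is organized. Applying Theorem~\ref{radial} to $f(t)=e^{-st^{2}}$ and expanding $e^{2s\langle x,z\rangle}$ does give, for $z\in\Sigma$ (and with the standing normalization $0\in\Sigma$ so that $I(s)$ really is the common value of $\int e^{-s|x-z|^{2}}\,d\mu(x)$ over $z\in\Sigma$),
\begin{equation*}
\sum_{k\ge 1} b_{k,s}(z^{k})=e^{s|z|^{2}}-1=\sum_{q\ge 1}\frac{s^{q}}{q!}\,|z|^{2q},
\end{equation*}
and your extraction of (1)(c) by matching the $s^{q}$ coefficient, using (1)(b) to truncate the $k$-sum at $k\le 2q$, is exactly right. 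Uniqueness of the $b_{k}^{(j)}$ (and of the $\hat b_{k}^{(j)}$) from the asymptotic expansions is likewise automatic. You are also right that the genuine content is (1)(a)--(1)(b) and (2)(a)--(2)(b): the term-by-term expansion with the $o(s^{\pm q})$ remainder and the vanishing pattern $b_{k}^{(i)}=0$ for $2i<k$ is a nontrivial bootstrap in [P] that does not follow from the crude size bound $b_{k,s}(u^{k})=O(s^{k/2})$, precisely because for odd $k$ the improvement to $O(s^{\lceil k/2\rceil})$ rests on cancellation in the odd moments. Citing [P] for that step, as the paper does, is the appropriate conclusion.
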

 
 If $\mu $ is assumed to be conical, one gets the following improvement on Theorem $\ref{radial}$ and Theorem $\ref{moments}$. 

\begin{theorem}[3.10,\cite{P}] \label{momentsconical}
Let $\mu$ be a uniformly distributed conical measure. Then there exists $n$ such that $\mu$ is $n$-uniform and:
\begin{itemize}
\item if $x \in \Sigma$ and $\lambda>0$, then $\lambda x \in \Sigma$, where $\Sigma$ is the support of $\mu$.
\item whenever $u \in \Sigma$, $e \in {\RR}^{n}$, $|u|=|e|$ and $f$ is a non-negative Borel function on ${\RR}^2$ then:
\begin{equation} \label{protopolar}
\int_{{\RR}^d} f({|z|^{2}}, {\langle z,u \rangle}) d\mu(z)= C \int_{{\RR}^{n}}f({|x|^2},{\langle x,e \rangle})d\mathcal{L}^{n}(x).
\end{equation}
\item For every $s>0$ and $k=1,2,\ldots$, we have
\begin{equation} \label{conicalevenodd} b_{2k-1,s}=0 \mbox{ and } b_{2k,s}=\frac{s^{k}}{k!} b_{2k}^{k}. \end{equation}
\item If $\Sigma$ denotes the support of $\mu$ we have: \begin{equation}\label{varietyconical} \Sigma \subset \bigcap_{k>0} \left\lbrace x \in {\RR}^d ;  b_{2k}^{k} (x^{2k})=|x|^{2k} \right\rbrace. 
\end{equation}
\end{itemize}
\end{theorem}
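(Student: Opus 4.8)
The plan is to prove, in this order, that $\mu$ is $n$-uniform, that $\Sigma$ is a cone, the moment relations \eqref{conicalevenodd} and the variety inclusion \eqref{varietyconical}, and finally the polar identity \eqref{protopolar}; the first three steps use only the dilation invariance of $\mu$ together with Theorem \ref{moments}, and \eqref{protopolar} is then deduced from \eqref{conicalevenodd} by an explicit Gaussian computation. \textbf{Step 1.} Because $\mu$ is conical there is a positive function $\psi$ with $T_{0,r}[\mu]=\psi(r)\mu$ for every $r>0$, and comparing supports forces $0\in\Sigma$ and $\tfrac1r\Sigma=\Sigma$; the latter gives $\lambda x\in\Sigma$ whenever $x\in\Sigma$, $\lambda>0$. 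For $x\in\Sigma$, $\rho>0$ we have $x/r\in\Sigma$ and $T_{0,r}[\mu]\big(B(x/r,\rho)\big)=\mu\big(B(x,r\rho)\big)$; writing $\phi$ for the distribution function of $\mu$ this yields $\phi(r\rho)=\psi(r)\phi(\rho)$, hence $\psi=\phi/\phi(1)$ (put $\rho=1$) and $g:=\phi/\phi(1)$ is a nondecreasing multiplicative function on $(0,\infty)$, so $g(r)=r^{n}$ for some $n\ge0$ and $\mu$ is $n$-uniform; $n\in\NN$ by Marstrand's theorem, which is what gives meaning to $\RR^{n}$ in \eqref{protopolar}. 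After renormalizing we may take $T_{0,r}[\mu]=r^{n}\mu$, equivalently $\int g(rz)\,d\mu(z)=r^{-n}\int g\,d\mu$ for all nonnegative Borel $g$.

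\textbf{Step 2.} Apply that change of variables to $g(z)=\langle z,v_{1}\rangle\cdots\langle z,v_{k}\rangle e^{-s|z|^{2}}$; it gives $\int\langle z,v_{1}\rangle\cdots\langle z,v_{k}\rangle e^{-sr^{2}|z|^{2}}\,d\mu(z)=r^{-n-k}\int\langle z,v_{1}\rangle\cdots\langle z,v_{k}\rangle e^{-s|z|^{2}}\,d\mu(z)$, so with $r=s^{-1/2}$ the integral equals $c_{k}(v_{1},\dots,v_{k})\,s^{-(n+k)/2}$, and likewise $I(s)=I(1)s^{-n/2}$. Plugging these into the definition of $b_{k,s}$ gives $b_{k,s}(v^{\odot k})=C(v)\,s^{k/2}$ with $C(v)$ independent of $s$, for every $v\in\RR^{d}$. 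Comparing with the integer-power asymptotic expansion $b_{k,s}=\sum_{j\ge1}s^{j}b_{k}^{(j)}/j!+o(s^{q})$ of Theorem \ref{moments}: for $k$ odd the power $k/2$ is not an integer, which forces $C(v)\equiv0$ and hence $b_{2k-1,s}=0$ (polarization); for $k=2m$ it forces $b_{2m}^{(j)}=0$ for $j\ne m$ and $b_{2m,s}=\tfrac{s^{m}}{m!}b_{2m}^{(m)}$, i.e. \eqref{conicalevenodd}. Feeding this into Theorem \ref{moments}(1c), the only surviving term of $\sum_{k=1}^{2q}b_{k}^{(q)}(x^{k})=|x|^{2q}$ is $k=2q$, so $b_{2q}^{(q)}(x^{2q})=|x|^{2q}$ for $x\in\Sigma$, which is \eqref{varietyconical}.

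\textbf{Step 3.} Fix $u\in\Sigma$. By Step 2 the odd moments $\int\langle z,u\rangle^{2m+1}e^{-s|z|^{2}}d\mu$ vanish and $\int\langle z,u\rangle^{2m}e^{-s|z|^{2}}\,d\mu(z)=\tfrac{(2m)!}{4^{m}m!}s^{-m}I(s)|u|^{2m}$ (using $b_{2m}^{(m)}(u^{2m})=|u|^{2m}$, valid since $u\in\Sigma$); summing the absolutely convergent series in $t$,
\[
\int_{\RR^{d}}e^{-s|z|^{2}+t\langle z,u\rangle}\,d\mu(z)=I(1)\,s^{-n/2}\,e^{t^{2}|u|^{2}/(4s)},\qquad s>0,\ t\in\RR,
\]
which depends on $u$ only through $|u|$. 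The elementary analogue over $\RR^{n}$ reads $\int_{\RR^{n}}e^{-s|x|^{2}+t\langle x,e\rangle}\,d\mathcal{L}^{n}(x)=\pi^{n/2}s^{-n/2}e^{t^{2}|e|^{2}/(4s)}$, so for $|u|=|e|$ the two transforms agree up to the constant $C=I(1)\pi^{-n/2}$. Since $(s,t)\mapsto\int e^{-sa+tb}\,d\rho(a,b)$ is finite on all of $(0,\infty)\times\RR$ (the Gaussian dominates) and uniquely determines the Radon measure $\rho$ on $[0,\infty)\times\RR$, the image of $\mu$ under $z\mapsto(|z|^{2},\langle z,u\rangle)$ equals $C$ times the image of $\mathcal{L}^{n}$ under $x\mapsto(|x|^{2},\langle x,e\rangle)$; this is \eqref{protopolar} for indicator functions and, by monotone approximation, for every nonnegative Borel $f$ on $\RR^{2}$.

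\textbf{Main obstacle.} Step 1 and the bookkeeping in Step 2 are routine. The first genuinely delicate point is passing from the \emph{exact} scaling $b_{k,s}(v^{\odot k})=C(v)s^{k/2}$ to the conclusion: one must check that $C(v)$ has no hidden $s$-dependence and then invoke Theorem \ref{moments} to rule out half-integer powers --- this is where the conical hypothesis is essential. The harder step is the uniqueness claim in Step 3: the two pushforward measures are carried by the proper subcone $\{(a,b):b^{2}\le|u|^{2}a\}$ of $[0,\infty)\times\RR$, so one should either appeal to uniqueness of a two-sided Laplace transform that is finite on a half-plane of parameters, or run a direct Stone--Weierstrass / density argument on that cone. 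I expect this uniqueness step to be the real technical heart; everything else is algebra driven by the dilation invariance of $\mu$.
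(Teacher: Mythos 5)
The paper does not supply a proof of this statement; it is quoted verbatim from Preiss~\cite{P}, Theorem~3.10, and your three steps reconstruct Preiss's argument in essentially the form it appears there: the scaling $b_{k,s}=s^{k/2}b_{k,1}$, matched against the integer-power expansion of Theorem~\ref{moments} to kill the odd moments and isolate $b_{2k}^{k}$, followed by the Gaussian-transform comparison for \eqref{protopolar}. Your computations check out, and the ``main obstacle'' you flag is genuine but standard: after tilting both pushforwards on $[0,\infty)\times\RR$ by $e^{-a}$ one obtains finite measures whose moment generating functions are finite and coincide on the open neighborhood $(-\infty,1)\times\RR$ of the origin, and uniqueness for finite measures with MGF finite near $0$ then gives the equality of pushforwards, hence \eqref{protopolar} for all nonnegative Borel $f$.
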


 The following theorem is an important consequence of Theorem $\ref{moments}$. Note that the statement here is slightly different from Theorem 3.11 in $\cite{P}$. Indeed, we restate this theorem on convergence of measures in term of the metric from Definition $\ref{L(r)}$.
 
 \begin{theorem}[3.11, \cite{P}]\label{uniqueness}
 Let $\mu$ be an $n$-uniform measure in $\RR^{d}$. Then, for every $x \in \Sigma \cup \left\lbrace \infty \right\rbrace$, there exists a unique conical $n$-uniform measure ${\lambda}_x$ such that:
 \begin{itemize}
 \item Tan($\mu$,$x$)=$\left\lbrace c \lambda_{x} ; c>0 \right\rbrace$
 \item $\lim_{r \to 0} \mathcal{F}(r^{-n} T_{x,r}[\mu] , {\lambda}_{x})= 0$ if $x \neq \infty$.
 \item $\lim_{r \to \infty} \mathcal{F}(r^{-n} T_{y , r} [\mu] , \lambda_{\infty})=0$ for each $y \in \RR^{d}$.
\end{itemize}
 Moreover, for $\mu$-almost every $x \in \Sigma$, ${\lambda}_{x}$ is flat.

\end{theorem}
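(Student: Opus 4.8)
The plan is to follow the moment method: Theorem~\ref{weakconv} gives existence, Theorem~\ref{moments} pins down the tangent measures, Theorem~\ref{momentsconical} supplies conicity, and the Section~2 equivalence between weak convergence and convergence in $\mathcal F$ phrases the conclusion metrically. By translating $\mu$ we may assume $x=0\in\Sigma$; the point $x=\infty$ is treated at the end. Since $0\in\Sigma$ and $\mu$ is $n$-uniform, $(r^{-n}T_{0,r}[\mu])(B(0,R))=r^{-n}\mu(B(0,rR))=cR^{n}$ for every $R>0$, so the family $\{r^{-n}T_{0,r}[\mu]\}_{0<r\le 1}$ has mass $cR^{n}$ on $B(0,R)$, in particular locally uniformly bounded; by Theorem~\ref{weakconv} it is weakly precompact. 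Every subsequential limit $\nu$ is a nonzero Radon measure (its mass on $\overline B(0,1)$ is at least $c$) with $\nu(B(z,t))=ct^{n}$ for $z\in\supp\nu$ (the uniform density passes to the limit in the usual way), hence $n$-uniform, with $0\in\supp\nu$ and $\nu(B(0,1))=c$. So $\mathrm{Tan}(\mu,0)$ is nonempty, consists of $n$-uniform measures, and is closed under positive scaling; it remains to see it is a single ray and to identify the distinguished representative.

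For uniqueness I would first record the scaling identity for the moment forms: a change of variables $z=w/r$ in the integrals defining $b_{k,s}$, together with $I_{T_{0,r}[\mu]}(s)=I_{\mu}(s/r^{2})$, yields
\[
b_{k,s}\big(T_{0,r}[\mu]\big)=r^{k}\,b_{k,s/r^{2}}(\mu),
\]
and the multiplicative normalizing constant cancels in $b_{k,s}$, so $b_{k,s}(c_iT_{0,r_i}[\mu])=b_{k,s}(T_{0,r_i}[\mu])$. Each $b_{k,s}$ is integration against the Gaussian weight $e^{-s|z|^{2}}$ with $I(s)$ bounded below (here $I(s)\ge e^{-s}c$ for every measure of our family, and the mass bound $cR^n$ controls the tails), hence $b_{k,s}$ is continuous under the weak convergence at hand. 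Thus for $c_{i}T_{0,r_{i}}[\mu]\rightharpoonup\nu$ with $r_{i}\downarrow 0$ we get $b_{k,s}(\nu)=\lim_{i}r_{i}^{k}\,b_{k,s/r_{i}^{2}}(\mu)$, and since $s/r_{i}^{2}\uparrow\infty$ we substitute the large-$s$ expansion of Theorem~\ref{moments}(2): the term of index $j$ acquires a factor $r_{i}^{2j-k}$, which by $\hat b_{k}^{(j)}=0$ for $k>2j$ is harmless for $j<k/2$ and tends to $0$ for $2j>k$, while the remainder is killed by taking the expansion order large. Hence $b_{2k-1,s}(\nu)=0$ and $b_{2k,s}(\nu)=\tfrac{s^{k}}{k!}\hat b_{2k}^{(k)}(\mu)$, independently of the sequences $(r_{i}),(c_{i})$.

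A uniformly distributed measure is determined up to a multiplicative constant by $\{b_{k,s}\}_{k,s}$: its polynomial volume growth makes the series
\[
\widehat{e^{-s|\cdot|^{2}}\,d\nu}(\xi)=I_{\nu}(s)\sum_{k\ge 0}\Big(\tfrac{i}{2s}\Big)^{k} b_{k,s}(\nu)(\xi^{k})
\]
absolutely convergent and equal to the Fourier transform of $e^{-s|\cdot|^{2}}\,d\nu$, so the $b_{k,s}(\nu)$ determine $e^{-s|\cdot|^{2}}\,d\nu$, hence $\nu$, up to the scalar $I_{\nu}(s)$. Therefore every element of $\mathrm{Tan}(\mu,0)$ is a positive multiple of one $n$-uniform measure, and since each subsequential limit $\nu$ of $r^{-n}T_{0,r}[\mu]$ moreover satisfies $\nu(B(0,1))=c$, all such limits coincide; call the common measure $\lambda_{0}$, so $r^{-n}T_{0,r}[\mu]\rightharpoonup\lambda_{0}$ as $r\downarrow 0$ and $\mathrm{Tan}(\mu,0)=\{c'\lambda_{0}:c'>0\}$. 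That $\lambda_{0}$ is conical follows by composing homotheties: $T_{0,\rho}[\lambda_{0}]\in\mathrm{Tan}(\mu,0)$, so $T_{0,\rho}[\lambda_{0}]=c(\rho)\lambda_{0}$, and evaluating on $B(0,1)$ with $0\in\supp\lambda_{0}$ gives $c(\rho)\lambda_{0}(B(0,1))=\lambda_{0}(B(0,\rho))=\rho^{n}\lambda_{0}(B(0,1))$, i.e.\ $c(\rho)=\rho^{n}$. By the Section~2 equivalence between weak convergence and convergence in $\mathcal F$, the above is precisely $\mathcal F(r^{-n}T_{0,r}[\mu],\lambda_{0})\to 0$; undoing the translation produces $\lambda_{x}$ for each $x\in\Sigma$.

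For $x=\infty$ the identical computation applies to the blow-down $r^{-n}T_{y,r}[\mu]$ with $r\to\infty$: now $s/r^{2}\downarrow 0$, so one uses the small-$s$ expansion of Theorem~\ref{moments}(1) and $b_{k}^{(j)}=0$ for $2j<k$, producing a conical $n$-uniform $\lambda_{\infty}$ with $b_{2k-1,s}(\lambda_{\infty})=0$ and $b_{2k,s}(\lambda_{\infty})=\tfrac{s^{k}}{k!}b_{2k}^{(k)}(\mu)$; independence of the base point $y$ follows because, fixing $y_{0}\in\supp\mu$, the families $r^{-n}T_{y,r}[\mu]$ and $r^{-n}T_{y_{0},r}[\mu]$ differ on $B(0,R)$ by the $\mu$-mass of a set contained in an annulus of thickness $O(|y-y_{0}|)$ about $\partial B(y_{0},rR)$, which is $O(r^{n-1})=o(r^{n})$ by $n$-uniformity, so they have the same weak limits. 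The remaining assertion — that $\lambda_{x}$ is flat for $\mu$-a.e.\ $x$ — is the deep core of the theorem, quoted here from \cite{P}: it combines the conical variety structure $\supp\lambda_{x}\subset\bigcap_{k}\{b_{2k}^{(k)}(x^{2k})=|x|^{2k}\}$ of Theorem~\ref{momentsconical}, the classification of $1$- and $2$-uniform measures as flat, and a delicate argument showing the non-flat points form a $\mu$-null set. I expect this last step to be the main obstacle; by contrast the uniqueness and convergence statements reduce to the (clean) moment bookkeeping above.
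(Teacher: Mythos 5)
The paper does not prove this statement but cites it from Preiss \cite{P} (his Theorem 3.11), with the a.e.-flatness clause being Preiss's rectifiability theorem repackaged; your reconstruction via the moment forms $b_{k,s}$ is faithful to Preiss's own argument. The scaling identity $b_{k,s}(T_{0,r}[\mu])=r^{k}\,b_{k,s/r^{2}}(\mu)$, the use of the $s\uparrow\infty$ and $s\downarrow 0$ expansions of Theorem~\ref{moments} together with the vanishing $\hat b_{k}^{(j)}=b_{k}^{(j)}=0$ for $2j<k$ to suppress the terms that would otherwise blow up, the determination of a uniformly distributed measure up to scalar from its Gaussian moments, and the base-point independence of $\lambda_{\infty}$ are all the correct steps; the technical points you flag (continuity of $b_{k,s}$ under the relevant weak convergence, summability of the expansion) do depend on the uniform mass bound $cR^{n}$ and the lower bound $I(s)\ge ce^{-s}$ you invoke, and you rightly defer the a.e.-flatness to \cite{P}.
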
  

We know from Theorem $\ref{Preiss}$ that an $n$-uniform measure is $n$-rectifiable. We can translate this into a corollary on the rectifiability of the support of an $n$-uniform measure.

\begin{corollary}\label{supportrect}
Let $\mu$ be an $n$-uniform measure in $\RR^{d}$ with $\Sigma=\mbox{supp}(\mu)$ and let $c>0$ be such that for $x \in \Sigma$, $r>0$ \begin{equation}
\mu(B(x,r))=cr^{n}.
\end{equation} 
Then  $\Sigma$ is $n$-rectifiable and 
\begin{equation}\label{supportunifmeasure}
\mu = c  \omega_{n}^{-1}\mathcal{H}^{n} \res \Sigma.
\end{equation}
\end{corollary}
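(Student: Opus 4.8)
The plan is to deduce Corollary~\ref{supportrect} by combining Preiss' theorem (Theorem~\ref{Preiss}) with the standard density-comparison machinery for Hausdorff measures. First I would observe that an $n$-uniform measure $\mu$ automatically satisfies the hypothesis of Theorem~\ref{Preiss}: for every $x\in\Sigma$ we have $\mu(B(x,r))=cr^n$, so $\Theta^n(\mu,x)=c/\omega_n$ exists and is positive and finite; for $x\notin\Sigma$ the density is zero and that part of space is $\mu$-null, so the hypothesis holds for $\mu$-a.e.\ $x$. Hence $\mu$ is $n$-rectifiable, i.e.\ $\mu\ll\mathcal{H}^n$ and there is a countable family of $C^1$ $n$-manifolds $\{M_j\}$ with $\mu(\RR^d\setminus\bigcup_jM_j)=0$.

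Next I would upgrade ``$\mu$-a.e.'' statements about $\Sigma$ to statements about all of $\Sigma$ using that $\Sigma$ is closed and that $\mu$ has full support on $\Sigma$ with $\mu(B(x,r))=cr^n>0$ for every $x\in\Sigma$, $r>0$. In particular, since $\mu(\RR^d\setminus\bigcup_jM_j)=0$ and $\Sigma=\supp\mu$, the set $\bigcup_jM_j$ is dense in $\Sigma$; combined with the fact that $\mathcal{H}^n\res\Sigma$ is $\sigma$-finite (because $\mathcal{H}^n(B(x,r)\cap\Sigma)<\infty$, which itself follows from the upper density bound $\Theta^{*n}(\mu,\cdot)\le c/\omega_n$ together with $\mu\ll\mathcal H^n$ — see below), one gets that $\Sigma$ itself is $n$-rectifiable. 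This gives the first assertion.

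For the identity $\mu=c\omega_n^{-1}\mathcal{H}^n\res\Sigma$, the key input is the density-comparison lemma (Mattila, Theorem 6.9 in \cite{M}, or the version in \cite{S}): if $A$ is $\mathcal H^n$-measurable with $\mathcal H^n(A)<\infty$ and $\Phi$ is a Radon measure with $\Phi(\RR^d\setminus A)=0$, then $\theta^{*n}(\Phi,\cdot)=t$ everywhere on $A$ forces $\Phi=t\,\mathcal H^n\res A$, and more generally $a\le\theta^{*n}(\Phi,x)\le b$ on $A$ gives $a\,\mathcal H^n\res A\le\Phi\le 2^nb\,\mathcal H^n\res A$. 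I would apply the sharp version with $\Phi=\mu$, $A=\Sigma$, $t=c/\omega_n$: the density $\Theta^n(\mu,x)=c/\omega_n$ exists at \emph{every} point of $\Sigma$ (not merely a.e.), precisely because $\mu$ is $n$-uniform, so the comparison is an equality and yields $\mu=c\omega_n^{-1}\mathcal H^n\res\Sigma$ directly. One still must check $\mathcal H^n(\Sigma\cap B(0,R))<\infty$ so that the comparison theorem applies; this follows because $\mu\ll\mathcal H^n$ with $\mu$ locally finite and, conversely, the upper density bound $\limsup_{r\to0}\mathcal H^n(B(x,r)\cap\Sigma)/(\omega_nr^n)$ is controlled via the rectifiability of $\Sigma$ (each $C^1$ piece $M_j$ has locally finite $\mathcal H^n$-measure), together with $\mu(\Sigma\setminus\bigcup_jM_j)=0$ and the fact that any subset of $\Sigma$ disjoint from $\bigcup_jM_j$ is $\mathcal H^n$-null, the last point being exactly where one uses the quantitative density upper bound $\theta^{*n}(\mu,x)\le c/\omega_n$ to conclude $\mathcal H^n$-nullity from $\mu$-nullity.

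The main obstacle is the bookkeeping needed to pass from the $\mu$-a.e.\ conclusions supplied by Theorem~\ref{Preiss} and Theorem~\ref{uniqueness} to conclusions that hold on the closed set $\Sigma$ in its entirety, and to verify the $\sigma$-finiteness of $\mathcal H^n\res\Sigma$ required to invoke the density-comparison theorem in both directions ($\mu\le C\mathcal H^n\res\Sigma$ and $\mathcal H^n\res\Sigma\le C'\mu$). Once both absolute-continuity directions are in hand, the exact uniform density $\Theta^n(\mu,x)\equiv c/\omega_n$ on $\Sigma$ pins down the constant and upgrades the two-sided comparison to the stated equality with no loss; this last step is essentially immediate and is where being $n$-uniform (rather than merely having density between two constants) is used decisively.
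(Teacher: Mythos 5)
Your overall strategy — invoke Theorem~\ref{Preiss} for rectifiability of $\mu$, then pin down the Radon--Nikodym derivative against $\mathcal{H}^n\res\Sigma$ using the exact density $c/\omega_n$ — is the same as the paper's. But there is a genuine gap in the step that produces the identity $\mu=c\omega_n^{-1}\mathcal{H}^n\res\Sigma$. The ``sharp density-comparison lemma'' you cite (Mattila Thm.~6.9, stated as: $\theta^{*n}(\Phi,\cdot)\equiv t$ on $A$ with $\mathcal{H}^n(A)<\infty$ forces $\Phi=t\,\mathcal{H}^n\res A$) does not exist in that form. What Mattila~6.9 actually gives is the two-sided comparison with an unavoidable constant gap: $\theta^{*n}(\Phi,\cdot)\le t$ on $A$ yields $\Phi(A)\le 2^n t\,\mathcal{H}^n(A)$, while $\theta^{*n}(\Phi,\cdot)\ge t$ yields $\Phi(A)\ge t\,\mathcal{H}^n(A)$. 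Knowing the density of $\mu$ exactly, even at every point of $\Sigma$, does not close this $2^n$ gap, and the ``essentially immediate'' upgrade you assert at the end is precisely the nontrivial step. What is missing is information about the density of the \emph{other} measure, $\mathcal{H}^n\res\Sigma$ itself: one needs $\theta^n(\mathcal{H}^n\res\Sigma,x)=1$ for $\mathcal{H}^n$-a.e.\ $x\in\Sigma$, so that $D(\mu,\mathcal{H}^n\res\Sigma,x)=\theta^n(\mu,x)/\theta^n(\Sigma,x)=c/\omega_n$ a.e.\ and the differentiation theorem (Mattila Thm.~2.12) then gives the identity.

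That is exactly what the paper's proof supplies and your proposal does not. The paper invokes Theorem~\ref{PTT2}, which says that for an $n$-uniform measure the support is a $C^{1,\alpha}$ $n$-submanifold near $\mathcal{H}^n$-a.e.\ point, and from this reads off $\theta^n(\Sigma,x)=1$ a.e.\ directly. (Rectifiability alone would also do, via the standard fact that rectifiable sets have $\mathcal{H}^n$-density one a.e.; but either way some such input is required, and your write-up does not identify it.) A secondary point: you claim that the upper density bound $\theta^{*n}(\mu,\cdot)\le c/\omega_n$ lets you pass from $\mu$-nullity to $\mathcal{H}^n$-nullity on $\Sigma$; the implication in that direction actually uses the \emph{lower} bound $\theta^{*n}(\mu,\cdot)\ge c/\omega_n$ (Mattila 6.9(2)). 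Both bounds hold here since the density is exactly $c/\omega_n$, so this is only a misattribution, but it is worth flagging. Finally, the paper establishes the measure identity first and then reads off rectifiability of $\Sigma$ from $\mathcal{H}^n(\Sigma\setminus M)=\omega_n c^{-1}\mu(\Sigma\setminus M)=0$, which is cleaner than your route of proving $\sigma$-finiteness and absolute continuity up front.
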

\begin{proof}
By Theorem $\ref{PTT2}$, since $\mu$ is $n$-uniform, $\Sigma$ is a $C^{1,\alpha}$ $n$-manifold in the neighborhood of $\mathcal{H}^{n}$-almost every point. In particular, denoting the $n$-density of $\Sigma$ at $x$ by $\theta^{n}(\Sigma,x)$, we  have:
\begin{equation}\label{densitymanifoldish}
\theta^{n}(\Sigma,x)=1, \mbox{ for } \mathcal{H}^{n} \mbox{ almost every } x \in \Sigma.
\end{equation}
Let $D(x)$ denote $D(\mu, \mathcal{H}^{n}, x)$ the Radon-Nikodym derivative of $\mu$ with respect to $\mathcal{H}^n$ at $x$.
For $x \in \Sigma$, 
\begin{align*}
D(x) &=\theta^{n}(\mu,x) \theta^{n}(\Sigma,x)^{-1}, \\
        &=c \omega_{n}^{-1}\theta^{n}(\Sigma,x)^{-1}.
  \end{align*}
Theorem $2.12$ from $\cite{M}$ implies that for all $A \subset \Sigma$ 
\begin{equation}\label{abscont}
\mu(A)=c \omega_{n}^{-1} \int_{\Sigma} \theta^{n}(\Sigma,x)^{-1} d\mathcal{H}^{n}(x).
\end{equation}
Combining $\eqref{densitymanifoldish}$ and $\eqref{abscont}$, we get
 \begin{equation}\label{rect1}
\mu = c \omega_{n}^{-1} \mathcal{H}^{n} \res \Sigma.
\end{equation}

Now since $\mu$ is $n$-rectifiable by Theorem $\ref{Preiss}$, there exists an $n$-rectifiable set $M$ such that:
\begin{equation}\label{rect2}
\mu(\RR^{d} \cap M^{c})=0.
\end{equation}
Combining $\eqref{rect1}$ and $\eqref{rect2}$, we see that there exists a set $N=\Sigma \cap M^{c}$ of $\mathcal{H}^{n}$-measure zero such that:
$$\Sigma= M \cup N.$$
In particular, $\Sigma$ is $n$-rectifiable.

\end{proof}

 
 \begin{definition}\label{normalized} Let $\mu$ be an $n$-uniform measure in $\RR^{d}$, $x_0 \in \mbox{supp}(\mu) \cup \left\lbrace \infty \right\rbrace$. We will call $\mu^{x_0}$ the normalized tangent measure to $\mu$ at $x_0$ if $\mu^{x_0} \in \mbox{Tan}(\mu,x_0)$, and $\mu^{x_0}(B(0,1))=\omega_n$.
 \end{definition}
 
One of the most remarkable results in Preiss' paper $\cite{P}$ is a separation between flat and non-flat measures at infinity. We will state a reformulation of this theorem by De Lellis from $\cite{Del}$ which is better adapted to our needs.

\begin{theorem}[\cite{P}]\label{flatnessinfty}
Let $\mu$ be an  $n$-uniform measure in $\RR^{d}$, $\zeta$ its normalized tangent at $\infty$  (in the sense of Definition $\eqref{normalized}$). If $n \geq 3$, then there exists $\epsilon_{0}>0$ (depending only on $n$ and $d$) such that, if \begin{equation}
\min_{V \in G(n,d)} \int_{B(0,1)} dist^{2}(z,V) d\zeta(z) \leq \epsilon_{0},
\end{equation}
then $\mu$ is flat.

In particular, if $\mu$ is conical and \begin{equation} \min_{V \in G(n,d)} \int_{B(0,1)} dist^{2}(z,V) d\mu(z) \leq \epsilon_{0},
\end{equation} then $\mu$ is flat.
\end{theorem}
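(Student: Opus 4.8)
The first assertion is a reformulation of the central theorem of \cite{P}, and it is stated in \cite{Del} in exactly the present form (the dichotomy there is already phrased through the squared $L^{2}$-distance of the normalized blow-down $\zeta$ to $G(n,d)$); so the plan is to invoke \cite{Del} for the first assertion and to supply only the ``in particular'' clause. For that clause, let $\mu$ be conical $n$-uniform. From $T_{0,r}[\mu]=r^{n}\mu$ we get $r^{-n}T_{0,r}[\mu]=\mu$ for every $r>0$, so by the uniqueness of the tangent at infinity in Theorem \ref{uniqueness} (applied with base point $y=0$) the tangent measure of $\mu$ at $\infty$ is $\mu$ itself. Replacing $\mu$ by a positive multiple of itself changes neither conicality, nor $n$-uniformity, nor flatness, nor the plane achieving the minimum in the hypothesis, so we may assume $\mu(B(0,1))=\omega_{n}$; then $\mu$ is already its own normalized tangent at infinity, $\zeta=\mu$, and the hypothesis on $\mu$ is literally the hypothesis of the first assertion. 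Hence $\mu$ is flat.

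Should one instead want a self-contained proof of the first assertion, I would follow the two-step structure of Preiss' argument. Step one is the rigidity statement ``if the blow-down $\zeta$ of an $n$-uniform measure $\mu$ is flat, then $\mu$ is flat'': flatness of $\zeta$ pins the moment forms of Theorem \ref{moments} (equivalently the forms $\hat b_{k}^{(j)}$ governing the scale $s\uparrow\infty$) to their flat values, and a propagation-of-flatness argument then forces every tangent of $\mu$ to be flat as well; since $\mu$-a.e.\ tangent is flat by Theorem \ref{uniqueness} and $\mu$ is rectifiable by Theorem \ref{Preiss}, one upgrades this to flatness of $\mu$. Step two is the quantitative gap among conical measures: there is $\epsilon_{0}>0$, depending only on $n$ and $d$, such that every conical $n$-uniform measure of density one whose $L^{2}$-distance to some $n$-plane is at most $\epsilon_{0}$ is flat. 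This is a compactness argument: if it failed, there would be conical $n$-uniform measures $\zeta_{k}$ of density one, none flat, with $\min_{V}\int_{B(0,1)}\mathrm{dist}^{2}(z,V)\,d\zeta_{k}(z)\to 0$; the uniform mass bound $\zeta_{k}(B(x,r))=\omega_{n}r^{n}$ lets one extract a weak limit $\zeta_{\infty}$, again conical $n$-uniform of density one, with zero $L^{2}$-distance to an $n$-plane $V_{0}$, hence $\zeta_{\infty}$ flat; one then contradicts the non-flatness of $\zeta_{k}$ for large $k$ using the conical moment identities of Theorem \ref{momentsconical}, namely $b_{2}^{(1)}(x,x)=|x|^{2}$ and $b_{2}^{(2)}(x^{2})+b_{4}^{(2)}(x^{4})=|x|^{4}$ on $\mathrm{supp}(\zeta_{k})$, which force the ``anomalous'' part of the quartic form $b_{4}^{(2)}$ to be small, whereas Preiss' key estimate says that this form, once nonzero, is bounded below in norm; so it must vanish and $\mathrm{supp}(\zeta_{k})$ lies in an $n$-plane.

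The main obstacle is Step two, the quantitative rigidity of conical uniform measures near a plane — concretely, Preiss' lower bound on the anomalous quartic form. This is the genuinely ``three-to-four-dimensional'' part of the argument, the precise place where $n\ge 3$ is needed, and the reason the classification of uniform measures in codimension at least two is still open. For the purposes of this paper I would not reprove it and would instead cite \cite{Del} (equivalently \cite{P}).
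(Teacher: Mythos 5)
Your proposal matches the paper's treatment: the theorem is attributed to \cite{P} in the form reformulated by De Lellis in \cite{Del}, and the paper does not reprove it. Your derivation of the ``in particular'' clause from the first assertion by identifying a conical $\mu$ with its own tangent at infinity is the right observation and a useful piece of exposition that the paper leaves implicit.

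One small flaw in that derivation, though. You write that replacing $\mu$ by a positive multiple ``changes neither \dots nor the plane achieving the minimum in the hypothesis, so we may assume $\mu(B(0,1))=\omega_{n}$.'' It is true that the minimizing plane is scale-invariant, but the \emph{value} $\min_{V}\int_{B(0,1)}\mathrm{dist}^{2}(z,V)\,d\mu(z)$ scales linearly with $\mu$, so the hypothesis $\le\epsilon_{0}$ is \emph{not} preserved when you rescale $\mu$ to have density one unless $\mu(B(0,1))\ge\omega_{n}$. The clean way to say it is that the ``in particular'' clause should be read for a normalized conical measure (so $\zeta=\mu$ and the two hypotheses literally coincide); this is in any case the only situation in which the paper invokes it (Corollary \ref{functionalflatness}, Lemma \ref{connectednessblowups}, Theorem \ref{accumsing} all apply it to measures normalized to mass $\omega_{n}$ on $B(0,1)$). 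With that adjustment the argument is exactly right. Your sketch of a self-contained proof (rigidity of blow-downs plus a quantitative gap for conical measures near a plane) is a fair description of the structure of Preiss' argument, and you are right that the quantitative gap — the lower bound on the anomalous quartic form for $n\ge 3$ — is the hard core that this paper, reasonably, cites rather than reproves.
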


$\cite{Del}$  defines certain functionals that measure how far from flat a measure is and behave well under weak convergence.

\begin{definition} \label{functional}
Let $\varphi \in C_{c}(B(0,2))$, $0 \leq \varphi \leq 1$ and $\varphi=1$ on $B(0,1)$.
We define the functional $F: \mathcal{M}(\RR^{d}) \to \RR$ as
$$ F(\Phi): = \min_{V \in G(n,d)} \int \varphi(z) {dist}^{2}(z,V)d\Phi(z)$$
\end{definition}

\begin{lemma}[\cite{Del}]\label{contfunctional}
Let $\Phi_{j}$ , $\Phi$ be Radon measures such that $\Phi_{j} \rightharpoonup \Phi$.
Then $F(\Phi_{j}) \to F(\Phi)$. 
\end{lemma}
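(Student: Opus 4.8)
The plan is to realize $F$ as the minimum over the compact Grassmannian $G(n,d)$ of the family of functionals $\Phi \mapsto \int g_V\, d\Phi$, where $g_V(z) := \varphi(z)\,\mathrm{dist}^2(z,V)$, and then to show this convergence is uniform in $V$. First I would note that for each fixed $V \in G(n,d)$ the function $g_V$ is continuous and supported in $\overline{B(0,2)}$, so $g_V \in C_{c}(\RR^d)$; hence the weak convergence $\Phi_j \rightharpoonup \Phi$ immediately yields the pointwise statement $\int g_V\, d\Phi_j \to \int g_V\, d\Phi$ for every $V$. Next I would record a uniform mass bound: choosing $\psi \in C_{c}(B(0,3))$ with $0 \le \psi \le 1$ and $\psi \equiv 1$ on $\overline{B(0,2)}$, one gets $\Phi_j(\overline{B(0,2)}) \le \int \psi\, d\Phi_j \to \int \psi\, d\Phi < \infty$, so that $M := \sup_j \Phi_j(\overline{B(0,2)}) < \infty$.

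The key point is the continuity of the map $V \mapsto g_V$ into $\big(C_c(\RR^d),\, \|\cdot\|_\infty\big)$. Writing $\pi_V$ for orthogonal projection onto $V$, one has $\mathrm{dist}(z,V) = |z - \pi_V z|$, the assignment $V \mapsto \pi_V$ is continuous in operator norm on $G(n,d)$, and since $|z| \le 2$ on the common support and $0 \le \varphi \le 1$, a short computation gives $\|g_V - g_{V'}\|_\infty \le C\,\|\pi_V - \pi_{V'}\|_{op}$ for an absolute constant $C$. Setting $G_j(V) := \int g_V\, d\Phi_j$ and $G(V) := \int g_V\, d\Phi$, the estimate $|G_j(V) - G_j(V')| \le M\,\|g_V - g_{V'}\|_\infty$, together with its analogue for $\Phi$, shows that $\{G_j\}_j \cup \{G\}$ is an equicontinuous, equibounded family of functions on the compact metric space $G(n,d)$. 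Combining equicontinuity with the pointwise convergence $G_j(V) \to G(V)$ from the first paragraph, and covering $G(n,d)$ by finitely many balls of small radius, one upgrades this to uniform convergence: $\|G_j - G\|_\infty \to 0$.

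Finally, since $F(\Phi_j) = \min_{V \in G(n,d)} G_j(V)$ and $F(\Phi) = \min_{V \in G(n,d)} G(V)$, and $|\min_V G_j(V) - \min_V G(V)| \le \|G_j - G\|_\infty$, the uniform convergence gives $F(\Phi_j) \to F(\Phi)$. I expect the one genuinely substantive step to be the passage from pointwise to uniform convergence in $V$: this is exactly where both the uniform mass bound (a consequence of weak convergence tested against a single bump function) and the sup-norm continuity of $V \mapsto g_V$ on the compact Grassmannian are needed, the remaining steps being formal.
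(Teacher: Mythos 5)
Your proof is correct. The paper does not supply its own argument for this lemma --- it simply cites De Lellis \cite{Del} --- and what you have written is the standard proof: a uniform mass bound from testing against a single bump function, sup-norm Lipschitz continuity of $V\mapsto g_V$ on the compact Grassmannian, and the resulting uniform convergence of $V\mapsto\int g_V\,d\Phi_j$, which passes to the minimum. (De Lellis organizes it slightly differently, extracting a convergent subsequence $V_j\to V_*$ of minimizers rather than proving uniform convergence of the whole family, but the two ingredients --- pointwise weak convergence for fixed $V$ and equicontinuity in $V$ uniform in $j$ --- are identical.)
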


We can now reformulate Theorem $\ref{flatnessinfty}$ in terms of the functionals $F$.

\begin{corollary}\label{functionalflatness}
Let $\mu$ be an $n$-uniform measure on $\RR^{d}$, $\zeta$ its normalized tangent at infinity. If $n \geq 3$, there exists $\epsilon_{0}>0$ (depending only on $n$ and $d$) such that 
$$F(\zeta) \leq \epsilon_{0} \implies \mu \mbox{ is flat}.  $$
In particular, if $\mu$ is conical and $F(\mu) \leq \epsilon_{0}$ then $\mu$ is flat.
\end{corollary}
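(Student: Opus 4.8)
The plan is to derive this as a direct corollary of Theorem~\ref{flatnessinfty}, the only additional ingredient being a one-line comparison showing that the functional $F$ of Definition~\ref{functional} dominates the quantity $\min_{V\in G(n,d)}\int_{B(0,1)} dist^2(z,V)\,d\Phi(z)$ that appears in that theorem. Everything then comes down to the fact that the cutoff $\varphi$ was chosen with $\varphi\equiv 1$ on $B(0,1)$.

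First I would record the elementary estimate: for every Radon measure $\Phi$ on $\RR^d$ and every $V\in G(n,d)$,
$$\int_{B(0,1)} dist^2(z,V)\,d\Phi(z)\ \le\ \int \varphi(z)\,dist^2(z,V)\,d\Phi(z),$$
since $0\le\varphi\le 1$, $\varphi\equiv 1$ on $B(0,1)$, and $dist^2(z,V)\ge 0$; indeed the difference of the two sides equals $\int_{B(0,2)\setminus B(0,1)}\varphi(z)\,dist^2(z,V)\,d\Phi(z)\ge 0$. Next, letting $V^*\in G(n,d)$ be a plane attaining the minimum in the definition of $F(\Phi)$ and substituting $V=V^*$ above, then passing to the infimum over $V$ on the left-hand side, I would obtain
$$\min_{V\in G(n,d)}\int_{B(0,1)} dist^2(z,V)\,d\Phi(z)\ \le\ \int_{B(0,1)} dist^2(z,V^*)\,d\Phi(z)\ \le\ F(\Phi).$$

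With this inequality the Corollary is immediate, using the same $\epsilon_0$ furnished by Theorem~\ref{flatnessinfty} (so it still depends only on $n$ and $d$). Applying the inequality with $\Phi=\zeta$: if $F(\zeta)\le\epsilon_0$ then $\min_{V}\int_{B(0,1)} dist^2(z,V)\,d\zeta(z)\le\epsilon_0$, hence $\mu$ is flat by Theorem~\ref{flatnessinfty}. For the ``in particular'' statement, I would apply the inequality instead with $\Phi=\mu$: if $\mu$ is conical and $F(\mu)\le\epsilon_0$, then $\min_{V}\int_{B(0,1)} dist^2(z,V)\,d\mu(z)\le\epsilon_0$, and the conical case of Theorem~\ref{flatnessinfty} yields that $\mu$ is flat.

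There is no real obstacle here: this is a reformulation whose only content is the choice of cutoff in Definition~\ref{functional}. The single point requiring a little care is the order of quantifiers over planes — one must compare the two integrals for one and the same plane $V^*$ (the minimizer of $F$) before passing to the infimum, since the minimizer of $\int_{B(0,1)} dist^2(z,\cdot)\,d\Phi$ and that of $\int \varphi\, dist^2(z,\cdot)\,d\Phi$ need not coincide.
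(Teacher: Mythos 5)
Your proof is correct and follows exactly the same route as the paper: observe that $\chi_{B(0,1)}\le\varphi$, deduce $\min_V\int_{B(0,1)}dist^2(z,V)\,d\Phi\le F(\Phi)$, and invoke Theorem~\ref{flatnessinfty} with the same $\epsilon_0$. The extra care you take about the order of quantifiers (fixing the minimizer $V^*$ of $F$ before passing to the infimum) is a nice bit of rigor that the paper leaves implicit, but it is not a different argument.
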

\begin{proof}
By definition of $\varphi$, we have: 
$$\chi_{B(0,1)}(x) \leq \varphi(x),$$
for all $x \in \RR^{d}$.
This implies that
$$\min_{V \in G(n,d)} \int_{B(0,1)} dist^{2}(z,V) d\zeta(z)  \leq F(\zeta),$$
and in particular, if $\epsilon_{0}$ is the constant from Theorem $\ref{flatnessinfty}$
\begin{align*}
F(\zeta) \leq \epsilon_{0} & \implies \min_{V \in G(n,d)} \int_{B(0,1)} dist^{2}(z,V) d\zeta(z) \leq \epsilon_{0}, \\ & \implies \mu \mbox{ is flat. }
\end{align*}
This ends the proof.
\end{proof}

Another result concerning the geometry of supports of uniformly distributed measures was proven in $\cite{KiP}$, with the added condition that their support be bounded. This result states that in this case, the support is in fact an algebraic variety.
\begin{theorem}[\cite{KiP}]\label{boundedvariety}
Let $\mu$ be a uniformly distributed measure over $\RR^{d}$ with bounded support and let $u \in \Sigma$. Then $x \in \Sigma$ if and only if:
\begin{equation}\label{boundedsupport} P_{k}(x)= \int_{\RR^d} {\left\langle z-x, z-x \right\rangle}^{k} - {\left\langle z-u,z-u \right\rangle}^{k} d\mu(z)=0, 
\end{equation}
for every $k \in \NN$.
\end{theorem}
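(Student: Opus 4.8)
The plan is to establish the two implications separately; the forward direction is routine, and the converse is where the real content lies. Suppose first that $x\in\Sigma$. Since $\Sigma$ is bounded, the distances $|z-x|$, $z\in\Sigma$, are bounded and the function $t\mapsto t^{2k}$ is $\mu$-integrable, so applying Theorem \ref{radial} with $f(t)=t^{2k}$ gives $\int|z-x|^{2k}\,d\mu(z)=\int|z-u|^{2k}\,d\mu(z)$, i.e. $P_k(x)=0$ for every $k\in\NN$.

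For the converse, fix $x\in\RR^{d}$ with $P_k(x)=0$ for all $k$. The idea is to package the even moments $\int|z-x|^{2k}\,d\mu(z)$ into a single Laplace-type integral and compare its asymptotics at those of the analogous integral based at $u$. Because $\Sigma$ is bounded, $\mu$ is a finite measure and $|z-x|$ is bounded on $\Sigma$; hence the series $\sum_k\frac{s^k}{k!}\int|z-x|^{2k}\,d\mu(z)$ converges for every $s\ge 0$, and Tonelli's theorem yields $\int e^{-s|z-x|^{2}}\,d\mu(z)=\sum_k\frac{(-s)^k}{k!}\int|z-x|^{2k}\,d\mu(z)$, and similarly with $u$ in place of $x$. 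Since $P_k(x)=0$ means exactly $\int|z-x|^{2k}\,d\mu(z)=\int|z-u|^{2k}\,d\mu(z)$ for all $k$, comparing the two series term by term gives
$$\int e^{-s|z-x|^{2}}\,d\mu(z)=\int e^{-s|z-u|^{2}}\,d\mu(z)\qquad\mbox{for all }s\ge 0.$$
Now assume, for contradiction, that $x\notin\Sigma$. Since $\Sigma$ is closed, $\delta:=\mathrm{dist}(x,\Sigma)>0$, so the left-hand side is at most $\mu(\RR^{d})\,e^{-s\delta^{2}}$. For the right-hand side, choose $\rho\in(0,\delta)$; since $u\in\Sigma=\mathrm{supp}(\mu)$ we have $\mu(B(u,\rho))=\phi(\rho)>0$, so the right-hand side is at least $\phi(\rho)\,e^{-s\rho^{2}}$. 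Dividing the two estimates gives $1\le(\mu(\RR^{d})/\phi(\rho))\,e^{-s(\delta^{2}-\rho^{2})}\to 0$ as $s\to+\infty$, a contradiction. Hence $x\in\Sigma$.

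The only delicate points are the interchange of summation and integration (handled by the boundedness of $\Sigma$ together with Tonelli) and the Laplace-type comparison as $s\to\infty$; the latter — that a point at positive distance from $\Sigma$ forces strictly faster Gaussian decay of $\int e^{-s|z-x|^{2}}\,d\mu(z)$ than the point $u\in\Sigma$ produces — is the heart of the argument. It is worth noting that the bounded-support hypothesis is used twice: to guarantee that all even moments are finite (so that the generating function is well defined) and to ensure $\mu(\RR^{d})<\infty$ in the final estimate.
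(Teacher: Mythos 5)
The paper states this theorem as a direct citation of Kirchheim–Preiss \cite{KiP} and supplies no proof of its own, so there is nothing in the text to compare against; what you have written is a reconstruction of (the essential part of) the argument in \cite{KiP}, which is indeed built on comparing the Laplace transforms $s\mapsto\int e^{-s|z-x|^{2}}\,d\mu(z)$ at different base points.

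Your argument is correct, and the structure is sound: establish equality of all even moments via Theorem \ref{radial} in one direction, package them into a Gaussian integral in the other, and extract a contradiction from the differing exponential decay rates when $\mathrm{dist}(x,\Sigma)>0$. Two small points of precision. First, the interchange of sum and integral is by Fubini, not Tonelli: the summands $\frac{(-s)^{k}}{k!}|z-x|^{2k}$ are signed, so Tonelli gives only the finiteness of the absolute series $\sum_{k}\frac{s^{k}}{k!}\int|z-x|^{2k}\,d\mu$, after which Fubini (or dominated convergence on partial sums) yields the stated identity. Second, it is worth saying explicitly why $\mu(\RR^{d})<\infty$: $\mu$ is Radon, $\Sigma$ is bounded, and for $R$ large and any $u\in\Sigma$ one has $\mu(\RR^{d})=\mu(B(u,R))=\phi(R)<\infty$. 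Neither point affects the correctness; the proof stands.
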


\subsection{Measures with locally $C^{\alpha}$ density ratio and their geometry}
In $\cite{DKT}$ and $\cite{PTT}$, the authors proved that for measures with nice density ratios, Theorem $\cite{P}$ can be improved in the sense that the support is a $C^{1,\beta}$-manifold in the neighborhood of every flat point, for some $\beta >0$.
Let us start with some definitions.
For $x \in \Sigma$  where $\Sigma$ is a closed set and $r>0$, set: 
\begin{equation}
\theta(x,r)= \frac{1}{r} \inf \left\lbrace D\left[\Sigma \cap \overline{B(x,r)},L\cap \overline{B(x,r)} \right] : L \text{ affine n-plane through }x \right\rbrace ,
\end{equation}
 where,
 \begin{equation} 
 D[E,F]=\sup\left\lbrace dist(y,F) : y \in E\right\rbrace + \sup \left\lbrace dist(y,E) : y \in F \right\rbrace
 \end{equation}
 denotes the Hausdorff distance between the closed sets $E$ and $F$.

\begin{definition}
Let $\delta >0$ be given. We say that the closed set $\Sigma \subset \RR^{d}$ is $\delta$-Reifenberg-flat of dimension $n$ if for all compact sets $K \subset \Sigma$ there is a radius $r_K >0$ such that:
\begin{equation}
\theta(x,r) \leq \delta \text{ for all } x \in K \text{ and } 0<r \leq r_{K}.
\end{equation}

\end{definition}
\begin{definition}
We say that the closed set $\Sigma$ is Reifenberg flat with vanishing constant of dimension $n$ if for every compact subset $K$ of $\Sigma$: 
\begin{equation}
\lim_{r \to 0^{+}} \theta_{K}(r)=0 
\end{equation}
where 
\begin{equation}
\theta_{K}(r)=\sup_{x \in K} \theta(x,r) .
\end{equation}
\end{definition}

\begin{definition}
Let $\mu$ be a Radon measure on $\RR^d$.
\begin{itemize}
\item  We say that $\mu$ has $n$-density ratio locally $C^{\alpha}$  if, for each compact set $K \subset \Sigma$, there is a constant $C_{K}$ such that:
\begin{equation}\label{PTTdoublingrate}
\left|\frac{\mu(B(x,r))}{{\omega}_n r^n}-1 \right| \leq C_{K} r^{\alpha},
\end{equation}
for $x \in K$ and $0<r<1$.

\item If $x \in \Sigma$, $r>0$ and $t \in (0,1]$, define the quantity:
\begin{equation}
R_{t}(x,r)= \frac{\mu(B(x,tr))}{\mu(B(x,r))} - t^{n}.
\end{equation}
We say $\mu$ is asymptotically optimally doubling if for each compact set $K \subset \Sigma$, $x \in K$, and $t \in [\frac{1}{2},1]$ 
\begin{equation}\label{asymptoptim}
\lim_{r \to 0^{+}} \sup_{x \in K} \left| R_{t}(x,r) \right| = 0.
\end{equation}
\end{itemize}
\end{definition}
The following results from $\cite{PTT}$ and $\cite{L}$ describe the geometry of the support of a  measure based on information on its density ratio. 

\begin{theorem}[1.9, \cite{PTT}] \label{PTT1}
For each $\alpha \in (0,1]$, there exists $\beta=\beta(\alpha)>0$ with the following property. Suppose $\mu$ is a positive Radon measure supported on $\Sigma \subset \RR^{d}$ and for each compact set $K \subset \Sigma $ there exists a constant $C_K$ such that:
\begin{equation}\label{doublingratio}
|R_{t}(x,r)|\leq C_{K} r^{\alpha} \mbox{ for } r \in (0,1], t \in [\frac{1}{2},1] \mbox{ and } x \in K.
\end{equation}
 Then: \begin{itemize}
\item If $n =1,2$,  $\Sigma$ is a $C^{1,\beta}$-submanifold of dimension $n$ in $\RR^{d}$.
 \item   If $n\geq 3$ there exists a constant $\delta(n,d)$ such that if $x_0 \in \Sigma$ and $\Sigma \cap B(x_0,2R_0)$ is $\delta$-Reifenberg-flat, then $\Sigma \cap B(x_0,R_0)$ is a $C^{1,\beta}$-submanifold of dimension $n$ in $\RR^{d}$.
 \end{itemize}
\end{theorem}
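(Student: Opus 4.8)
The plan is to run an Allard-type excess-decay scheme in the measure-theoretic setting, reducing everything to a Hölder decay estimate for the flatness $\theta(x,r)$ and then extracting $C^{1,\beta}$ regularity in the standard way. First I would observe that it is enough to prove the following: for every compact $K \subset \Sigma$ (compactly contained in $B(x_0,2R_0)$ when $n \ge 3$) there are $r_K>0$ and $\beta = \beta(\alpha) \in (0,\alpha]$ with $\theta(x,r) \le C_K\, r^{\beta}$ for all $x \in K$ and $0<r\le r_K$. Granting this, the near-optimal affine planes $L_{x,r}$ appearing in the definition of $\theta(x,r)$ form a Cauchy family as $r \downarrow 0$, since the Hausdorff-distance bound forces $\angle(L_{x,r},L_{x,r/2}) \lesssim r^{\beta}$; they converge to a plane $T_x$ with $\angle(L_{x,r},T_x) \lesssim r^{\beta}$, the map $x \mapsto T_x$ is locally $\beta$-Hölder (by comparing at nearby base points), and hence $\Sigma$ is locally the graph of a $C^{1,\beta}$ function over $T_x$.

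Next I would record what the hypothesis $|R_t(x,r)| \le C_K r^{\alpha}$ buys. Telescoping it over dyadic scales gives $\bigl|\,\mu(B(x,\rho))/\mu(B(x,r)) - (\rho/r)^{n}\,\bigr| \le C_K'\, r^{\alpha}$ for $0 < \rho \le r \le 1$, so $\mu$ is asymptotically optimally doubling with a rate, and every tangent measure of $\mu$ at every point of $\Sigma$ is $n$-uniform. When $n = 1,2$, Preiss' classification forces all these tangents to be flat planes, so $\Sigma$ has no singular points and the reduction's hypothesis is to be checked at every point. When $n \ge 3$, a tangent measure is an $n$-uniform measure whose support is a weak limit of rescalings of the $\delta$-Reifenberg-flat set $\Sigma \cap B(x_0,2R_0)$, hence is itself $\delta$-Reifenberg-flat; choosing the dimensional constant $\delta(n,d)$ below the threshold of Theorem \ref{flatnessinfty} (equivalently, small enough to exclude the Kowalski--Preiss cone, whose support fails to be Reifenberg-flat at small scales) forces such a tangent to be flat too. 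So in both regimes all tangents on the relevant region are flat.

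The core step, and the one I expect to be the main obstacle, is the quantitative excess decay: fixing $\tau \in (0,1/4)$, I would prove that there are $\eta>0$ and $C$ such that, for $x \in K$ and $r$ small,
\[
\theta(x,r) \le \eta \ \Longrightarrow\ \theta(x,\tau r) \le \tfrac12\,\theta(x,r) + C\, r^{\alpha}.
\]
This is a contradiction-and-compactness argument: if it failed along a sequence $(x_j,r_j)$, I would push forward and rescale $\mu$ by $T_{x_j,r_j}$, normalize, and then dilate the normal fluctuations of $\Sigma$ by $1/\theta(x_j,r_j) \to 0$; by the previous step the rescaled measures subconverge to a flat $n$-uniform measure, and the renormalized graphs converge to a solution of the associated linearized problem on an $n$-plane --- a harmonic function --- whose mean-value property yields $\theta(0,\tau) \le \tfrac12\,\theta(0,1)$, contradicting the choice of $(x_j,r_j)$. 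The extra term $C r^{\alpha}$ is precisely the size of the defect between $\mu$ at scale $r$ and its best flat model, which the hypothesis controls by $r^{\alpha}$. The delicate point is that $\Sigma$ is a priori only Reifenberg-flat, not a Lipschitz graph, so before linearizing one must invoke a David--Toro Reifenberg parametrization to represent $\Sigma \cap B(x,r)$ as a small-constant (bi-)Lipschitz graph, all while keeping the error terms at order $r^{\alpha}$.

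Finally I would iterate. By the second step, after shrinking $r_K$ one has $\theta(x,r_K) \le \eta$ for every $x \in K$; iterating the excess-decay estimate along $r_k = \tau^k r_K$ gives $\theta(x,r_k) \le 2^{-k}\theta(x,r_K) + C\sum_{j<k} 2^{-(k-j)} r_j^{\alpha}$, which is $\lesssim \tau^{k\beta}$ for $\beta$ slightly below $\min\{\alpha,\ \log 2/\log(1/\tau)\}$; filling in the intermediate radii by the near-monotonicity of $r \mapsto \theta(x,r)$ across comparable scales yields $\theta(x,r) \le C_K\, r^{\beta}$ uniformly on $K$, which is exactly the estimate required by the reduction in the first step. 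This would complete the proof.
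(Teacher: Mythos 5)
The paper does not prove this statement; Theorem~\ref{PTT1} is quoted verbatim as Theorem~1.9 of \cite{PTT}, and the adjacent refinement used in the sequel is taken from \cite{L}. There is therefore no in-paper argument to compare your proposal against. I will instead assess the proposal as a reconstruction of the \cite{PTT} proof.

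Your outer skeleton --- reduce to a H\"older decay estimate $\theta(x,r)\lesssim r^{\beta}$, derive it by iterating a one-step improvement, conclude $C^{1,\beta}$ regularity by convergence of the approximating planes --- is the right shape and matches the general strategy of the literature on Reifenberg-type theorems. The telescoping of $R_t$ to get an $r^\alpha$-rate of asymptotic optimal doubling, and the observation that tangent measures are therefore $n$-uniform (hence flat when $n\leq2$ by Preiss, and flat under the extra Reifenberg hypothesis when $n\geq3$), is also a correct reduction and is essentially how \cite{PTT} begin. The issue is the central excess-decay step, where the proposal is thin at exactly the point where the real work lies. You write that the renormalized graphs ``converge to a solution of the associated linearized problem \ldots a harmonic function,'' but the hypotheses here do not supply a PDE. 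Allard-type harmonic approximation works because stationarity (small generalized mean curvature) gives, upon linearization, Laplace's equation for the normal displacement. A doubling bound on $\mu(B(x,tr))/\mu(B(x,r))$ has no such variational content. Indeed, if $\Sigma$ is the graph of $\epsilon v$ over a plane, then both $\mathcal{H}^{n}\llcorner\Sigma$ and the ball-count $\mu(B(p,r))$ agree with the flat model to first order in $\epsilon$; the uniformity constraint only bites at second order, so it is not clear that the renormalized limit $v$ satisfies any linear elliptic equation, let alone that one gets a factor $1/2$ improvement at scale $\tau$. You would need to derive approximate harmonicity of the displacement \emph{from the doubling hypothesis}, and this derivation --- which is precisely the technical heart of \cite{PTT} --- is absent. (The actual \cite{PTT} argument is organized around controlling $L^2$ Jones-type flatness via the moments of uniform measures and a careful compactness/uniqueness argument, rather than a literal graph-over-plane harmonic replacement.)

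Two smaller points. First, the claim that a tangent measure of a $\delta$-Reifenberg-flat set is itself $\delta$-Reifenberg-flat is not automatic: the definition requires $\theta(x,r)\le\delta$ only for $r\le r_K$, and passing to a blow-up limit degrades this to $\theta\le\delta$ at all scales only after an additional argument (one has to fix the threshold $\delta$ a priori so that it is strictly below the constant of Theorem~\ref{flatnessinfty} and show stability under weak limits). Second, in the final iteration you need $\theta$ to be approximately monotone across comparable scales before you can upgrade the dyadic estimate $\theta(x,\tau^k r_K)\lesssim\tau^{k\beta}$ to all radii; this is routine but should be checked, since $\theta(x,r)$ and $\theta(x,r')$ for $r'\in(\tau r,r)$ are only comparable up to a multiplicative constant involving $1/\tau$, which must be absorbed by taking $\beta$ slightly smaller, as you note.
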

 \begin{theorem} [\cite{PTT}, \cite{L}] \label{PTT2}
 
For each $\alpha >0$, there exists $\beta=\beta(\alpha)$ with the following property. If $\mu$ is a positive Radon measure supported on $\Sigma \subset \RR^d$ whose $n$-density ratio is locally $C^{\alpha}$, then:
\begin{itemize}
\item(1.10, $\cite{PTT}$) if $n=1,2$, $\Sigma$ is a $C^{1,\beta}$ submanifold of dimension $n$ in $\RR^{d}$.
\item(1.10, $\cite{PTT}$) if $n \geq 3$, $\Sigma$ is a $C^{1,\beta}$ submanifold of dimension $n$ in $\RR^{d}$ away from a closed set $\mathcal{S}$ such that $\mathcal{H}^{n}(\mathcal{S})=0$, where  $\mathcal{S}=\Sigma \backslash \mathcal{R}$ and $\mathcal{R}= \left\lbrace x \in \Sigma ; \limsup_{r \to 0} \theta(x,r) = 0 \right\rbrace$.
\item (1.7, $\cite{L}$) If $n=3$, $d=4$, and $x$ is a non-flat point of $\Sigma$, there exists a neighborhood of $x$ which is $C^{1,\beta}$ diffeomorphic to an open piece of the KP-cone $C$ in $\eqref{KPcone}$, containing the singular point $0$.
\end{itemize}
 \end{theorem}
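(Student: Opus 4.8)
The plan is to derive the three assertions from the Reifenberg-regularization machinery of Theorem \ref{PTT1} together with a blow-up analysis of the support $\Sigma$. The one elementary step is to check that the hypothesis $\eqref{PTTdoublingrate}$ implies the doubling-ratio hypothesis $\eqref{doublingratio}$: on a compact $K\subset\Sigma$ with $\bigl|\mu(B(x,r))/(\omega_n r^n)-1\bigr|\le C_K r^{\alpha}$, we have for $t\in[\tfrac12,1]$ and $r\in(0,1]$
$$
R_t(x,r)=\frac{\mu(B(x,tr))}{\mu(B(x,r))}-t^n
= t^n\,\frac{1+O(r^{\alpha})}{1+O(r^{\alpha})}-t^n = O(r^{\alpha}),
$$
with the implied constant depending only on $C_K$ and $n$; hence $\eqref{doublingratio}$ holds with the same exponent. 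For $n=1,2$ this lets us invoke Theorem \ref{PTT1} directly and conclude that $\Sigma$ is a $C^{1,\beta}$ submanifold of dimension $n$, which is the first bullet.

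For $n\ge 3$ the content is to produce, near the good points, the Reifenberg-flatness hypothesis of the second bullet of Theorem \ref{PTT1}. First, the $C^{\alpha}$ density ratio forces every tangent measure of $\mu$ at every point of $\Sigma$ to be $n$-uniform, since $r^{-n}T_{x,r}[\mu]$ has density ratio converging to the constant $1$; in particular $\mu$ is asymptotically optimally doubling in the sense of $\eqref{asymptoptim}$, and by the density theory of rectifiable sets its blow-ups are $n$-flat at $\mathcal H^{n}$-a.e.\ point, i.e.\ $\mathcal H^{n}(\Sigma\setminus\mathcal R)=0$ with $\mathcal R=\{x:\limsup_{r\to0}\theta(x,r)=0\}$. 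The substantive point is that around each $x_0\in\mathcal R$ there is a ball in which $\theta(\cdot,\cdot)$ lies uniformly below the threshold $\delta(n,d)$ of Theorem \ref{PTT1}: otherwise there would be $x_j\to x_0$ in $\Sigma$ and $r_j\to0$ with $\theta(x_j,r_j)>\delta(n,d)$, and passing to a weak limit of the rescalings $r_j^{-n}T_{x_j,r_j}[\mu]$ (a tangent measure of $\mu$ at $x_0$, hence flat because $x_0\in\mathcal R$) contradicts $\theta(x_j,r_j)>\delta(n,d)$, using lower semicontinuity of the support under weak convergence and the uniform density bounds. The second bullet of Theorem \ref{PTT1} then applies in each such ball, giving the $C^{1,\beta}$ structure on $\mathcal R$, with $\mathcal S=\Sigma\setminus\mathcal R$ of $\mathcal H^{n}$-measure zero.

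For the third bullet ($n=3$, $d=4$, $x$ non-flat) I would combine the Kowalski--Preiss classification with a Reifenberg argument carried out over a cone. Every tangent measure at $x$ is a conical $3$-uniform measure in $\RR^4$, so by $\cite{KoP}$ (the codimension-one case) it is either $3$-flat or a rotated, rescaled copy of $\mathcal H^{3}\res C$; non-flatness of $x$ forces the latter. Away from $x$ the cone $C$ is smooth and the flat theory of Theorem \ref{PTT1} parametrizes $\Sigma$; near $x$ one runs a "graph over $C$" scheme in which the $C^{\alpha}$ density-ratio decay makes the tilt of $\Sigma$ relative to suitably rotated copies of $C$ decay geometrically along dyadic scales, and telescoping the rotations — exactly as in Reifenberg's topological disk theorem but with $C$ in place of a plane — yields a $C^{1,\beta}$ diffeomorphism from a neighborhood of the vertex of $C$ onto a neighborhood of $x$ in $\Sigma$.

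The main obstacle, in both the $n\ge 3$ cases, is the absence of an off-the-shelf "excess decay over a cone": the standard Reifenberg/Allard machinery is organized around planes, and near the vertex of the KP-cone one must re-run it with $C$ as reference surface, which requires knowing quantitatively that $C$ is isolated among non-flat conical $3$-uniform models in $\RR^4$ so that no other singular profile can appear at intermediate scales. Making that rigidity effective — a definite gap between $C$ and the next admissible blow-up — is the crux, and it is what ultimately fixes the exponent $\beta=\beta(\alpha)$.
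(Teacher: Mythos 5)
This statement is quoted from $\cite{PTT}$ and $\cite{L}$ and the paper provides no proof of it, so there is no in-paper argument to compare yours against; you are reconstructing external results. Against that standard, your first bullet is fine: the algebra reducing $\eqref{PTTdoublingrate}$ to $\eqref{doublingratio}$ is correct (using $t\in[\tfrac12,1]$ so $(tr)^{\alpha}\le r^{\alpha}$), and the $n=1,2$ case then really is an immediate consequence of Theorem~\ref{PTT1}.

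Your second bullet has a genuine gap. You want to show that near a point $x_0\in\mathcal R$ the quantity $\theta(\cdot,\cdot)$ is uniformly below the Reifenberg threshold $\delta(n,d)$, and you argue by contradiction: take $x_j\to x_0$, $r_j\to 0$ with $\theta(x_j,r_j)>\delta$, and claim the weak limit of $r_j^{-n}T_{x_j,r_j}[\mu]$ is a tangent measure of $\mu$ at $x_0$, hence flat. That step fails: a blow-up centered at a moving sequence $x_j\to x_0$ is a \emph{pseudo-tangent}, not a tangent, of $\mu$ at $x_0$, and pseudo-tangents need not be flat merely because all tangents at $x_0$ are. Concretely, if $|x_j-x_0|/r_j\to\infty$ you are zooming in along points that escape the $r_j$-scale neighborhood of $x_0$, and the limiting measure can perfectly well be non-flat (this is exactly how singular points can accumulate at flat ones in general, and it is why the set $\mathcal S$ in the $n\ge 3$ case is not a priori empty). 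What the PTT argument actually establishes is that $\mathcal S$ is closed, equivalently that $\mathcal R$ is relatively open and Reifenberg flat with vanishing constant on its compact subsets; that openness is the substantive input you are missing, and it does not follow from your compactness argument.

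For the third bullet you correctly identify that the whole difficulty is developing a Reifenberg-type excess decay over the Kowalski--Preiss cone rather than over a plane, together with a quantitative rigidity statement isolating $C$ among non-flat conical $3$-uniform profiles in $\RR^4$. That is not a small technical point to be filled in later; it is essentially the entirety of Lewis's paper $\cite{L}$. So your write-up for this bullet is an accurate high-level description of the strategy, but it does not constitute a proof, and you should not present it as one.
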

 
 \begin{theorem}[III.5.9, \cite{DS}] \label{weakconv2}
 Let $\left\lbrace \mu_{j} \right\rbrace_{j}$ be a sequence of $n$-uniform measures with constant $c$, converging weakly to a Radon measure $\lambda$. Then for every ball $B \subset \RR^{d}$, we have: 
 $$ \lim_{j \to \infty} \left(\sup_{x \in B \cap \mbox{supp}(\lambda)} \mbox{ dist}(x,\mbox{supp}\mu_j) \right) = 0 $$ and
 $$\lim_{j \to \infty} \left(\sup_{x \in B \cap \mbox{supp}(\mu_j)} \mbox{ dist}(x,\mbox{supp}\lambda) \right) = 0.$$
 \end{theorem}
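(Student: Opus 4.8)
The plan is to treat the two displayed limits separately. The first---that every point of $\mathrm{supp}(\lambda)$ lying in $B$ is within distance $o(1)$ of $\mathrm{supp}(\mu_j)$---is a soft consequence of weak convergence together with the definition of support, and does not use $n$-uniformity at all. The second---that every point of $\mathrm{supp}(\mu_j)$ in $B$ is within distance $o(1)$ of $\mathrm{supp}(\lambda)$---does use the hypothesis, specifically the fact that $n$-uniformity with a \emph{fixed} constant $c$ gives a lower mass bound $\mu_j(B(x,r))=cr^n$ that is uniform in $j$. We may assume $\lambda\neq 0$: otherwise a support point of $\mu_j$ inside a fixed ball $B(z,R)$ would force $\mu_j(B(z,2R))\geq\mu_j(B(x_j,R))=cR^n$, which is incompatible with $\mu_j\rightharpoonup 0$ if it happens for infinitely many $j$, so both suprema are eventually over the empty set.

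For the first limit I would argue by contradiction: if it failed, there would be a ball $B$, an $\epsilon>0$, a subsequence (not relabelled), and points $x_j\in\overline{B}\cap\mathrm{supp}(\lambda)$ with $\mathrm{dist}(x_j,\mathrm{supp}(\mu_j))\geq\epsilon$. After passing to a further subsequence $x_j\to x$, and $x\in\mathrm{supp}(\lambda)$ since the support is closed. For $j$ large, $|x_j-x|<\epsilon/2$, so $B(x,\epsilon/2)\cap\mathrm{supp}(\mu_j)=\emptyset$ and $\mu_j(B(x,\epsilon/2))=0$; by the lower semicontinuity of mass on open sets under weak convergence, $\lambda(B(x,\epsilon/2))\leq\liminf_j\mu_j(B(x,\epsilon/2))=0$, contradicting $x\in\mathrm{supp}(\lambda)$.

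For the second limit, again suppose it fails: there are a ball $B$, an $\epsilon>0$, a subsequence, and points $x_j\in B\cap\mathrm{supp}(\mu_j)$ with $\mathrm{dist}(x_j,\mathrm{supp}(\lambda))\geq\epsilon$. Pass to a subsequence so $x_j\to x\in\overline{B}$; then $\mathrm{dist}(x,\mathrm{supp}(\lambda))\geq\epsilon$, hence $B(x,\epsilon)\cap\mathrm{supp}(\lambda)=\emptyset$ and $\lambda(B(x,\epsilon))=0$. Now I would invoke $n$-uniformity: since $x_j\in\mathrm{supp}(\mu_j)$ we have $\mu_j(B(x_j,\epsilon/4))=c(\epsilon/4)^n$, and for $j$ large $|x_j-x|<\epsilon/4$ gives $B(x_j,\epsilon/4)\subset B(x,\epsilon/2)$, so $\mu_j(B(x,\epsilon/2))\geq c(\epsilon/4)^n$. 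Testing against some $f\in C_c(B(x,\epsilon))$ with $0\leq f\leq 1$ and $f\equiv 1$ on $\overline{B(x,\epsilon/2)}$, weak convergence gives $\lambda(B(x,\epsilon))\geq\int f\,d\lambda=\lim_j\int f\,d\mu_j\geq c(\epsilon/4)^n>0$, contradicting $\lambda(B(x,\epsilon))=0$.

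I do not anticipate a genuine obstacle here: this is the standard phenomenon that a uniform lower Ahlfors-type regularity bound upgrades weak convergence of measures to local Hausdorff convergence of their supports. The only step that must be stated with care is that the lower mass bound $c(\epsilon/4)^n$ is independent of $j$---this is precisely what ``$n$-uniform with constant $c$'' supplies, and it is exactly where the argument would break down for an arbitrary weakly convergent sequence of Radon measures.
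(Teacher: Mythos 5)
The paper does not prove this statement; it is quoted verbatim from David--Semmes (Theorem III.5.9 of \cite{DS}), so there is no in-paper argument to compare yours against. Your proof is correct and is the standard one for upgrading weak convergence of measures with a uniform lower Ahlfors-type bound to local Hausdorff convergence of supports: the first limit uses only the general fact that $\lambda(G)\leq\liminf_j\mu_j(G)$ for open $G$, while the second is precisely where the $j$-independent lower bound $\mu_j(B(x_j,r))=cr^n$ at support points enters, and you deploy that hypothesis exactly where it is needed. The two small points that must be handled with care---the dichotomy between $\lambda=0$ and $\lambda\neq 0$, and the passage to the limit of the lower bound $\mathrm{dist}(x_j,\mathrm{supp}\,\lambda)\geq\epsilon$ via continuity of $y\mapsto\mathrm{dist}(y,\mathrm{supp}\,\lambda)$---are both addressed correctly. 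I see no gap.
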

As a corollary of Theorem $\ref{PTT2}$ and Theorem $\ref{weakconv2}$, we get:

\begin{corollary}\label{consPTT2}[\cite{PTT}]
Let $\mu$ be an $n$-uniform measure in $\RR^{d}$, let $\Sigma$ be its support and $x \in \Sigma$ a flat point. Then there exists $R >0$ depending on $x$, $n$, $d$, $\mu$ and $\beta$  such that $\Sigma \cap B(x,R)$ is a $C^{1,\beta}$ $n$-submanifold.
\end{corollary}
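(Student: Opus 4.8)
\emph{Proof strategy.} The plan is to deduce this from Theorem~\ref{PTT2} by showing that every flat point of $\Sigma$ automatically belongs to its ``regular set''. After rescaling $\mu$ so that $\mu(B(x,r))=\omega_n r^n$ on $\Sigma$ --- which affects neither $\Sigma$ nor the set of flat points --- the $n$-density ratio of $\mu$ is identically $1$, hence locally $C^{\alpha}$ for every $\alpha$, so Theorem~\ref{PTT2} applies. For $n=1,2$ it already gives that $\Sigma$ is globally a $C^{1,\beta}$ $n$-submanifold and there is nothing to prove; so assume $n\ge 3$. Theorem~\ref{PTT2} then produces a \emph{closed} set $\mathcal{S}=\Sigma\setminus\mathcal{R}$, where $\mathcal{R}=\{y\in\Sigma:\limsup_{r\to 0}\theta(y,r)=0\}$, such that $\Sigma\setminus\mathcal{S}$ is a $C^{1,\beta}$ $n$-submanifold. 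Hence it suffices to prove: \emph{if $x\in\Sigma$ is a flat point, then $\limsup_{r\to 0}\theta(x,r)=0$}. Indeed, granting this, $x\in\mathcal{R}=\Sigma\setminus\mathcal{S}$; since $\mathcal{S}$ is closed, any $R<\mathrm{dist}(x,\mathcal{S})$ works, because then $\Sigma\cap B(x,R)\subseteq\Sigma\setminus\mathcal{S}$, which is a $C^{1,\beta}$ $n$-submanifold, and so is its relatively open subset $\Sigma\cap B(x,R)$.

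To prove that a flat point $x$ satisfies $\limsup_{r\to 0}\theta(x,r)=0$ is where Theorem~\ref{weakconv2} enters. By Definition~\ref{flat} and Theorem~\ref{uniqueness}, the conical tangent $\lambda_x$ of $\mu$ at $x$ equals $c'\,\mathcal{H}^{n} \res V$ for some $n$-plane $V$ through $0$ and some $c'>0$, and $\mu_r:=r^{-n}T_{x,r}[\mu]\rightharpoonup\lambda_x$ as $r\to 0$. The key remark is that for every $r>0$ the blow-up $\mu_r$ is again $n$-uniform with the \emph{same} constant as $\mu$, with $\mathrm{supp}(\mu_r)=r^{-1}(\Sigma-x)$, while $\mathrm{supp}(\lambda_x)=V$. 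If $\limsup_{r\to 0}\theta(x,r)>0$, pick $r_j\downarrow 0$ with $\theta(x,r_j)\ge\varepsilon>0$. Applying Theorem~\ref{weakconv2} to $\mu_{r_j}\rightharpoonup\lambda_x$ on the ball $B(0,2)$ gives, with $a_j:=\sup\{\mathrm{dist}(z,V):z\in\overline{B(0,2)}\cap\mathrm{supp}(\mu_{r_j})\}$ and $b_j:=\sup\{\mathrm{dist}(w,\mathrm{supp}(\mu_{r_j})):w\in\overline{B(0,2)}\cap V\}$, that $a_j\to 0$ and $b_j\to 0$. Taking the affine $n$-plane $L=x+V$ (which passes through $x$) as a competitor in the infimum defining $\theta(x,r_j)$, then rescaling by $r_j$ and translating by $-x$, one obtains
\[
\theta(x,r_j)\ \le\ D\big[\,\mathrm{supp}(\mu_{r_j})\cap\overline{B(0,1)},\ V\cap\overline{B(0,1)}\,\big]\ \le\ a_j+2b_j\ \longrightarrow\ 0,
\]
contradicting $\theta(x,r_j)\ge\varepsilon$.

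I expect the main (and essentially the only genuine) obstacle to be the last chain of inequalities, specifically the bound $D[\mathrm{supp}(\mu_{r_j})\cap\overline{B(0,1)},\,V\cap\overline{B(0,1)}]\le a_j+2b_j$. The subtlety is that the nearest point of $\mathrm{supp}(\mu_{r_j})$ to a point of $V\cap\overline{B(0,1)}$ need not lie in $\overline{B(0,1)}$; this is precisely why the estimate is run on the larger ball $B(0,2)$, and it is fixed in the standard way --- for the ``$\Sigma$-side'' of $D$ one orthogonally projects onto the linear subspace $V$ (which does not increase norms), and for the ``$V$-side'' one first contracts the competitor point radially toward $0$ inside $V$ by a factor $1-b_j$ and then uses the $b_j$-approximation there. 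The remaining steps --- the behaviour of Hausdorff distance under dilations and translations, and the passage from the $\mathcal{F}$-convergence of Theorem~\ref{uniqueness} to genuine weak convergence --- are routine; all the conceptual weight sits in Theorems~\ref{PTT2}, \ref{weakconv2} and~\ref{uniqueness}.
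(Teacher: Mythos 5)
Your proof is correct and takes essentially the same approach as the paper: reduce via Theorem~\ref{PTT2} to proving that a flat point $x$ lies in $\mathcal{R}$, then deduce $\theta(x,r)\to 0$ from Theorem~\ref{weakconv2} applied to the blow-ups $\mu_{x,r_j}\rightharpoonup c'\,\mathcal{H}^n\res V$. The paper's proof is terser (it simply says the conclusion ``follows directly'' from Theorem~\ref{weakconv2}), while you supply the projection/contraction bookkeeping for the Hausdorff-distance estimate explicitly; there is no substantive difference.
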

\begin{proof}
It is clear from Theorem $\ref{PTT2}$ that we only need to prove that every flat point of $\mu$ is in $\mathcal{R}$, namely that $\lim_{r \to 0} \theta(x,r) =0$ for such an $x$. 
But taking $\mu_{j}$ to be $\mu_{x,r_j}$ and $\lambda= \mathcal{H}^{n} \res V$, where $V$ is the tangent plane at $x$, the result follows directly from Theorem $\ref{weakconv2}$.
\end{proof}

\newpage
\section{The base case: singularities of 3-uniform conical measures}
We first study the case where $\nu$ is a conical $3$-uniform measure. 
We start by proving that $\nu$ decomposes into a uniformly distributed spherical component and a radial component. 

Let $\nu$ be a conical $n$-uniform measure in $\mathbb{R}^d$, with $0$ in its support. Let $\Sigma$ be the support of $\nu$. 
In particular, since $\nu$ is conical, for any $r>0$, we have by Theorem $\ref{momentsconical}$
$$\Sigma = r \Sigma.$$

By Theorem $\ref{supportrect}$, normalizing $\nu$ if necessary,
 $$\nu= \mathcal{H}^n \res \Sigma.$$

\begin{definition}
Let $\nu$ be a conical $n$-uniform measure in $\mathbb{R}^d$, with $0$ in its support, $\Sigma$ its support.
We define $\sigma$ to be the spherical component of $\nu$, namely:
$$\sigma = \mathcal{H}^{n-1} \res ({\Sigma \cap {S^{d-1}}}),$$
where $S^{d-1}= \left\lbrace x \in \RR^{d} ; |x|=1 \right\rbrace$.
\end{definition}

\begin{definition}  Let $E \subset S^{d-1}$ and $\rho>0$. We define $\rho E$ the dilate of $E$ by: $$ \rho E = \{y \in \mathbb{R}^d ; \frac{y}{|y|} \in E, |y|= \rho \}. $$
We define $E_{r}^{\delta}$, the $(r,\delta)$- neighborhood of $E$, or $\delta$-neighborhood of $r E$  to be: 
\begin{equation} \label{polarnhood} E_{r}^{\delta} = \left\lbrace y; \frac{y}{|y|} \in E, |y| \in (r(1-\delta), r(1+\delta)) \right\rbrace. 
\end{equation} \end{definition}

Our first goal is to prove a polar decomposition for $\nu$, namely that $\nu$ decomposes into its spherical component and a radial component.

\begin{theorem}\label{polar} Let $\nu$ be a conical $n$-uniform measure in $\RR^{d}$. Let $g$ be a Borel function on $\RR^{d}$. Then:
\begin{equation}\label{eqpolar}
\int g(x) d\nu(x)= \int_{0}^{\infty} \rho^{n-1} \int g(\rho x') d\sigma(x') d\rho, 
\end{equation} 
where $\rho=|x|$ and $x'=\frac{x}{|x|}$. \end{theorem}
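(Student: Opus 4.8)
The plan is to prove \eqref{eqpolar} by reducing, via the co-area formula (Theorem on co-area, with $M=\Sigma$, $f(x)=|x|$, $m=1$), to an identity relating the slice $\Sigma\cap\{|x|=\rho\}$ to the dilate $\rho(\Sigma\cap S^{d-1})$, and then using the conical structure $\Sigma=\lambda\Sigma$ to transport everything to $\rho=1$. Since $\nu=\mathcal H^n\res\Sigma$ by Corollary \ref{supportrect} (after normalization) and $\Sigma$ is $n$-rectifiable by the same corollary, the co-area formula applies. With $f(x)=|x|$, at an $\mathcal H^n$-a.e.\ point $x\in\Sigma$ the tangent plane $T_x\Sigma$ contains the radial direction $x/|x|$ (this is because $\Sigma$ is a cone: the curve $t\mapsto t x$ lies in $\Sigma$, so its velocity $x$ lies in $T_x\Sigma$), hence the gradient $\nabla^\Sigma f(x)$ is exactly the unit radial vector and $J_\Sigma^* f(x)=|\nabla^\Sigma f(x)|=1$ for $\mathcal H^n$-a.e.\ $x\in\Sigma$. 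Therefore the co-area formula gives
\[
\int g(x)\,d\nu(x)=\int_\Sigma g(x)\,d\mathcal H^n(x)=\int_0^\infty\int_{\Sigma\cap\{|x|=\rho\}}g(z)\,d\mathcal H^{n-1}(z)\,d\rho.
\]

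It then remains to identify the inner integral with $\rho^{n-1}\int g(\rho x')\,d\sigma(x')$. Here I would use the conical structure once more: the radial dilation $D_\rho(x')=\rho x'$ is a bijective $C^1$ map from $\Sigma\cap S^{d-1}$ onto $\Sigma\cap\{|x|=\rho\}$ (bijectivity and the $C^1$ property on $\Sigma\setminus\{0\}$ follow from $\Sigma=\rho\Sigma$ and the rectifiability/analyticity of $\Sigma$), and on the $(n-1)$-dimensional tangent space to $\Sigma\cap S^{d-1}$ — which is orthogonal to the radial direction — the differential of $D_\rho$ acts as multiplication by $\rho$. Hence the Jacobian $J(D_\rho)\equiv\rho^{n-1}$ on $\Sigma\cap S^{d-1}$, and the area formula (Theorem, area formula) yields
\[
\int_{\Sigma\cap\{|x|=\rho\}}g(z)\,d\mathcal H^{n-1}(z)=\int_{\Sigma\cap S^{d-1}}g(\rho x')\,\rho^{n-1}\,d\mathcal H^{n-1}(x')=\rho^{n-1}\int g(\rho x')\,d\sigma(x').
\]
Substituting this into the displayed co-area identity gives \eqref{eqpolar}. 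By a standard approximation argument (monotone convergence, starting from $g=\chi_A$) it suffices to verify the identity for non-negative Borel $g$, which is harmless since both sides are of the quoted form.

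The main obstacle is the rigorous justification of the two Jacobian computations — namely that $J_\Sigma^* f\equiv 1$ for $f=|x|$ and that $J(D_\rho)\equiv\rho^{n-1}$ — on the rectifiable set $\Sigma$ rather than on a smooth manifold. Both hinge on the single geometric fact that the radial direction lies in the approximate tangent plane $T_x\Sigma$ at $\mathcal H^n$-a.e.\ point and that the complementary $(n-1)$ directions of $T_x\Sigma$ are tangent to the sphere through $x$. I would establish this by noting that conicality forces $T_{\lambda x}\Sigma=T_x\Sigma$ for all $\lambda>0$ and $\mathcal H^n$-a.e.\ $x$ (tangent planes are dilation invariant for a cone), together with the fact that the full ray $\mathbb R_+ x\subset\Sigma$, so its tangent line $\mathbb R x$ is contained in $T_x\Sigma$; the orthogonal complement of $\mathbb R x$ inside $T_x\Sigma$ is then necessarily the tangent space of the spherical slice. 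One must also check the null sets match up: the set of $x\in\Sigma$ where the tangent plane fails to exist or fails to contain the radial direction is $\mathcal H^n$-null, hence its intersection with $\mathcal H^{n-1}$-a.e.\ sphere is $\mathcal H^{n-1}$-null by the co-area formula itself, so the area-formula step on $\Sigma\cap S^{d-1}$ is unaffected. Once these measure-theoretic bookkeeping points are in place, the computation is routine.
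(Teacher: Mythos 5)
Your argument is essentially the same as the paper's: both apply the co-area formula to $f(x)=|x|$ on the rectifiable set $\Sigma$, both use the conical structure to show the radial direction lies in the tangent plane at $\mathcal H^n$-a.e.\ point and hence $J_\Sigma^* f\equiv 1$, and both then rescale the spherical slices by $\rho^{n-1}$ and finish with monotone convergence. The one cosmetic difference is that for the rescaling step you invoke the area formula for the dilation $D_\rho$, whereas the paper simply uses the elementary scaling identity $\mathcal H^{n-1}(\rho E)=\rho^{n-1}\mathcal H^{n-1}(E)$ together with $\Sigma/\rho=\Sigma$, which is slightly more direct and avoids any regularity discussion of the slice.
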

\begin{proof}
Let $u: \mathbb{R}^d \to \mathbb{R}_+$ be the function given by: $u(x)=|x|$.
Our first aim is to prove that for any $g=\chi_{A}$ where  $A \subset \RR^{d}$ is a Borel set, we have:
\begin{equation} \label{polarstep1}
\int g(x) d\nu(x)= \int_{0}^{\infty} \rho^{n-1} \int g(\rho x') d\sigma(x') d\rho.
\end{equation}

Note that if $A$ is a Borel set, $\frac{A}{\rho} \cap S^{d-1}$ being the intersection of the pre-image of $A$ by the dilation homeomorphism, with the Borel set $S^{d-1}$, is also Borel.

Now, since $u$ is Lipschitz (in fact smooth away from 0), and $J_{\Sigma}^{*}u=|\nabla^{\Sigma}u|$  we can apply the co-area formula  $\eqref{coarea}$  to the rectifiable set $\Sigma$, the Lipschitz function $u$ and the Borel function $\chi_{A}$ to get:
\begin{equation}\label{coarea1} \int_{A \cap \Sigma} |\nabla^{\Sigma}u|(y) d\mathcal{H}^n(y)= \int_{0}^{\infty}  \int_{u^{-1}(\rho)} \chi_{A \cap \Sigma}(y) d\mathcal{H}^{n-1}(y) d\rho. 
\end{equation}
Note that by Theorem $\ref{PTT2}$, away from a closed set $\mathcal{S}$ of $\mathcal{H}^n$ measure 0, $\Sigma$ is a $C^{1,\alpha}$ submanifold of dimension $n$. Therefore, in ($\ref{coarea1}$) , we can define $\nabla^{\Sigma}u$ to be the gradient in the manifold sense at almost every point.

We first claim that $|\nabla^{\Sigma}u|(x)=1$ for almost every $x \in \Sigma$. Let $x$ be a flat point of $\Sigma$ (namely a point where $\nu$ admits a unique flat tangent). We can take $\tau_x=\frac{x}{|x|}$ to be an element of an orthonormal basis of $P_x$. Indeed, $x$ being a flat point, by Corollary $\ref{consPTT2}$, $\Sigma$ is a $C^{1,\beta}$-manifold in a neighborhood of $x$, and the tangent space at $x$ is $P_x$. 
Now consider the curve $\gamma(t)=t \tau_x +x$. 
Since $x \in \Sigma$ and $\nu$ is conical, $\gamma \in \Sigma$, $\gamma(0)=x$ and $\gamma'(0)=\tau_x$.
Complete the unit vector $\tau_1=\tau_{x}$ to a full orthonormal basis $\{\tau_i\}_{i=1}^{n}$ of $\RR^{d}$. We have $\nabla u(x)= \tau_x$. Therefore: $\nabla^{\tau_x} u = \tau_x \;.\; \tau_x =1$ and $\nabla^{\tau_j} u = \tau_x \; . \; \tau_j = 0$ for $j>1$, by construction of the basis.  Since almost every point of $\Sigma$ is flat, this proves that $|\nabla^{\Sigma}u|= 1$ almost everywhere, proving the claim.
 Moreover, if $E \subset S^{d-1}$ is a Borel set, since $\nu$ is conical and $\Sigma= \frac{\Sigma}{\rho}$, we have:
 \begin{align}\label{Hausdorffdilation}
 \mathcal{H}^{n-1}(\rho E \cap \Sigma) & = \mathcal{H}^{n-1}\left(\rho\left(E \cap \frac{\Sigma}{\rho}\right)\right), \nonumber \\
 &=\rho^{n-1} \mathcal{H}^{n-1}\left(E \cap \frac{\Sigma}{\rho}\right), \\
 &= \rho^{n-1} \mathcal{H}^{n-1}\left(E \cap \Sigma\right). \nonumber
 \end{align}
 
Therefore,
\begin{align}
\label{polarsubstep1.1}
\int_{A \cap \Sigma}  d\mathcal{H}^{n}(y)
&=\int_{A \cap \Sigma} J^{*}u(y) d\mathcal{H}^{n}(y), \nonumber\\
& =\int_{0}^{\infty}  \int_{u^{-1}(\rho)} \chi_{A \cap \Sigma}(y) d\mathcal{H}^{n-1} (y) d\rho \nonumber,\\
&=\int_{0}^{\infty} \mathcal{H}^{n-1}(\Sigma \cap A \cap {{\partial B}_{\rho}}) d\rho,\\
&=\int_{0}^{\infty} \rho^{n-1} \mathcal{H}^{n-1}(\frac{\Sigma}{\rho} \cap \frac{A}{\rho} \cap {{\partial B}_{1}}) d\rho ,\nonumber \\
&=\int_{0}^{\infty} \rho^{n-1} \mathcal{H}^{n-1}(\Sigma \cap \frac{A}{\rho} \cap {{\partial B}_{1}} )d \rho , \text{ since } \Sigma \text{ is conical, } \nonumber \\
&=\int_{0}^{\infty} \rho^{n-1} \int \chi_{A}(\rho z') d\sigma(z') d\rho . \nonumber
\end{align}

Now let $g$ be a non-negative Borel function. Then there exists an increasing sequence of simple functions $\left\lbrace g_k \right\rbrace$ converging pointwise to $g$. In particular, if $g_k$ increase to $g$ pointwise, then $G_k$ increase pointwise to $G$ where $G_k (\rho) = \rho^{n-1}\int g_{k}(\rho x') d\sigma(x')$ and $G(\rho) = \rho^{n-1}\int g(\rho x') d\sigma(x')$. By the monotone convergence theorem and linearity of the integral, $\eqref{eqpolar}$ also holds for non-negative Borel functions. The extension to general Borel functions follows easily.
\end{proof}

Having proven that $\nu$ decomposes into two components, we now study the spherical component $\sigma$. By using the polar decomposition, we can prove that $\sigma$ is uniformly distributed. Of particular interest to us is the case where $\nu$ is $3$-uniform: the spherical component is then locally $2$-uniform.
We start with some notations.
Denote $\Sigma \cap S^{d-1}$ by $\Omega$. Then $\sigma = \mathcal{H}^{n-1} \res \Omega$. 

Let $B_r(x)= \left\lbrace z \in S^{d-1} ; |z-x| <r \right\rbrace$, and 
$\left(B_r(x)\right)_{1}^{\epsilon}$ be as in ($\ref{polarnhood}$).

  \begin{theorem}\label{spherunifdist} Let $\nu$ be as in Theorem $\ref{polar}$. Then $\sigma$ the spherical component of $\nu$  is a uniformly distributed measure. \end{theorem}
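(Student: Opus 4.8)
The plan is to combine the polar decomposition of Theorem~\ref{polar} with the ``polar identity'' \eqref{protopolar} from Theorem~\ref{momentsconical}, which applies here since a conical $n$-uniform measure is in particular a uniformly distributed conical measure. The identity \eqref{protopolar} says that $\nu$ is indistinguishable from $C$ times Lebesgue measure on $\RR^n$ as soon as one only probes it through the pair $(|z|^2,\langle z,u\rangle)$ with $u\in\Sigma$; the whole point is that this probing is exactly fine enough to read off the spherical caps $\sigma(B_r(u))$, yet coarse enough that the answer visibly does not depend on $u$. Writing $\Omega=\Sigma\cap S^{d-1}$ and $\sigma=\mathcal H^{n-1}\res\Omega$, I want to produce one function $\phi\colon\RR_{+}\to\RR_{+}$ with $\sigma(B_r(u))=\phi(r)$ for every $u\in\Omega$ and $r>0$; since $\mathrm{supp}(\sigma)\subset\Omega$, this is precisely what ``uniformly distributed'' demands.

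Concretely, I would fix $u\in\Omega$, a unit vector $e\in\RR^n$, and $r,s>0$, and apply \eqref{protopolar} to the non-negative Borel function $f$ on $\RR^2$ defined by $f(a,b)=1$ when $0\le a<s^2$ and $b>\sqrt a\,(1-r^2/2)$, and $f(a,b)=0$ otherwise. For $z\neq 0$ one has $\langle z,u\rangle>|z|(1-r^2/2)\iff\langle z/|z|,u\rangle>1-r^2/2\iff|z/|z|-u|^2=2-2\langle z/|z|,u\rangle<r^2$, so $f(|z|^2,\langle z,u\rangle)=\chi_{\{|z|<s\}}\,\chi_{\{z/|z|\in B_r(u)\}}$. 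Feeding this into the polar decomposition \eqref{eqpolar} gives
$$\int_{\RR^d}f\bigl(|z|^2,\langle z,u\rangle\bigr)\,d\nu(z)=\int_0^s\rho^{n-1}\,d\rho\cdot\sigma\bigl(B_r(u)\bigr)=\frac{s^n}{n}\,\sigma\bigl(B_r(u)\bigr).$$
The same chord computation on $\RR^n$ gives $f(|x|^2,\langle x,e\rangle)=\chi_{\{|x|<s\}}\chi_{\{x/|x|\in K_r\}}$ for $x\neq 0$, where $K_r=\{y\in S^{n-1}:|y-e|<r\}$ is a spherical cap, so by integration in polar coordinates on $\RR^n$,
$$\int_{\RR^n}f\bigl(|x|^2,\langle x,e\rangle\bigr)\,d\mathcal L^n(x)=\int_0^s\rho^{n-1}\,d\rho\cdot\mathcal H^{n-1}(K_r)=\frac{s^n}{n}\,\mathcal H^{n-1}(K_r).$$
Plugging these into \eqref{protopolar} and cancelling $s^n/n$ yields $\sigma(B_r(u))=C\,\mathcal H^{n-1}(K_r)$.

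The right-hand side depends only on $r$ and on the fixed data $n,d,e,C$, not on $u$, so setting $\phi(r):=C\,\mathcal H^{n-1}(\{y\in S^{n-1}:|y-e|<r\})$ finishes the proof: $\sigma(B_r(u))=\phi(r)$ for all $u\in\Omega$, $r>0$. (Taking $f\equiv\chi_{\{a<1\}}$ in \eqref{protopolar} identifies $C=c/\omega_n>0$, so $\phi$ is a genuinely positive function and in fact $\mathrm{supp}(\sigma)=\Omega$; and the small-$r$ behaviour $\phi(r)\sim C\omega_{n-1}r^{n-1}$ is the first hint of the local $2$-uniformity of $\sigma$ exploited later when $n=3$.)

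I do not expect a serious obstacle along this route; the real content is the realization that \eqref{protopolar} is the correct lever. The alternative — not using \eqref{protopolar} — would force one to invert, for each $u$ separately, the relation $c\,t^n=\nu\bigl(B(u,t)\bigr)=\int_0^\infty\rho^{n-1}\sigma\bigl(B_{R(\rho,t)}(u)\bigr)\,d\rho$ obtained from the identity $|\rho x'-u|^2=(\rho-1)^2+\rho|x'-u|^2$ (valid for $|x'|=|u|=1$) together with \eqref{eqpolar}, and then to argue that its unique solution is independent of $u$; that Abel/Volterra-type inversion is the genuinely delicate point which the argument above sidesteps. What remains is then routine measure-theoretic bookkeeping: Borel measurability of $f$, the null sets $\{z=0\}$ and $\{\,|z/|z|-u|=r\,\}$, and the polar-coordinate decomposition $\mathcal L^n=\int_0^\infty\rho^{n-1}\,(\mathcal H^{n-1}\res S^{n-1})\,d\rho$ on $\RR^n$.
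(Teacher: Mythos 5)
Your proof is correct and takes essentially the same route as the paper: both apply Preiss's identity \eqref{protopolar} to a Borel test function depending only on $(|z|,\langle z,u\rangle)$, compute one side via the polar decomposition of Theorem~\ref{polar}/co-area and the other via polar coordinates in $\RR^n$, and read off that $\sigma(B_r(u))$ is independent of $u$. The only real difference is cosmetic: you probe with the solid truncated cone $\{|z|<s,\ z/|z|\in B_r(u)\}$, whereas the paper probes with the thin annular shell $(B_r(x))_1^{\epsilon}$ and then divides by $2\epsilon$ and lets $\epsilon\to 0$; your choice sidesteps that limit, and you also track the constant $C$ from \eqref{protopolar}, which the paper's displayed computation silently absorbs.
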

  \begin{proof} Let $x \in \Omega$, $r>0$. Define the set $N_{\left\lbrace|x|,r\right\rbrace} \subset [0,\infty) \times \mathbb{R}$ to be:
$$N_{\left\lbrace|x|,r\right\rbrace}=\left\lbrace(a,b) \in  [0,\infty) \times \mathbb{R} ; (1+|x|^2-r^2)a -2b <0 \right\rbrace.$$
Then:
\begin{align*}z \in \left( B_r(x) \right)_{1}^{\epsilon} & \iff \left| \frac{z}{|z|}-x \right|^2 < r^2 \mbox{ and } |z| \in (1-\epsilon, 1+ \epsilon)\\
& \iff \left( 1+|x|^2-r^2\right)|z|-2 \left\langle z,x \right\rangle <0 \mbox{ and } |z| \in (1-\epsilon, 1+\epsilon), \end{align*}
allowing us to rewrite $g(z)=\chi_{\left( B_r(x) \right)_{1}^{\epsilon}}(z)$ in the following way:
\begin{equation}
g(z)= \chi_{N_{{\left\lbrace|x|,r\right\rbrace}}}(|z|, \left\langle z,x \right\rangle) . \chi_{(1-\epsilon, 1+\epsilon)}(|z|) =G(|z|, \left\langle z,x \right\rangle).
\end{equation}
  
 Since $\nu$ is a conical uniform measure and $g$ is a function of $|z|$ and $\left\langle z,x \right\rangle$ with $x$ in the support of $\nu$, we can apply  Theorem $\ref{momentsconical}$ to it. Namely, fix $e \in \mathbb{R}^n$, $|e|=1$. Then by Theorem $\ref{momentsconical}$ and polar decomposition for Lebesgue measure:
  \begin{align} 
  \nu ( \left( B_r(x) \right)_{1}^{\epsilon} ) &= \int G(|z|,<z,x>) d\nu(z) \nonumber
  \\ &=\int_{\RR^{n}}  G(|z|,<z,e>) d\mathcal{L}^{n}(z), \nonumber
\\&= \int_{0}^{\infty} \left( \int_{\left\lbrace |y|=\rho \right\rbrace} \chi_{B_r(e)}\left(y'\right) \chi_{(1-\epsilon,1+\epsilon)}\left(|y|\right)d\mathcal{H}^{n-1}(y) \right) d\rho, \mbox{ where } y'=\frac{y}{|y|}, \nonumber
\\&= \int_{0}^{\infty} \rho^{n-1} \int_{S^{n-1}} \chi_{B_{r}(e)}(y') \chi_{(1-\epsilon, 1+\epsilon)}(\rho) d\sigma^{n-1}(y') d\rho
\\&=\left(\int_{1-\epsilon}^{1+\epsilon} {\rho}^{n-1}  d\rho \right)\left(\mathcal{H}^{n-1}(B_r(e) \cap S^{n-1})\right),\nonumber
\\ &= \frac{(1+\epsilon)^{n}- (1-\epsilon)^{n}}{n} \left( \mathcal{H}^{n-1}(B_r(e) \cap S^{n-1})\right). \nonumber
   \end{align}
  where $\mathcal{L}^{n}$ is $n$-Lebesgue measure. 
Dividing by $2 \epsilon$ and letting $\epsilon$ go to $0$ gives:
\begin{equation} \label{Lebesgue}\mathcal{H}^{n-1}(B_r(e) \cap S^{n-1}) = \lim_{\epsilon \to 0} \frac{1}{2\epsilon}  \nu ( \left( B_r(x) \right)_{1}^{\epsilon} ). 
\end{equation}

  Note that $\mathcal{H}^{n-1}(B_r(e) \cap S^{n-1}) $ does not depend on our choice of $e$.
  
  On the other hand, by Theorem $\ref{coarea}$ and $\eqref{Hausdorffdilation}$, we get:
  \begin{align}
  \nu(\left( B_r(x) \right)_{1}^{\epsilon}) &= \int_{0}^{\infty} \left( \int_{\Sigma \cap \left\lbrace |y|=\rho \right\rbrace} \nu_{B_r(x)}\left(\frac{y}{|y|}\right) \nu_{(1-\epsilon,1+\epsilon)}(|y|)d\mathcal{H}^{n-1}(y) \right) d\rho, \nonumber
  \\&=\int_{1-\epsilon}^{1+\epsilon} \mathcal{H}^{n-1}(\rho B_r(x) \cap \Sigma) d\rho, \nonumber
  \\&= \int_{1-\epsilon}^{1+\epsilon} {\rho}^{n-1} \mathcal{H}^{n-1}(B_r(x) \cap \Sigma) d\rho, 
  \\&=\left(\int_{1-\epsilon}^{1+\epsilon} {\rho}^{n-1}  d\rho \right) \left(\mathcal{H}^{n-1}(B_r(x) \cap \Sigma) \right), \nonumber
  \\&=\frac{(1+\epsilon)^{n}- (1-\epsilon)^{n}}{n} \left(\mathcal{H}^{n-1}(B_r(x) \cap \Sigma) \right). \nonumber
  \end{align} 
   Dividing by $2 \epsilon$ and letting  $\epsilon$ go to $0$ gives:
   \begin{equation} \label{notLebesgue} 
   \mathcal{H}^{n-1}(B_r(x) \cap \Sigma)  =  \lim_{\epsilon \to 0} \frac{1}{2\epsilon}  \nu ( \left( B_r(x) \right)_{1}^{\epsilon} ).
   \end{equation}
   Combining $\eqref{Lebesgue}$ and $\eqref{notLebesgue}$, we get:
   \begin{equation} \label{sphericalunifdist}
    \sigma(B_r(x))=\mathcal{H}^{n-1}(B_r(x) \cap \Omega)=\mathcal{H}^{n-1}(B_r(e) \cap S^{n-1}), \text{for any } x \in \Omega, \text{ and any } e \in S^{n-1}.
    \end{equation}

  In particular, this implies that $\sigma$ is uniformly distributed. \end{proof}
  One notable consequence of the above, expressed in the following corollary, is that for $n=3$, the spherical component is in fact locally $2$-uniform.
  \begin{corollary}\label{3unif} Suppose $\nu$ a $3$-uniform conical measure on $\RR^d$. Let $\sigma$ be its spherical component, and denote the support of $\sigma$ by $\Omega$. Then there exists a function $\phi: \RR_{+} \rightarrow \RR_{+} $ such that, for all $x \in  \Omega$, for all $r>0$:
  \begin{equation}\label{3unif1} \sigma(B(x,r))=\phi(r). 
  \end{equation} 
Moreover,   
  \begin{equation} \label{3unif2}
  \phi(r)=\pi r^{2} \chi_{(0,2)}(r)+4 \pi \chi_{2,\infty}(r). 
  \end{equation} \end{corollary}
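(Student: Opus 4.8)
The statement is really two claims: first that $\sigma$ is uniformly distributed on $\Omega$ (which we already have from Theorem~\ref{spherunifdist}), and second the explicit computation of the distribution function $\phi(r)$ when $n=3$. So the plan is to reduce everything to a single model computation on the round $2$-sphere $S^2 \subset \RR^3$. By \eqref{sphericalunifdist} applied with $n=3$, for any $x \in \Omega$ and any $e \in S^{2}$ we have $\sigma(B_r(x)) = \mathcal{H}^{2}(B_r(e) \cap S^{2})$, where $B_r(e) = \{z \in S^{2} : |z - e| < r\}$ is a Euclidean ball of radius $r$ centered at a point of the unit sphere, intersected with that sphere. The right-hand side manifestly does not depend on $x$ or on $e$, so it defines the function $\phi(r)$; it remains only to evaluate this spherical cap area as a function of the chordal radius $r$.

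First I would set up the spherical cap computation. A point $z \in S^{2}$ lies in $B_r(e)$ iff $|z - e|^2 < r^2$, i.e. $2 - 2\langle z, e\rangle < r^2$, i.e. $\langle z, e \rangle > 1 - r^2/2$. Writing $t = \langle z, e \rangle \in [-1,1]$ for the height coordinate, the cap is $\{t > 1 - r^2/2\}$. This is nonempty iff $1 - r^2/2 < 1$, always true for $r > 0$, and it is the whole sphere once $1 - r^2/2 \le -1$, i.e. $r \ge 2$ (the chordal diameter of $S^2$ is $2$). For $0 < r < 2$, using the standard fact that the surface-area measure of the slab $\{a < t < b\}$ on $S^2$ equals $2\pi(b-a)$ (Archimedes' hat-box theorem, a special case of the area formula), the cap $\{1 - r^2/2 < t \le 1\}$ has area $2\pi\bigl(1 - (1 - r^2/2)\bigr) = \pi r^2$. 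For $r \ge 2$ the cap is all of $S^2$, with area $4\pi$. Hence $\phi(r) = \pi r^2$ for $r \in (0,2)$ and $\phi(r) = 4\pi$ for $r \ge 2$, which is exactly \eqref{3unif2}; note the two formulas agree at $r = 2$, so the placement of the endpoint in the characteristic functions is harmless.

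To make the slab-area fact self-contained one can either quote Archimedes' hat-box theorem directly or derive it in one line from the area formula: parametrize $S^2 \setminus \{\pm e\}$ by height $t$ and angle $\theta$, so a point is $(\sqrt{1-t^2}\cos\theta, \sqrt{1-t^2}\sin\theta, t)$ after choosing coordinates with $e$ the north pole; the Jacobian of this map works out to be identically $1$, so $\mathcal{H}^2$ restricted to $S^2$ pushes forward to $dt\, d\theta$ on $[-1,1] \times [0,2\pi)$, and integrating over $\{1 - r^2/2 < t \le 1\} \times [0,2\pi)$ gives $2\pi \cdot r^2/2 = \pi r^2$. Alternatively, and perhaps cleaner given what is already in the paper, one can avoid any new computation by appealing to Corollary~\ref{supportrect}: Lebesgue measure $\mathcal{L}^3$ is $3$-uniform with constant $\omega_3 = \tfrac{4}{3}\pi$, so $\mathcal{L}^3(B(0,r)) = \tfrac{4}{3}\pi r^3$, and its spherical component in the sense of this section is $\mathcal{H}^2 \res S^2$; applying the polar decomposition Theorem~\ref{polar} to $g = \chi_{B(0,r)}$ for $r < 1$ gives $\tfrac{4}{3}\pi r^3 = \int_0^r \rho^2 \mathcal{H}^2(S^2)\, d\rho = \tfrac{1}{3}\mathcal{H}^2(S^2) r^3$, recovering $\mathcal{H}^2(S^2) = 4\pi$, and then the cap area $\pi r^2$ follows by the same slicing argument applied to $\mathcal{L}^3$ restricted to the cone over a cap.

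\textbf{Main obstacle.} There is no serious obstacle here: the hard analytic content is already done in Theorem~\ref{spherunifdist}, and what remains is the elementary evaluation of a spherical cap area. The only points requiring a little care are (i) correctly converting the chordal radius $r$ appearing in the definition of $B_r(x) \subset S^{d-1}$ into the height cutoff $1 - r^2/2$, and in particular noting that the threshold $r = 2$ (not $r = 1$) is where the cap saturates the whole sphere; and (ii) checking that the two branches of $\phi$ agree at $r = 2$ so that \eqref{3unif2} is unambiguous. Both are routine.
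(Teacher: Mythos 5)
Your proposal is correct, and the overall structure is the same as the paper's: both reduce \eqref{3unif1} to Theorem \ref{spherunifdist} and then evaluate the spherical-cap area $\mathcal{H}^2(B_r(e)\cap S^2)$ explicitly. The difference is in how that cap area is computed. The paper parametrizes the cap as a graph $f(x,y)=\sqrt{1-(x^2+y^2)}$, integrates directly for $r<\sqrt{2}$, and then for $\sqrt{2}<r<2$ passes to the complementary cap via a Pythagorean relation $r'^2=4-r^2$ and uses $\mathcal{H}^2(S^2)=4\pi$. You instead pass to the height coordinate $t=\langle z,e\rangle$, identify $B_r(e)\cap S^2$ with the slab $\{t>1-r^2/2\}$, and invoke Archimedes' hat-box theorem (Jacobian identically $1$ in $(t,\theta)$ coordinates), which gives $\pi r^2$ uniformly on all of $(0,2)$ with no case split. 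This is genuinely cleaner: the split at $r=\sqrt{2}$ in the paper is an artifact of the graph parametrization (where $f$ fails to be single-valued below the equator), and the height parametrization dissolves it entirely. Your observation that the saturation threshold is the chordal diameter $r=2$, and that the two branches of $\phi$ agree there, is exactly the right thing to check. The secondary route you sketch via Lebesgue measure and Theorem \ref{polar} recovers $\mathcal{H}^2(S^2)=4\pi$ cleanly but, as you acknowledge, still needs the slicing argument to get the cap area itself, so the Archimedes argument remains the core of the proof either way.
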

  
  \begin{proof} $\eqref{3unif1}$ is just a reformulation of Theorem $\ref{spherunifdist}$.
    Let $e=(0,0,1)$ .
    We only need to prove that for $r<2$, we have:
  $$ \mathcal{H}^{2}(B_r(e) \cap S^{2}) =\pi r^2.$$
  First, note that $\partial B_{r}(e) \cap S^{2} = \{(x,y,z) \in \RR^3; x^2+y^2+z^2=1, x^2+y^2+(z-1)^2=r^2 \}$. If $r<\sqrt{2}$, $B_{r}(e) \cap S^{2}$ is the portion of the graph of $f(x,y)=\sqrt{1-(x^2+y^2)}$ above $z=1-\frac{r^2}{2}$. So we have, by the area formula:
  
  \begin{align*} \mathcal{H}^{2}(B(e,r) \cap S^{2}) &= \int_{0}^{2\pi} \int_{0}^{\sqrt{1-(1-\frac{r^2}{2})^2} } \sqrt{1+|\nabla{f}|^2} \rho d\rho d\theta\\
                                                                                         &= 2\pi \int_{0}^{\sqrt{1-(1-\frac{r^2}{2})^2} }  \frac{\rho}{\sqrt{1-{\rho}^2}} d\rho \\
                                                                                         &= -2 \pi \left( \sqrt{1-\left(1-\left(1-\frac{r^2}{2}\right)^2\right)}-1 \right) \\
                                                                                         &=\pi r^2. \end{align*}
                                                                                      
                                                                                      If $ \sqrt{2}<r <2$, $B(e,r)$ and $B(0,1)$ intersect in $z=1-\frac{{r}^2}{2}$.
                                                                                       Moreover, note that the part of $S^2$ below the plane $z=1-\frac{{r}^2}{2}$ is $B(-e,r')$, where, by applications of Pythagoras' theorem, we have: \begin{align*}
{r'}^2 &= 1+\left(2-\frac{{r}^2}{2}\right)^{2}- \left(\frac{r^{2}}{2}-1\right)^{2}, \\
              &=4-r^{2}.
\end{align*} 
Therefore, by symmetry (since $r' < \sqrt{2}$), we have:
\begin{align*}
\mathcal{H}^2(B(e,r) \cap {S^2}) &=\mathcal{H}^2(S^2) - \mathcal{H}^2(B(-e,r')\cap S^2), \\
&=4\pi - \pi(4-r^{2}), \\
&=\pi r^{2}. 
\end{align*}

  \end{proof}

In [KiP], Kircheim and Preiss had proved that the support of a uniformly distributed measure is an analytic variety. We will now deduce from Corollary $\ref{3unif}$ that $\Sigma$ is in fact an algebraic variety. 

Recall from $\eqref{conicalevenodd}$ that: 
$$
b_{2k}^{k} = k! b_{2k,1} \mbox{ and } b_{2k-1,1}=0,
$$
 where $$ b_{k,1}(x)= 2^{k}(I(1) (k)!)^{-1} \int \left\langle z, x \right\rangle^{k} e^{-|z|^{2}} d\mu(z) \mbox{ and } I(s)= \int e^{-s|z|^2} d\nu(z).$$

 Applying Theorem $\ref{polar}$ to $b_{k,1}$ and $I(1)$ gives:
 
 \begin{align*}
 b_{k,1}(x)& = 2^{k}(I(1) k!)^{-1} \int \left\langle z, x \right\rangle^{k} e^{-|z|^{2}} d\nu(z),\\
 &= 2^{k}(I(1) k!)^{-1}  \int_{0}^{\infty} {\rho}^{n-1} \int \left\langle \rho z' , x \right\rangle^{k} e^{-{\rho}^{2}} d\sigma(z') d\rho, \\
 &= 2^{k}(\sigma(S^{d-1})k!)^{-1}  \frac{\int_{0}^{\infty} {\rho}^{n-1+k} e^{-{\rho}^2} d\rho}{\int_{0}^{\infty} {\rho}^{n-1} e^{-{\rho}^2} d\rho} \int \left\langle  z' , x \right\rangle^{k}  d\sigma(z'), \\
   &=c(n,k)  \int \left\langle  z' , x \right\rangle^{k}  d\sigma(z'),    
 \end{align*}
 where $c(n,k)=2^{k}(\sigma(S^{d-1})k!)^{-1}  \frac{\int_{0}^{\infty} {\rho}^{n-1+k} e^{-{\rho}^2} d\rho}{\int_{0}^{\infty} {\rho}^{n-1} e^{-{\rho}^2} d\rho}$.
 Therefore: $b_{2k}^{k}(x) = k! c(n,k) \int \left\langle  z' , x \right\rangle^{k}  d\sigma(z')$.
 
We can now improve Theorem $\ref{momentsconical}$ in the case of the spherical component $\sigma$ of a conical $n$-uniform measure $\nu$: indeed, in this case $\Omega$ is entirely described by its moments.
\begin{theorem} \label{algVar}  Let $\nu$ be a conical $n$-uniform measure, $\sigma$ its spherical component, $p_{2k}(x)= b_{2k}^{k}(x)$ and $p_{2k+1}(x)= b_{2k+1,1}(x)$. Moreover, let $\Omega$ be the support of $\sigma$.
 Then \begin{equation}\label{odd}
 p_{2k+1}(x)=0 \text{ for } k \geq 0, x \in \mathbb{R}^d
\end{equation}
 
 and \begin{equation} \label{even}
 \Omega = \left\lbrace x; |x|=1 \right\rbrace \cap \bigcap_{k>0} \left\lbrace x; p_{2k}(x)=|x|^{2k} \right\rbrace.
 \end{equation}
 \end{theorem}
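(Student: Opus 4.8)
\emph{Proof proposal.} The plan is to treat the two assertions separately, with essentially all the work going into one inclusion of \eqref{even}. The odd statement \eqref{odd} is immediate from \eqref{conicalevenodd}, which gives $b_{2k+1,1}=0$ identically, hence $p_{2k+1}=b_{2k+1,1}\equiv 0$. For \eqref{even}, the inclusion $\Omega\subseteq\{|x|=1\}\cap\bigcap_{k>0}\{p_{2k}(x)=|x|^{2k}\}$ is also quick: any $x\in\Omega$ has $|x|=1$ because $\Omega\subset S^{d-1}$, and $\Omega\subseteq\Sigma$ because $\sigma$ is carried by the closed set $\Sigma$, so \eqref{varietyconical} yields $p_{2k}(x)=b_{2k}^{k}(x^{2k})=|x|^{2k}$ for every $k>0$. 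The substantive step is the reverse inclusion, which I would obtain from Kirchheim--Preiss' Theorem \ref{boundedvariety} applied not to $\nu$ but to its spherical component $\sigma$.

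To set this up, note that $\sigma$ is uniformly distributed by Theorem \ref{spherunifdist} and has bounded support (inside $S^{d-1}$), so Theorem \ref{boundedvariety} applies; moreover $\Omega=\mathrm{supp}(\sigma)\ne\emptyset$, since $\sigma=0$ would force $\nu=0$ via the polar decomposition \eqref{eqpolar}. Fix an anchor point $u\in\Omega$. Then for a candidate $x$ with $|x|=1$ and $p_{2k}(x)=|x|^{2k}=1$ for all $k>0$, Theorem \ref{boundedvariety} reduces the claim $x\in\Omega$ to showing that
$$
P_k(x):=\int_{\RR^{d}}\bigl(\langle z-x,z-x\rangle^{k}-\langle z-u,z-u\rangle^{k}\bigr)\,d\sigma(z)=0\qquad\text{for every }k\in\NN.
$$

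The heart of the argument is to expand $P_k(x)$ in terms of the moments of $\sigma$ and then recognize those as the $p_{2m}$. Since $\sigma$ lives on $S^{d-1}$ one has $|z|=1$ for $\sigma$-a.e.\ $z$, and combined with $|x|=|u|=1$ the two integrands become $2^{k}(1-\langle z,x\rangle)^{k}$ and $2^{k}(1-\langle z,u\rangle)^{k}$. Expanding by the binomial theorem reduces $P_k(x)$ to a finite linear combination of the scalar quantities $\int\langle z,x\rangle^{j}\,d\sigma(z)-\int\langle z,u\rangle^{j}\,d\sigma(z)$ for $0\le j\le k$. I would then invoke the identity recorded just before the statement, which holds for every $x\in\RR^{d}$ by the polar decomposition Theorem \ref{polar}: $b_{j,1}(x)=c(n,j)\int\langle z,x\rangle^{j}\,d\sigma(z)$ with $c(n,j)>0$. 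By \eqref{conicalevenodd} the odd moments vanish; the $j=0$ term cancels trivially; and for $j=2m$ with $m\ge1$ one gets $\int\langle z,x\rangle^{2m}\,d\sigma(z)=b_{2m,1}(x)/c(n,2m)=p_{2m}(x)/\bigl(m!\,c(n,2m)\bigr)$, using $b_{2m}^{m}=m!\,b_{2m,1}$. This gives
$$
P_k(x)=2^{k}\sum_{1\le m\le k/2}\frac{1}{m!\,c(n,2m)}\binom{k}{2m}\bigl(p_{2m}(x)-p_{2m}(u)\bigr).
$$
Since $u\in\Omega$, the inclusion already proved gives $p_{2m}(u)=|u|^{2m}=1$, and by assumption $p_{2m}(x)=1$; hence every bracket vanishes, $P_k(x)=0$ for all $k$, and Theorem \ref{boundedvariety} delivers $x\in\Omega$.

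I do not foresee a real obstacle --- the proof is essentially an organized computation on top of results already established. The only places needing care are the binomial bookkeeping (confirming that all odd moments and the $j=0$ term drop out, and that each $c(n,2m)$ is finite and strictly positive so that the coefficient of every $p_{2m}(x)-p_{2m}(u)$ is identified correctly) and the routine preliminaries that $\sigma$ is a finite measure with nonempty support, so that Theorem \ref{boundedvariety} applies and the anchor point $u$ exists.
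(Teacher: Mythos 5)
Your proof is correct and follows essentially the same route as the paper: both establish the nontrivial inclusion by applying the Kirchheim--Preiss Theorem \ref{boundedvariety} to the bounded uniformly distributed measure $\sigma$, expanding $\langle z-x,z-x\rangle^{k}$ binomially (using $|z|=|x|=|u|=1$), discarding odd moments via \eqref{conicalevenodd}, and identifying the surviving even moments with the $p_{2m}$ through the formula $b_{j,1}(x)=c(n,j)\int\langle z,x\rangle^{j}\,d\sigma(z)$. The only cosmetic difference is that you factor out $2^{k}(1-\langle z,x\rangle)^{k}$ before expanding while the paper expands $\bigl((|z|^{2}+1)-2\langle z,x\rangle\bigr)^{l}$ directly, but the computation is the same.
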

 \begin{proof}
 Call $\Omega'$ the right-hand side of $\eqref{even}$. The fact that $\Omega \subset \Omega'$ and $\eqref{odd}$ follows from Theorem $\ref{momentsconical}$.
  
 To prove the other inclusion, take any $x \in \mathbb{R}^d$ such that $x \in \Omega' $.
 Let us rewrite the expressions from Theorem $\ref{boundedvariety}$.
 First, note that since $|x|=1$: \begin{align*}
 \langle z-x,z-x \rangle ^{l}& =(|z|^2+|x|^2-2 \langle z,x \rangle )^{l} \\
                                  &=\sum_{0}^{l} \binom{l}{i} (-1)^{i} (|z|^2+1)^{l-i} 2^{i} \langle z,x \rangle ^{i}.
 \end{align*}
 Moreover,using $\eqref{odd}$:
 \begin{align*}
  \int \langle z-x,z-x \rangle ^{l} d\sigma(z) 
 &= \sum_{0}^{l} \binom{l}{i} (-1)^{i}  2^{i} \int (|z|^2+1)^{l-i}\langle z,x \rangle ^{i} d\sigma(z),\\
 &=\sum_{0}^{l} \binom{l}{i} (-1)^{i}  2^{l} \int \langle z,x \rangle ^{i} d\sigma(z), \\
 &=\sum_{k; 2k \leq l} \binom{l}{2k}   2^{l} \int \langle z,x \rangle ^{2k} d\sigma(z),\\
 &=\sum_{k; 2k \leq l} \binom{l}{2k}   2^{l}  (k! c(n,2k))^{-1} p_{2k}(x)\\
 &=\sum_{k; 2k \leq l} \binom{l}{2k}   2^{l}  {k! c(n,2k)}^{-1}, \mbox{    since }p_{2k}(x)=|x|^{2k}=1\mbox{ by hypothesis}.
          \end{align*}
Note that we have proved that for all $x  \in \Omega'$,  $\int \langle z-x,z-x \rangle ^{l} d\sigma(z)  = c_{l}$ where $c_l$ does not depend on the choice of $x$. In particular, if $u \in \Omega$ is fixed as in Theorem $\ref{boundedvariety}$, since $\Omega \subset \Omega'$, we have:
\begin{equation}
\int \langle z-x,z-x \rangle ^{l} - \langle z-u,z-u \rangle ^{l} d\sigma(z)= c_{l}-c_{l} =0. 
\end{equation}
This proves that $x \in \Omega$ by Theorem $\ref{boundedvariety}$.

 \end{proof}
 As an easy consequence of the above claim, we get:
 
 \begin{corollary} \label{algvarcone}
 Let $\nu$ be a conical $n$-uniform measure in $\mathbb{R}^d$ and $\Sigma$ its support. Then $\Sigma$ is an algebraic variety and \begin{equation}
 \Sigma=-\Sigma.
 \end{equation}
 \end{corollary}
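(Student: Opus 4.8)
The plan is to identify $\Sigma$ with the ``even moment variety''
$$W := \bigcap_{k>0}\bigl\{x \in \RR^{d} : p_{2k}(x) = |x|^{2k}\bigr\},$$
where $p_{2k}(x) = b_{2k}^{k}(x^{2k})$ is as in Theorem \ref{algVar}, and then to read off both assertions from the algebraic structure of $W$. One inclusion is immediate: equation \eqref{varietyconical} of Theorem \ref{momentsconical} states exactly that $\Sigma \subseteq W$. Note that each defining function $q_{k}(x) := p_{2k}(x) - |x|^{2k}$ is a genuine polynomial in the coordinates of $x$ (the form $p_{2k}$ is the value of a symmetric $2k$-linear form on the diagonal, and $|x|^{2k} = (x_{1}^{2}+\cdots+x_{d}^{2})^{k}$), so $W$ is the common zero set of a family of polynomials; by the Hilbert basis theorem this family generates a finitely generated ideal, hence $W$ is cut out by finitely many of the $q_{k}$ and is an affine algebraic variety.

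The key observation is that each $q_{k}$ is homogeneous of degree $2k$: $p_{2k}(\lambda x) = \lambda^{2k} p_{2k}(x)$ and $|\lambda x|^{2k} = \lambda^{2k}|x|^{2k}$, whence $q_{k}(\lambda x) = \lambda^{2k} q_{k}(x)$ for every $\lambda \in \RR$. Consequently $W$ is invariant under all real dilations; in particular $W$ is a cone with vertex $0$, and $W = -W$ (using $(-1)^{2k}=1$). This already accounts for the symmetry statement once the identification $\Sigma = W$ is in place.

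For the reverse inclusion $W \subseteq \Sigma$, I would take $x \in W$. If $x = 0$, then $x \in \Sigma$ because $\Sigma$ is a nonempty cone: by the first bullet of Theorem \ref{momentsconical}, $\lambda x \in \Sigma$ whenever $x \in \Sigma$ and $\lambda > 0$, and letting $\lambda \to 0^{+}$ places $0$ in the closed set $\Sigma$. If $x \neq 0$, then $x/|x| \in W$ by the homogeneity just noted, so $x/|x| \in W \cap S^{d-1} = \Omega$ by Theorem \ref{algVar}, and $\Omega \subseteq \Sigma$; applying conicality of $\Sigma$ once more gives $x = |x|\,(x/|x|) \in \Sigma$. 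Hence $\Sigma = W$, an algebraic variety, and $\Sigma = W = -W = -\Sigma$.

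There is essentially no serious obstacle: the corollary is bookkeeping on top of Theorem \ref{algVar} and Theorem \ref{momentsconical}. The only point requiring a moment's care is recognizing that the defining relations $p_{2k}(x) = |x|^{2k}$ are homogeneous of degree $2k$ — this is what makes $W$ automatically a cone symmetric about the origin and lets one pass membership in $\Omega$ back and forth between $x$ and $x/|x|$.
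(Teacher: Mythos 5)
Your proposal is correct and takes essentially the same approach as the paper: both identify $\Sigma$ with the even-moment variety $\Sigma' = \bigcap_{k>0}\{b_{2k}^{k}(x)=|x|^{2k}\}$, derive the reverse inclusion by scaling down to $\Omega$ and invoking Theorem \ref{algVar}, and then read off $\Sigma=-\Sigma$ from the fact that each defining polynomial is homogeneous of even degree. The only cosmetic differences are that you argue $W\subseteq\Sigma$ directly where the paper argues by contrapositive, and you add the (correct but optional) Hilbert basis remark that finitely many $q_k$ suffice.
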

 \begin{proof}
 Let $\Sigma '$ be $\bigcap_{k>0} \left\lbrace b^{k}_{2k}(x) =|x|^{2k} \right\rbrace$. By $(\ref{varietyconical})$, $\Sigma \subset \Sigma'$.
 Now suppose $x \notin \Sigma$. Then $x \neq 0$ and $\frac{x}{|x|} \notin \Omega$, where $\Omega$ is the support of the spherical component of $\nu$. By Theorem $\ref{algVar}$, there exists $k_0$ such that : $p_{2k_0}(\frac{x}{|x|})\neq 1$.
 
 Multiplying by $|x|^{2k_0}$, we get:
 $ b^{k}_{2k}(x) \neq |x|^{2k_0},$ and hence $x \notin \Sigma'$.

By Theorem $\ref{momentsconical}$, for a conical measure, $b_{2k}^{k}(x)=b_{2k,1}(x)$ which is a homogeneous polynomial of even degree. In particular
\begin{align*} x \in \Sigma & \iff b^{k}_{2k}(x) =|x|^{2k}, \mbox{ }k \in \NN \\
 & \iff b^{k}_{2k}(-x) =|-x|^{2k},\mbox{ } k \in \NN \\ & \iff -x \in \Sigma.
  \end{align*}
Hence \begin{equation} \label{coneeven}
\Sigma = -\Sigma.
\end{equation}
\end{proof}

 We will apply Theorem $\ref{PTT1}$ to deduce that $\Omega$ is a $C^{1,\beta}$ manifold for some $\beta >0$. We then prove that there exists $\gamma>0$ such that $\Sigma \backslash \left\lbrace0\right\rbrace$ is a $C^{1,\gamma}$-submanifold of dimension $3$ in $\mathbb{R}^d$.

 \begin{lemma}\label{SpherCompMan} Let $\nu$ be a $3$-uniform conical measure in $\mathbb{R}^d$, $\sigma$ its spherical component and $\Omega$ the support of $\sigma$.
 Then there exists $\beta >0$ such that $\Omega$ is a $C^{1,\beta}$ submanifold of dimension $2$ in $\mathbb{R}^d$. \end{lemma}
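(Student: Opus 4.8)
The plan is to deduce the lemma from Theorem \ref{PTT1}, applied with intrinsic dimension $n=2$ to the spherical component $\sigma$, whose support is $\Omega$. The only thing that needs to be checked is the hypothesis \eqref{doublingratio}: that the rescaled mass ratios $R_t(x,r)$ decay at a polynomial rate, uniformly on compact subsets of $\Omega$. Corollary \ref{3unif} makes this immediate, so the proof is short.

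First I would record that $\omega_2 = \pi^{2/2}/\Gamma(2) = \pi$, so that formula \eqref{3unif2} of Corollary \ref{3unif} reads $\sigma(B(x,r)) = \omega_2 r^2$ for every $x \in \Omega$ and every $r \in (0,2)$. Hence, for any $x \in \Omega$, any $r \in (0,1]$ and any $t \in [\tfrac12,1]$, we have $tr \le r < 2$, so that $\sigma(B(x,r)) = \omega_2 r^2$ and $\sigma(B(x,tr)) = \omega_2 t^2 r^2$, and therefore
$$R_t(x,r) = \frac{\sigma(B(x,tr))}{\sigma(B(x,r))} - t^2 = 0.$$
Thus the estimate $|R_t(x,r)| \le C_K r^{\alpha}$ required by \eqref{doublingratio} holds, on every compact $K \subset \Omega$, with $\alpha = 1$ and $C_K = 0$.

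It then remains to invoke Theorem \ref{PTT1} with $\mu = \sigma$, ambient space $\RR^{d}$, intrinsic dimension $n=2$ and exponent $\alpha = 1$. For $n=2$ the conclusion of that theorem is unconditional (no Reifenberg-flatness hypothesis is required in this range), so $\Omega = \mbox{supp}(\sigma)$ is a $C^{1,\beta}$ submanifold of dimension $2$ in $\RR^{d}$, where $\beta = \beta(1) > 0$ is the constant supplied by the theorem. This is exactly the assertion of the lemma.

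There is essentially no obstacle in the argument itself: all the work has already gone into Corollary \ref{3unif}, i.e.\ into showing that $\sigma$ is honestly $2$-uniform at small scales, and the present lemma is a bookkeeping consequence. The only mild subtlety is that the jump of $\phi$ at $r=2$ must not interfere, which is why one restricts attention to radii $r \le 1$ (any fixed radius strictly below $2$ would do as well); on that range $\sigma$ has density ratio identically $1$, and the hypotheses of Theorem \ref{PTT1} are satisfied trivially.
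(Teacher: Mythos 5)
Your proof is correct and follows the same route as the paper: verify the density-ratio condition \eqref{doublingratio} for $\sigma$ directly from Corollary \ref{3unif} (where $R_t(x,r)$ vanishes identically for $r\le 1$, $t\in[\tfrac12,1]$), then invoke Theorem \ref{PTT1} in the low-dimensional case $n=2$, where no Reifenberg-flatness hypothesis is needed. The only difference is that you spell out the normalization $\omega_2=\pi$ and the restriction to $r<2$, which the paper leaves implicit.
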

 
 \begin{proof} According to Theorem $\ref{PTT1}$, we only need to prove $\eqref{doublingratio}$ for $\sigma$. Let $r \in (0,1]$ and $t \in [\frac{1}{2},1]$ so that $tr \in (0,1]$. By Corollary $\ref{3unif}$, for any $x \in \Omega$, we have $\frac{\sigma(B(x,tr))}{\sigma(B(x,r))} = \frac{t^2 r^2}{r^2}=t^2$ implying that :
 $$|R^{2}_{t}(x,r)|=0, \text{ for } r \in (0,1], t \in [\frac{1}{2},1], \text{ and } x \in \Omega.$$
 \end{proof}
 
 \begin{theorem}\label{onesingularity} Let $\nu$ be a $3$-uniform conical measure in $\mathbb{R}^d$ and $\Sigma$ its support. Then there exists $\gamma>0$ such that $\Sigma \backslash \left\lbrace0\right\rbrace$ is a $C^{1,\gamma}$ submanifold of dimension $3$ . \end{theorem}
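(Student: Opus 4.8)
The plan is to identify $\Sigma\setminus\{0\}$ with the open cone over $\Omega := \Sigma\cap S^{d-1}$ and to check that coning off a $C^{1,\beta}$ surface in the sphere produces, away from the apex, a $C^{1,\beta}$ submanifold of dimension $3$; the exponent $\gamma$ will simply be the exponent $\beta$ supplied by Lemma~\ref{SpherCompMan}.

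First I record the cone structure. Since $\nu$ is conical, Theorem~\ref{momentsconical} gives $\lambda\Sigma=\Sigma$ for every $\lambda>0$ (apply the stated inclusion $\lambda\Sigma\subset\Sigma$ also with $1/\lambda$). Hence for $p\in\Sigma\setminus\{0\}$ we have $p/|p|\in\Sigma\cap S^{d-1}=\Omega$, and conversely $\rho x'\in\Sigma$ whenever $x'\in\Omega$ and $\rho>0$. Consequently the map
$$\Psi\colon (0,\infty)\times\Omega\longrightarrow\mathbb{R}^d,\qquad \Psi(\rho,x')=\rho x',$$
is a bijection onto $\Sigma\setminus\{0\}$ whose inverse $p\mapsto(|p|,p/|p|)$ is continuous; thus $\Psi$ is a homeomorphism onto $\Sigma\setminus\{0\}$ with the subspace topology, and in particular it carries relatively open sets to relatively open subsets of $\Sigma\setminus\{0\}$.

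Next I upgrade $\Psi$ to a $C^{1,\beta}$ atlas. Fix $x_0'\in\Omega$ and $\rho_0>0$. By Lemma~\ref{SpherCompMan}, in a neighborhood of $x_0'$ the set $\Omega$ is the image of a $C^{1,\beta}$ embedding $g\colon U\to\mathbb{R}^d$, where $U\subset\mathbb{R}^2$ is open, $g(0)=x_0'$, and the image of $dg_0$ is $T_{x_0'}\Omega$. Then a neighborhood of $\rho_0x_0'$ in $\Sigma\setminus\{0\}$ is the image of
$$\Psi_0\colon (0,\infty)\times U\longrightarrow\mathbb{R}^d,\qquad \Psi_0(\rho,y)=\rho\,g(y).$$
We have $\partial_\rho\Psi_0=g(y)$ and $\partial_{y_i}\Psi_0=\rho\,\partial_{y_i}g(y)$; the first is $\beta$-Hölder in $y$ and constant in $\rho$, the second is $\beta$-Hölder in $y$ (a component of the $\beta$-Hölder map $dg$) and linear in $\rho$, so all first-order partials of $\Psi_0$ are $\beta$-Hölder on compact subsets of $(0,\infty)\times U$; hence $\Psi_0$ is $C^{1,\beta}$. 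Its differential at $(\rho_0,0)$ sends $\partial_\rho$ to $x_0'$ and $T_0U$ onto $\rho_0\,T_{x_0'}\Omega$; since $T_{x_0'}\Omega\subset (x_0')^{\perp}$ and $x_0'\neq 0$, these span a $3$-dimensional subspace, so $d(\Psi_0)_{(\rho_0,0)}$ is injective, and it stays injective nearby by continuity. Shrinking $U$ and restricting $\rho$ to a small interval around $\rho_0$, $\Psi_0$ is an injective $C^{1,\beta}$ immersion which, by the homeomorphism property of $\Psi$ from the previous step, is a homeomorphism onto a relatively open subset of $\Sigma\setminus\{0\}$; thus $\Psi_0$ is a $C^{1,\beta}$ embedding whose image equals $\Sigma\cap V$ for a suitable open $V\subset\mathbb{R}^d\setminus\{0\}$. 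As $x_0'\in\Omega$ and $\rho_0>0$ were arbitrary, these charts cover $\Sigma\setminus\{0\}$, proving it is a $C^{1,\beta}$ submanifold of dimension $3$ in $\mathbb{R}^d$; the theorem holds with $\gamma=\beta$.

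I do not anticipate a genuine obstacle: the geometric content is entirely Lemma~\ref{SpherCompMan} together with the elementary fact that $\Sigma$ is a cone. The only points needing a little care are that $\Psi$ is a homeomorphism for the subspace topology — so that each chart image is all of $\Sigma$ near the point, not merely a piece of it — and that the cone map has full rank $3$ everywhere; both follow at once from conicality and from $\Omega$ lying on the unit sphere, while the $C^{1,\beta}$ regularity passes through the coning map because multiplying a $C^{1,\beta}$ map by the smooth factor $\rho$ preserves $C^{1,\beta}$ regularity on sets bounded away from $\rho=0$.
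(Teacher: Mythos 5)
Your proof is correct, and its core is the same as the paper's: decompose $\Sigma\setminus\{0\}$ as a cone over $\Omega=\Sigma\cap S^{d-1}$, use Lemma~\ref{SpherCompMan} to get $C^{1,\beta}$ regularity of $\Omega$, and push that regularity up through the coning map. Where you diverge is at the last step. The paper only claims that the coning chart $\Phi((x_1,x_2),\lambda)=\lambda\Psi(x_1,x_2)$ is a $C^1$ diffeomorphism, concludes that every nonzero point of $\Sigma$ is therefore flat, and then re-applies Corollary~\ref{consPTT2} (i.e.\ the Preiss--Tolsa--Toro machinery) to recover a H\"older exponent $\gamma>0$ of unspecified size. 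You instead observe directly that all first-order partials of $\Psi_0(\rho,y)=\rho\,g(y)$ are $\beta$-H\"older on compact subsets of $(0,\infty)\times U$, so the coning chart is itself $C^{1,\beta}$; combined with your rank-$3$ and homeomorphism checks this gives the conclusion immediately with the explicit exponent $\gamma=\beta$. Your route is slightly more economical: it avoids the second invocation of Theorem~\ref{PTT2}, and it makes transparent that $\gamma$ can be taken equal to the exponent produced by Lemma~\ref{SpherCompMan}, whereas the paper's argument leaves $\gamma$ opaque. The only trade-off is that you must verify by hand the small calculus fact that multiplication by the linear factor $\rho$ preserves $C^{1,\beta}$ on compacta — a check you carry out correctly. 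Both arguments are valid; yours is tighter.
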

 
  \begin{proof} 
By dilation invariance of $\Sigma$, it is enough to prove that $\Sigma$ is a $C^{1,\gamma}$-manifold in a neigborhood of $x_0 \in \Omega$.
  Let $\sigma$ be the spherical component of $\nu$. By Lemma $\ref{SpherCompMan}$, $\Omega$ is a $C^{1,\beta}$ submanifold of dimension $2$ in $\mathbb{R}^d$. In particular, fix $x_0 \in \Omega$. There exists a $2$-subspace $P_{x_0}$ of ${\RR}^d$ tangent to $\Omega$ at $x_0$. Let $\left\lbrace \tau_1, \tau_2 \right\rbrace$ be an orthonormal basis of $P_{x_0}$.
 Since $\Omega \subset S^{d-1}$, $T_{x_0} \Omega \subset T_{x_0} S^{d-1}$ and hence, $x_0 \perp \tau_{i}$, for $i=1,2$. Therefore, letting $\tau_3 = x_0$, we can complete the orthonormal set ${\left\lbrace\tau_i \right\rbrace}_{i=1}^{3}$ to an orthonormal basis ${\left\lbrace \tau_i\right\rbrace}_{i=1}^{d}$. 
 
 Since $\Omega$ is a $C^{1,\beta}$ submanifold of dimension $2$, there exists a neighborhood $U_0$ of $x_0$ such that $\Omega \cap U_{0}$ can be written as a $C^{1,\beta}$ graph over $P_{x_0}$. More specifically, there exist $d-2$ $C^{1,\beta}$ functions $\psi_i$ on a neighborhood $G$ of $(0,0)$ in $\RR^{2}$ such that $\psi_1(0,0)=1$,  $\psi_{i}(0,0)=0$ for $i>1$ and: 
  \begin{equation}
 \Omega \cap U_0=\left\lbrace x_1 \tau_1 + x_2 \tau_2 + \sum_{i=1}^{d-2} \psi_{i}(x_1,x_2) \tau_{i+2}; (x_1,x_2) \in G \right\rbrace
 \end{equation}
 Denote by $\Psi: G \rightarrow {\Omega \cap U_0}$ the $C^{1,\beta}$ diffeomorphism $\Psi(x_1,x_2)= x_1 \tau_1 + x_2 \tau_2 + \sum_{i=1}^{d-2} \psi_{i}(x_1,x_2) \tau_{i+2}$.
 Let $U=U_{0} \cap S^{d-1}$, and $V=U_{1}^{\epsilon}$ (where $U_{1}^{\epsilon}$ is defined as in ($\ref{polarnhood}$)) for some $\epsilon <1$. $V$ is an open neighborhood of $x_0$ and: \begin{align}
 y \in \Sigma \cap V & \iff y= \lambda y_0 , \text{ where } y_0 \in \Omega \cap  U_{0}, { } \lambda \in (1-\epsilon, 1+\epsilon), \\ & \iff y = \lambda \Psi(x_1,x_2), \text{ where } \lambda \in (1-\epsilon, 1+ \epsilon), \text{ } (x_1,x_2) \in {\RR}^{2}. \end{align}
 
 Letting $\Phi: G \times (1-\epsilon, 1+\epsilon) \rightarrow {\RR}^d$ be defined as $\Phi((x_1,x_2),\lambda)=\lambda \Psi(x_1,x_2)$, we see that $\Phi$ is a $C^{1}$ diffeomorphism  on $G \times (1-\epsilon, 1+\epsilon)$ and :
 $$\Phi(G\times (1-\epsilon,1+\epsilon))=\Sigma \cap V. $$
Hence in the neighborhood of every non-zero point, $\Sigma$ is a $C^{1}$ manifold.
Consequently, every non-zero point of $\Sigma$ is flat.  Another application of Theorem $\ref{consPTT2}$  provides us with a $\gamma >0$ such that $\Sigma$ is a $C^{1,\gamma}$ submanifold of dimension 3 in a neighborhood of every non-zero point.

 \end{proof}
 
 We obtain the following corollary as a consequence of Theorem $\ref{onesingularity}$.
 
 \begin{corollary}\label{tangentsof3uniform}
 Let $\mu$ be a $3$-uniform  measure in $\RR^{d}$. If $x_0 \in \mbox{supp}(\mu)$, and $\mbox{Tan}(\mu,x_0)=\left\lbrace c \nu , c>0 \right\rbrace$, where $\nu$ is normalized so that $\nu(B(0,1))=\omega_{n}$, one of the following statements hold:
 
 \begin{itemize}
 \item $\nu=\mathcal{H}^{3} \res V_{x_0}$ where $V_{x_0}$ is a $3$-dimensional subspace.
 \item The support of $\nu$ is not a plane, and for all $z_0 \in \mbox{supp}(\nu)$, $z_0 \neq 0$ we have $$\mbox{Tan}(\nu,z_0)=\left\lbrace c \mathcal{H}^{3} \res V_{z_0}, c>0 \right\rbrace$$ where $V_{z_0}$ is a $3$-dimensional subspace.
 \end{itemize}
 \end{corollary}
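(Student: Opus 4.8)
The plan is to recognize $\nu$ as a conical $3$-uniform measure and then read the structure of $\mathrm{Tan}(\mu,x_0)$ off of Theorem \ref{onesingularity}. First, applying Theorem \ref{uniqueness} to $\mu$ at $x_0$ gives $\mathrm{Tan}(\mu,x_0)=\{c\lambda_{x_0}:c>0\}$ for a conical $3$-uniform measure $\lambda_{x_0}$; comparing with the hypothesis $\mathrm{Tan}(\mu,x_0)=\{c\nu:c>0\}$ forces $\nu=c_0\lambda_{x_0}$ for some $c_0>0$. Since positive scalar multiples of conical $n$-uniform measures are again conical $n$-uniform, $\nu$ is itself a conical $3$-uniform measure, so all the results of Section 3 apply to it. Write $\Sigma=\mathrm{supp}(\nu)$; then $0\in\Sigma$, $\Sigma=\rho\Sigma$ for every $\rho>0$ (Theorem \ref{momentsconical}), and by Corollary \ref{supportrect} together with the normalization $\nu(B(0,1))=\omega_3$ one gets $\nu=\mathcal{H}^{3}\res\Sigma$. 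By Theorem \ref{onesingularity} there is $\gamma>0$ such that $\Sigma\setminus\{0\}$ is a $C^{1,\gamma}$ submanifold of dimension $3$.

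Now split according to whether $\Sigma$ is an affine $3$-plane. If it is, then $\Sigma$ is $3$-dimensional (e.g.\ by Theorem \ref{onesingularity}) and, being dilation invariant with $0\in\Sigma$, it is a linear subspace $V_{x_0}$; hence $\nu=\mathcal{H}^{3}\res V_{x_0}$, which is the first alternative.

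If $\Sigma$ is not a plane, fix $z_0\in\Sigma$ with $z_0\neq 0$; I claim $z_0$ is a flat point of $\nu$, which is exactly the second alternative. By Theorem \ref{uniqueness} applied to $\nu$ at $z_0$, $\mathrm{Tan}(\nu,z_0)=\{c\lambda_{z_0}:c>0\}$ for a conical $3$-uniform measure $\lambda_{z_0}$, and for a suitable sequence $r_j\downarrow 0$ we have $r_j^{-3}T_{z_0,r_j}[\nu]\rightharpoonup\lambda_{z_0}$. These rescaled measures are $3$-uniform with a fixed constant, and their supports are the rescaled sets $r_j^{-1}(\Sigma-z_0)$. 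Because $\Sigma$ is a $C^{1,\gamma}$ submanifold of dimension $3$ in a neighborhood of $z_0$, these rescaled sets converge, locally in Hausdorff distance, to the tangent space $V_{z_0}:=T_{z_0}\Sigma$; feeding this into Theorem \ref{weakconv2} (both directions of the support comparison) yields $\mathrm{supp}(\lambda_{z_0})=V_{z_0}$. Since $\lambda_{z_0}$ is a nonzero $3$-uniform measure whose support is a $3$-plane, Corollary \ref{supportrect} gives $\lambda_{z_0}=c\,\omega_3^{-1}\mathcal{H}^{3}\res V_{z_0}$, so $\lambda_{z_0}$ is flat; moreover $V_{z_0}$ passes through $0$, being the support of a conical measure, and is therefore a $3$-dimensional subspace. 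Consequently $\mathrm{Tan}(\nu,z_0)=\{c\lambda_{z_0}:c>0\}=\{c\,\mathcal{H}^{3}\res V_{z_0}:c>0\}$, as desired.

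The two cases are exhaustive, so this completes the argument. The step I expect to require the most care is the identification of the blow-ups of $\Sigma$ at $z_0$ with the tangent plane $V_{z_0}$: one should use the explicit $C^{1,\gamma}$ graph description of $\Sigma$ near $z_0$ produced in the proof of Theorem \ref{onesingularity} to verify that $r_j^{-1}(\Sigma-z_0)\to V_{z_0}$ in the local Hausdorff sense, and only then invoke Theorem \ref{weakconv2}; the remaining work is bookkeeping of normalization constants.
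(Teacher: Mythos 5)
Your proof is correct and follows essentially the same approach as the paper: apply Theorem \ref{uniqueness} to recognize $\nu$ as a conical $3$-uniform measure, use Theorem \ref{onesingularity} to obtain that $\mathrm{supp}(\nu)\setminus\{0\}$ is a $C^{1,\gamma}$ $3$-manifold, and conclude via Corollary \ref{supportrect}. You are merely more explicit than the paper in the final step, spelling out via Theorem \ref{weakconv2} that every nonzero point of a $C^{1,\gamma}$ $3$-submanifold is a flat point of $\nu$, which the paper's proof leaves implicit.
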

 \begin{proof}
 By Theorem $\ref{uniqueness}$, if $x_0 \in \mbox{supp}(\mu)$, there exists a unique conical $3$-uniform measure $\nu$ such that $\mbox{Tan}(\mu,x_0)=\left\lbrace c \nu , c>0 \right\rbrace $.
But by Theorem $\ref{onesingularity}$, Theorem $\ref{uniqueness}$ and Corollary $\ref{supportrect}$, $\nu=c \mathcal{H}^{3} \res (\mbox{supp}(\nu))$ where $\mbox{supp}(\nu)$ is a $3$-dimensional subspace or a $C^{1,\alpha}$-manifold away from $0$. \end{proof}

\section{The general case: dimension reduction of singular sets}

In this section, we will use the base case to deduce the Hausdorff dimension of the singular set of any $n$-uniform measure. We first prove a theorem about the convergence of the set of singularities of a sequence of blowups. 
Once this theorem is proven, we will have all the tools we need to apply a dimension reduction argument using the base case.

Let us start with some notations.
The measure $\mu_{x ,r}$ is defined as:
\begin{equation}
\mu_{x ,r}(A)= \omega_{n} (\mu(B(x ,r)))^{-1} \mu(r A+x),
\end{equation}
for $A \subset \RR^{d}$.
In particular if $\mu$ is $n$-uniform and $z \in \mbox{supp}(\mu)$, then it follows from Theorem $\ref{uniqueness}$ that  for any sequence $\eta_j \downarrow 0$  
\begin{equation}
\mu_{z , \eta_{j}} \rightharpoonup \mu^{z},
\end{equation}
where $\mu^{z}$ is the normalized tangent measure at $z$ as defined in Definition $\ref{normalized}$.

The following fact, which is a direct consequence of the definition of the functional $F$ from Definition $\ref{functional}$, will be used often in this section: if $\Phi$ is a flat measure, then $F(\Phi)=0$ and $F(\Phi_{0,C})=0$ for any $C>0$.

Recall Definition $\ref{flat}$.
\begin{definition}
Let $\mu$ be an $n$-uniform measure in $\RR^{d}$.
If $x_0$ is a non-flat point of \mbox{supp}($\mu$), we call it a singularity of $\mu$.
We denote by $\mathcal{S}_{\mu}$ the set of singularities of $\mu$, namely:
$$\mathcal{S}_{\mu}= \left\lbrace x \in \mbox{supp}(\mu), x \mbox{ is not a flat point } \right\rbrace.$$
\end{definition}
We start with a lemma which states that under the appropriate conditions, blow-ups along the same sequence of points satisfy some sort of connectedness property. 
\begin{lemma}\label{connectednessblowups}
Let $\mu$ be an $n$-uniform measure in $\RR^{d}$, $\left\lbrace x_k \right\rbrace_{k} \subset \mbox{supp}(\mu)$ and $\left\lbrace \tau_{k} \right\rbrace_k$, $\left\lbrace \sigma_{k} \right\rbrace_{k}$ sequences of positive numbers decreasing to $0$.
We also assume that $\sigma_k < \tau_k$ and that there exist $n$-uniform measures $\alpha$ and $\beta$ such that:
$$\mu_{x_k,\tau_k} \rightharpoonup \alpha \mbox{ and } \mu_{x_k,\sigma_k} \rightharpoonup \beta.$$
Then:
$$F(\alpha) < \epsilon_{0} \implies F(\beta) < \epsilon_{0},$$
where $F$ is the functional from Definition $\ref{functional}$ and $\epsilon_0$ is the constant from $\ref{functionalflatness} $. \end{lemma}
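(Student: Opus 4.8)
I would argue by contradiction, pass to a renormalized blow-up at a \emph{critical scale} where the limit sits exactly at the flatness threshold, and then invoke Preiss' flatness-at-infinity dichotomy (Corollary~\ref{functionalflatness}).

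First some bookkeeping. Since $\mu$ is $n$-uniform one checks directly from the definitions that $\mu_{x_k,\sigma_k}=(\mu_{x_k,\tau_k})_{0,\rho_k}$ with $\rho_k=\sigma_k/\tau_k\in(0,1)$, and that $\mu_{x_k,\tau_k}(B(0,1))=\omega_n$ with $0\in\mathrm{supp}(\mu_{x_k,\tau_k})$; hence (by Theorem~\ref{weakconv2} and the basic facts on weak limits of $n$-uniform measures) $\alpha$ and $\beta$ are $n$-uniform with constant $\omega_n$ and contain $0$ in their supports. For a fixed $x\in\mathrm{supp}(\mu)$ the map $r\mapsto\mu_{x,r}$ is weakly continuous, so by Lemma~\ref{contfunctional} the function $r\mapsto F(\mu_{x,r})$ is continuous on $(0,\infty)$. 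I will also use repeatedly the conical dichotomy contained in Corollary~\ref{functionalflatness}: if $\eta$ is a conical $n$-uniform measure then either $F(\eta)=0$ ($\eta$ flat) or $F(\eta)>\epsilon_0$; in particular $F(\eta)$ never equals $\epsilon_0$.

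Now suppose $F(\alpha)<\epsilon_0$ but $F(\beta)\ge\epsilon_0$, and treat first the case $F(\beta)>\epsilon_0$. By continuity and Lemma~\ref{contfunctional}, for all large $k$ we have $F(\mu_{x_k,\tau_k})<\epsilon_0<F(\mu_{x_k,\sigma_k})$; set $r_k:=\sup\{\,r\in[\sigma_k,\tau_k]:F(\mu_{x_k,r})\ge\epsilon_0\,\}$. Then $\sigma_k\le r_k<\tau_k$, $F(\mu_{x_k,r_k})=\epsilon_0$, and $F(\mu_{x_k,r})<\epsilon_0$ for every $r\in(r_k,\tau_k]$. Put $\nu_k:=\mu_{x_k,r_k}$; these are $n$-uniform with constant $\omega_n$, $0\in\mathrm{supp}(\nu_k)$, and $F(\nu_k)=\epsilon_0$. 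By Theorem~\ref{weakconv} extract $\nu_k\rightharpoonup\nu$ along a subsequence; then $\nu$ is $n$-uniform, $0\in\mathrm{supp}(\nu)$, and $F(\nu)=\epsilon_0$ by Lemma~\ref{contfunctional}. Since $\mu_{x_k,\tau_k}=(\nu_k)_{0,t_k}$ with $t_k:=\tau_k/r_k>1$, pass to a further subsequence with $t_k\to t\in(1,\infty]$. If $t=\infty$, a diagonal argument (stability of weak convergence under blow-ups with scales tending to $\infty$, via the metric $\mathcal F$ and Theorem~\ref{uniqueness}) gives $(\nu_k)_{0,t_k}\rightharpoonup\lambda^\nu_\infty$, the normalized tangent of $\nu$ at $\infty$; hence $\alpha=\lambda^\nu_\infty$ and $F(\lambda^\nu_\infty)=F(\alpha)<\epsilon_0$, so by Corollary~\ref{functionalflatness} $\nu$ is flat and $F(\nu)=0$, contradicting $F(\nu)=\epsilon_0$.

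It remains to handle $t<\infty$ (and, afterwards, the borderline $F(\beta)=\epsilon_0$, which is dealt with by applying the conical dichotomy to the tangent of $\beta$ at $0$ and rerunning the argument with a threshold strictly between $F(\alpha)$ and $\min(F(\beta),F(\lambda^\beta_0))$). In the case $t<\infty$ one gets $\alpha=\nu_{0,t}$ and, for every $s\in(1,t]$, $sr_k\in(r_k,\tau_k]$ for large $k$, whence $F(\nu_{0,s})=\lim_kF(\mu_{x_k,sr_k})\le\epsilon_0$; together with $F(\nu)=\epsilon_0$ this says $F(\nu_{0,s})\le\epsilon_0$ for all $s\in[1,t]$. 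This is the delicate point: soft compactness produces a limit $\nu$ with $F(\nu)=\epsilon_0$ but cannot by itself exclude a \emph{bounded window of near-flatness}. The remedy is to observe that when $\sup_k\tau_k/\sigma_k<\infty$ the lemma reduces to the constant-center statement ``$F(\eta)<\epsilon_0\Rightarrow F(\eta_{0,\rho})<\epsilon_0$ for $\rho\in(0,1]$'' (since then $\beta=\alpha_{0,\rho}$ for a subsequential limit $\rho=\lim\sigma_k/\tau_k$), and to establish that statement directly: an $n$-uniform measure that is $F$-close to flat at one scale has a flat tangent at the origin, and for such measures the algebraic description of the support (Theorems~\ref{momentsconical} and \ref{boundedvariety}) forces $r\mapsto F(\cdot_{0,r})$ to be monotone enough near $0$ to rule out the window; the complementary case $\tau_k/\sigma_k\to\infty$ is exactly the clean case above. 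I expect this $t<\infty$ / bounded-window case to be the main obstacle — it is where one genuinely needs Preiss' theorem and the structure of conical uniform measures rather than a purely topological compactness argument, whether one closes it via the constant-center reduction or via an iteration of the renormalization that pushes the range of scales with $F\le\epsilon_0$ out to $\infty$.
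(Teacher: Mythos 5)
Your skeleton is the paper's: argue by contradiction, locate via the intermediate value theorem a critical scale in $[\sigma_k,\tau_k]$ at which $F$ hits a threshold, extract a weak limit, and invoke the flatness-at-infinity dichotomy (Corollary \ref{functionalflatness}). But the decisive step is carried out incorrectly. In your $t=\infty$ case you assert that $\nu_k\rightharpoonup\nu$ together with $t_k\to\infty$ gives $(\nu_k)_{0,t_k}\rightharpoonup\lambda^\nu_\infty$ ``by a diagonal argument.'' This does not follow: $F_s\bigl((\nu_k)_{0,t_k},\nu_{0,t_k}\bigr)=t_k^{-(n+1)}F_{st_k}(\nu_k,\nu)$, and weak convergence of $\nu_k$ to $\nu$ gives no control on $F_{st_k}(\nu_k,\nu)$ as $st_k\to\infty$ --- the limit in $k$ and the limit in scale do not commute. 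The paper decouples them: fix $R>1$, use the identity $F_s(\mu_{x_k,R\delta_k},\xi_{0,R})=R^{-n-1}F_{Rs}(\mu_{x_k,\delta_k},\xi)\to0$ (here $R$ and $s$ are \emph{fixed}, so this is just $\mu_{x_k,\delta_k}\rightharpoonup\xi$) to deduce $\mu_{x_k,R\delta_k}\rightharpoonup\xi_{0,R}$ and hence $F(\xi_{0,R})\le\kappa$; only afterward send $R\to\infty$ and apply Theorem \ref{uniqueness} to reach the tangent at infinity. In particular $\alpha$ is never identified with $\lambda^\nu_\infty$, and your equation $F(\lambda^\nu_\infty)=F(\alpha)$ is unjustified.

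Secondly, thresholding exactly at $\epsilon_0$ forces the borderline case $F(\beta)=\epsilon_0$, which you dispatch with a vague parenthetical, and leaves $F(\nu)=\epsilon_0$ on the nose. The paper instead picks $\kappa$ strictly between $F(\alpha)$ and $\epsilon_0$; since $F(\beta)\ge\epsilon_0>\kappa$, both your subcases collapse into one, and the limit satisfies $F(\xi)=\kappa<\epsilon_0$, making the final contradiction ($\xi$ flat yet $F(\xi)=\kappa>0$) immediate. Finally, your $t<\infty$ case --- the ``bounded window of near-flatness'' --- is acknowledged but not closed: the proposed remedies (a monotonicity of $r\mapsto F(\cdot_{0,r})$ near $0$ extracted from the algebraic description of the support, or an iteration of the renormalization) are speculative and nowhere established. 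The paper's device is to take $\delta_k$ to be the \emph{last} scale in $[\sigma_k,\tau_k]$ at which $F=\kappa$, so that $F(\mu_{x_k,r})\le\kappa$ on the whole of $[\delta_k,\tau_k]$, and then to argue that $\tau_k/\delta_k\to\infty$, so the bounded window does not occur at all.
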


\begin{proof}
The proof of this lemma is similar to  Preiss' proof of Theorem 2.6  in $\cite{P}$. In particular, it follows closely the proof of Theorem 6.10 in $\cite{Del}$ which is a reformulation of Preiss' theorem for uniform measures.

Assume that  $F(\alpha)<\epsilon_{0}$ and $F(\beta) \geq \epsilon_{0}$.
Then by Theorem $\ref{contfunctional}$ there exists $0<\kappa<\epsilon_{0}$ and $k_0>0$ so that for $k>k_0$ , $$ F(\mu_{x_k , \sigma_k }) > \kappa \mbox{ and } F(\mu_{x_k , \tau_k }) < \kappa.$$
For $k>0$, define the function $f_k: (0,\infty) \rightarrow (0,\infty)$ to be 
$$f_k(r)=F(\mu_{x_k , r}).$$
If $s_j$ is a sequence of positive numbers converging to some $s_0>0$, $\mu_{x_k, s_l} \rightharpoonup \mu_{x_k, s_0}$. Therefore $f_k$
is continuous in $r$ away from $0$, for all $k>0$. 
So for every $k>k_0$, there exists $\delta_{k} \in [\sigma_{k} , \tau_{k} ]$ so that:
\begin{equation} \label{deltak}
F(\mu_{x_k , \delta_{k}}) = \kappa \mbox{ and } F(\mu_{x_k , r}) \leq \kappa \mbox{ for } r \in [\delta_k, \tau_k].
\end{equation}
By Theorem $\ref{weakconv}$, without loss of generality, by passing to a subsequence, $$\mu_{x_k , \delta_{k}} \rightharpoonup \xi $$
where $\xi$ is a Radon measure. 
We claim that $\xi$ is $n$-uniform and $\xi(B(0,1))=\omega_{n}$. Pick $y \in \mbox{supp}(\xi)$ and $R>0$.
First note that since the $\mu_{x_k , \delta_{k}}$ are $n$-uniform and all have the same constant $\omega_n$ (being normalized), we can apply Theorem $\ref{weakconv2}$ to obtain a sequence $y_{k_j}$ of points in $\mbox{supp}(\mu_{x_{k_j} , \delta_{k_j}})$ such that $y_{k_j} \to y$. Without loss of generality, by passing to a subsequence, $y_j \to y$.
Fix $\epsilon>0$. There exists $j_0$ such that: 
$$ j>j_0 \implies |y-y_j| < \frac{\epsilon}{4}.$$
On one hand, we have:
\begin{align*}
\xi(B(y,R)) & \leq \liminf \mu_{x_j , \delta_{j}}(B(y,R)), \\ & \leq \liminf \mu_{x_j , \delta_{j}}(B(y_j,R+\frac{\epsilon}{4})), \\ &=\omega_{n} \left(R+\frac{\epsilon}{4}\right)^{n}.
\end{align*}
On the other hand
\begin{align*}
\xi(B(y,R)) & \geq \limsup \mu_{x_j , \delta_{j}}(B(y,R-\frac{\epsilon}{8})), \\ & \geq \limsup \mu_{x_j , \delta_{j}}(B(y_j,R-\frac{3\epsilon}{8})), \\ &=\omega_{n} \left(R-\frac{3\epsilon}{8}\right)^{n}.
\end{align*}
Hence, for $y \in \mbox{supp}(\xi)$ and $R>0$
$$ \omega_{n} \left(R-\frac{3\epsilon}{8}\right)^{n} \leq \xi(B(y,R)) \leq \omega_{n} \left(R+\frac{\epsilon}{4}\right)^{n}.$$
Since $\epsilon$ was chosen arbitrarily, $$\xi(B(y,R))=\omega_{n} R^{n},$$ thus proving that $\xi$ is $n$-uniform.

By Theorem $\ref{contfunctional}$, $F(\xi)=\kappa$. In particular $\xi$ is not flat.
We now show that our assumptions imply that $\xi$ is flat at infinity. By Theorem $\ref{flatnessinfty}$ this is a contradiction with $\xi$ not being flat.

We first claim that $\frac{\tau_{k}}{\delta_{k}} \to \infty$.
Assume that $\frac{\tau_{k}}{\delta_{k}} \to C$, $C\geq 1$.
Letting $\beta_k = \frac{\tau_{k}}{\delta_{k}}$ and writing $ \mu_{x_k , \tau_{k}} = {\beta_{k}}^{-n} T_{0, \beta_{k}}[\mu_{x_k, \delta_{k}}]$ , we have $$\mu_{x_k , \tau_{k}} \rightharpoonup \xi_{0,C}$$ since $C \neq 0$, $\mu_{x_k, \delta_{k}} \rightharpoonup \xi$ and $\xi(B(0,C))=\omega_{n} C^{n}$.
But $\mu_{x_k , \tau_{k}} \rightharpoonup \alpha$ hence $\alpha=\xi_{0,C}$. The fact that  $\alpha$ is flat and $\xi$ is not would yield a contradiction.

Now fix $R>1$. Since $\frac{\tau_{k}}{\delta_{k}} \to \infty$, there exists $k_1>k_0$ such that for $k>k_1$, we have $$R \delta_{k} \in [\delta_{k} , \tau_{k}].$$
In particular, since $k_1>k_0$, if $k>k_1$ we also have, by $\eqref{deltak}$, $F(\mu_{x_k , R \delta_k}) \leq \kappa$. We deduce that:
$$\limsup_{k} F(\mu_{x_k , R \delta_k} ) \leq \kappa.$$ 
Since $F_{s}(\mu_{x_k , R \delta_k} , \xi_{0,R}) = R^{-n-1} F_{Rs}(\mu_{x_k , \delta_k}, \xi)$ for every $s>0$, $\lim_{k \to \infty} F_{s}(\mu_{x_k , R \delta_k}, \xi_{O,R})=0$ for every $s>0$ and hence $\mu_{x_k , R \delta_k} \rightharpoonup \xi_{0,R}$. Consequently, by Theorem $\ref{contfunctional}$:
\begin{equation}\label{infinityxi}
 F( \xi_{0,R}) \leq \kappa.
\end{equation}
Choosing  $R_l \uparrow \infty$, we have $ \xi_{O,R_l} \rightharpoonup \psi $ where $\psi$ is the normalized tangent of $\xi$ at $\infty$ by Theorem $\ref{uniqueness}$. Therefore, by $\eqref{infinityxi}$, $F(\psi) \leq \kappa < \epsilon_{0}$.
But by Theorem $\ref{functionalflatness}$ this implies  that $\xi$ is flat which contradicts $F(\xi) = \kappa$. 
\end{proof}
\begin{remark}
\begin{enumerate}
\item I would like to thank Max Engelstein for discussions about the connectedness of cones of pseudo-tangent measures along a subsequence which brought this argument to my attention.
It is worth noting that cones of pseudo-tangent sets (the same holds for measures) are $\textit{not}$ connected in general. A counter-example can be found in Remark 5.5 in $\cite{BL}$. 
 The interested reader can refer to $\cite{P}$, $\cite{KPT}$ and $\cite{BL}$ for more detailed discussions of cones of tangent measures.
 
 \item  Lemma $\ref{connectednessblowups}$ would imply that pseudo-tangent measures are in fact connected along the same subsequence if not for the condition $\sigma_k < \tau_k$. This apparently technical condition turns out to be necessary for this proof to work. Whether this is just a feature of this specific proof or an actual necessary condition is not clear to the author and seems like an interesting question in its own right.
\end{enumerate}

\end{remark}
We can now prove a useful theorem about the behavior of singular sets under blow-ups.
\begin{theorem} \label{accumsing}
 Let $\mu$ be an $n$-uniform measure in $\RR^d$, $x_0 \in \mbox{supp}(\mu)$, $\left\lbrace x_j \right\rbrace _{j} \subset \mathcal{S}_{\mu}$, $\left\lbrace r_j \right\rbrace _{j}$ any sequence of positive numbers decreasing to $0$.
Also assume that $y_j=\frac{x_j - x_0}{r_j} \in \overline{B(0,1)}$, $y_j \to y$.
Then $$y \in \mathcal{S}_{\mu^{x_0}},$$
where $\mu^{x_0}$ is the normalized tangent at $x_0$ as defined in Definition $\ref{normalized}$.
\end{theorem}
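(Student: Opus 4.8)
The plan is to argue by contradiction, using the connectedness Lemma~\ref{connectednessblowups} as the main engine. First I would record that $y\in\mbox{supp}(\mu^{x_0})$: since $x_j\in\mbox{supp}(\mu)$ we have $y_j\in\mbox{supp}(\mu_{x_0,r_j})$, and $\mu_{x_0,r_j}\rightharpoonup\mu^{x_0}$, so Theorem~\ref{weakconv2} forces $y\in\mbox{supp}(\mu^{x_0})$. Now suppose toward a contradiction that $y$ is a \emph{flat} point of $\mu^{x_0}$. Since $(\mu^{x_0})_{y,s}\rightharpoonup(\mu^{x_0})^{y}$ as $s\to0$ and the tangent $(\mu^{x_0})^{y}$ is flat, Lemma~\ref{contfunctional} gives $F\big((\mu^{x_0})_{y,s}\big)\to F\big((\mu^{x_0})^{y}\big)=0$, so we may fix $s_0\in(0,1)$ with $F\big((\mu^{x_0})_{y,s_0}\big)<\epsilon_0$, where $\epsilon_0$ is the constant of Corollary~\ref{functionalflatness}.

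The key computation is the elementary identity
\begin{equation}\nonumber
(\mu_{x_0,r_j})_{y_j,s_0}=\mu_{x_j,\,r_j s_0},
\end{equation}
obtained by unwinding the definition of the normalized blow-up together with $x_j=x_0+r_j y_j$ and the $n$-uniformity of $\mu$ (both sides equal $\omega_n\big(c(r_js_0)^n\big)^{-1}\mu(r_js_0\,\cdot\,+x_j)$). Because $y_j\to y$, $\mu_{x_0,r_j}\rightharpoonup\mu^{x_0}$, and all the $\mu_{x_0,r_j}$ are $n$-uniform with the same constant $\omega_n$ (so their mass on compacta is uniformly bounded), a routine argument — test against $f\big((\cdot-y_j)/s_0\big)$ for $f\in C_c(\RR^d)$ and use the uniform continuity of $f$ — shows $(\mu_{x_0,r_j})_{y_j,s_0}\rightharpoonup(\mu^{x_0})_{y,s_0}$. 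Combining, $\mu_{x_j,\,r_j s_0}\rightharpoonup\alpha:=(\mu^{x_0})_{y,s_0}$, which is $n$-uniform and satisfies $F(\alpha)<\epsilon_0$.

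On the other hand, each $x_j$ is a singular point of $\mu$, so by Theorem~\ref{uniqueness} the tangent $\mu^{x_j}$ is a conical $n$-uniform measure that is not flat; hence $F(\mu^{x_j})>\epsilon_0$ by Corollary~\ref{functionalflatness}. Since $\mu_{x_j,t}\rightharpoonup\mu^{x_j}$ as $t\to0$ and $F$ is weakly continuous, there is $t_j>0$ (which we may take decreasing in $j$) with $F(\mu_{x_j,t})>\epsilon_0$ for all $t\in(0,t_j]$. Put $\tau_j:=r_j s_0$, a decreasing sequence tending to $0$ because $r_j\downarrow0$, and set $\sigma_j:=\min\{t_j,\tfrac12\tau_j,\tfrac1j\}$, so that $\sigma_j\downarrow0$, $\sigma_j<\tau_j$, and $F(\mu_{x_j,\sigma_j})>\epsilon_0$. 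By Theorem~\ref{weakconv} we may pass to a subsequence along which $\mu_{x_j,\sigma_j}\rightharpoonup\beta$ for some Radon measure $\beta$; $\beta$ is $n$-uniform (a weak limit of $n$-uniform measures with constant $\omega_n$, exactly as in the proof of Lemma~\ref{connectednessblowups}) and, by Lemma~\ref{contfunctional}, $F(\beta)\ge\epsilon_0$. Along this subsequence $\mu_{x_j,\tau_j}\rightharpoonup\alpha$ still holds. Applying Lemma~\ref{connectednessblowups} to the points $\{x_j\}$ and the scales $\sigma_j<\tau_j$ gives $F(\alpha)<\epsilon_0\implies F(\beta)<\epsilon_0$, contradicting $F(\beta)\ge\epsilon_0$. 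Hence $y$ cannot be flat, i.e.\ $y\in\mathcal{S}_{\mu^{x_0}}$.

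I expect the main obstacle to be arranging the two scales so that Lemma~\ref{connectednessblowups} genuinely applies: its hypothesis $\sigma_k<\tau_k$ dictates using the "larger" scale $r_j s_0$ (at which the flatness of $\alpha$, hence of $y$, is seen) as the outer radius $\tau_j$ and the genuinely small scale $\sigma_j$ (at which the singularity of $x_j$ becomes visible) as the inner radius. The one remaining technical point is the weak-convergence statement $(\mu_{x_0,r_j})_{y_j,s_0}\rightharpoonup(\mu^{x_0})_{y,s_0}$ for blow-ups centered at moving basepoints $y_j\to y$; this is routine given the uniform mass bounds but should be stated carefully, and together with the displayed identity it is what transfers information from the tangent $\mu^{x_0}$ back to the measures $\mu_{x_j,\cdot}$.
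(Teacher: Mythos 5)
Your proof is correct and rests on the same central idea as the paper's: argue by contradiction and feed two sequences of blow-ups of $\mu$ along the $x_j$ into Lemma~\ref{connectednessblowups}, with the larger scale seeing the (assumed) flatness of $y$ in the tangent and the smaller scale seeing the non-flatness of the $x_j$. Where you differ is in how the two scales are manufactured, and your version is in fact tidier. The paper's outer scale is a genuine second blow-up: it takes $\tau_k=r_{l_k}\rho_k$ with $\rho_k=\tilde\sigma_{l_k}/r_{l_k}\to0$ so that $\mu_{\tilde x_k,\tau_k}$ converges to the \emph{tangent} of $\nu$ at $y$ (for which $F=0$), and this forces a somewhat delicate diagonal/double-subsequence argument; simultaneously the paper's inner scale $\sigma_k$ is constructed to converge specifically to $\nu^\infty=\lim\mu^{x_j}$. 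You instead freeze a finite scale $s_0$ at which $F\big((\mu^{x_0})_{y,s_0}\big)<\epsilon_0$, exploit the clean identity $(\mu_{x_0,r_j})_{y_j,s_0}=\mu_{x_j,r_js_0}$ together with the moving-basepoint convergence to get $\mu_{x_j,r_js_0}\rightharpoonup(\mu^{x_0})_{y,s_0}$, and for the inner scale you merely pick any $\sigma_j\in(0,r_js_0)$ small enough that $F(\mu_{x_j,\sigma_j})>\epsilon_0$, then pass to a weakly convergent subsequence — you do not need the limit to be $\nu^\infty$, only $F(\beta)\geq\epsilon_0$. This removes the diagonal construction entirely, at the small cost of working with $F(\alpha)<\epsilon_0$ rather than $F(\alpha)=0$, which the hypothesis of Lemma~\ref{connectednessblowups} permits. (One point worth writing out, which you flagged yourself: the convergence $(\mu_{x_0,r_j})_{y_j,s_0}\rightharpoonup(\mu^{x_0})_{y,s_0}$ does require $y\in\mbox{supp}(\mu^{x_0})$ so that $\mu^{x_0}(B(y,s_0))=\omega_ns_0^n$; you established this at the start via Theorem~\ref{weakconv2}, and then the estimate reduces to uniform continuity of the test function plus the uniform local mass bounds $\mu_{x_0,r_j}(B(0,R))=\omega_nR^n$.) Both proofs also need $\alpha$ and $\beta$ to be $n$-uniform before invoking the lemma; you handle $\alpha$ automatically (it is a blow-up of the $n$-uniform $\mu^{x_0}$ at a support point) and $\beta$ by the same limiting argument the paper uses for $\xi$ inside Lemma~\ref{connectednessblowups}.
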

\begin{proof}
Without loss of generality, $x_0 = 0$. Denote $\mu^{x_0}$ by $\nu$, and $\mu^{x_j}$ by $\nu_j$ where $\mu^{x_j}$ are the normalized tangents at $x_j$.
Let us start with some remarks.

We first claim that there exists a conical, non-flat $n$-uniform measure $\nu^{\infty}$ such that a subsequence of $\left\lbrace \nu_{j} \right\rbrace_j$ converges weakly to $\nu^{\infty}$.
Since $x_j \in \mathcal{S}_{\mu}$, $\nu^{j}$ is non-flat for every $j>0$. The fact that $\nu_{j}$ is conical and Theorem $\ref{functionalflatness}$ then imply that  $F(\nu_j) > \epsilon_{0}$ for all $j>0$ where $F$ is the functional defined in Definition $\ref{functional}$.
Moreover, since $\sup_{j}(\nu_j(B(0,R)) )= \omega_{n} R^{n}< \infty$, there exists a Radon measure $\nu^{\infty}$ and a subsequence of $\nu_j$ converging to $\nu^{\infty}$. Without loss of generality, $\nu_j \rightharpoonup \nu^{\infty}$.
Moreover, $\nu^{\infty}$ is $n$-uniform. The proof of this fact is exactly the same as the proof of the fact that $\xi$ in Lemma $\ref{connectednessblowups}$ is $n$-uniform.

By Theorem $\ref{contfunctional}$, $F(\nu_{j}) \to F(\nu^{\infty})$ and hence $F(\nu^{\infty}) \geq \epsilon_{0}$.
Moreover, since each $\nu_{j}$ is conical and $\nu_{j} \rightharpoonup \nu^{\infty}$, it follows that for any $r>0$:
\begin{align*}
T_{0,r}[ \nu^{\infty}] &= \lim T_{0,r}[\nu_{j}], \\
&=r^{n} \lim \nu_{j}, \\
&=r^{n} \nu^{\infty}.
\end{align*}
This proves that $\nu^{\infty}$ is conical.

We also claim that 
\begin{equation} \label{nuy}
y \in \mbox{supp}(\nu) \mbox{ and } \mu_{x_j , r_j} \rightharpoonup \nu_{y}. 
\end{equation} where  $\nu_{z}$ denotes $T_{z,1}[\nu]$ whenever $z \in \mbox{supp}(\nu)$. 
 Indeed, let $\delta > 0$.
Then:
\begin{align*}
\nu(B(y,\delta)) & \geq \limsup_{i \to \infty} \mu_{0,r_i}\left(B\left(y,\frac{\delta}{4}\right)\right) \\ & =\limsup_{i \to \infty} \omega_{n}(\mu(B(0,r_i)))^{-1} \mu\left(B\left(r_i y, \frac{r_i \delta}{4}\right)\right).
\end{align*}
But for $i$ large enough $|y-y_i| \leq \frac{\delta}{8}$ implying that $B(x_{i}, r_i \frac{\delta}{8}) \subset B(r_i y, \frac{r_i \delta}{4})$. Consequently, $$\nu(B(y,\delta)) >0$$ since $$(\mu(B(0,r_i)))^{-1} \mu\left(B(x_{i}, r_i \frac{\delta}{8})\right) = \frac{\delta^{n}}{8^n}.$$

Let us prove the second part of $\eqref{nuy}$.
Recall Definition $\ref{L(r)}$.

Fix $R>0$. Let $\phi \in \mathcal{L}(R)$. Then, on one hand, for $j$ large enough that $|y_j| \leq 2$, we have:
\begin{align}
\left| \int \phi(z) d\mu_{x_j , r_j}(z) - \int \phi(z) dT_{y_j ,1}[\nu](z) \right| 
& =\left| \int \phi( z-y_j) d\mu_{0,r_j}(z) - \int \phi(z-y_j) d\nu(z) \right|, \nonumber \\
&\leq F_{R+2} (\mu_{0,r_j}, \nu) , 
\end{align}
since $\phi_j(z)=\phi(z-y_j) \in \mathcal{L}(R+2)$ .
On the other hand,
\begin{align}
\left| \int \phi(z) dT_{y_j ,1}[\nu](z)- \int \phi(z) dT_{y,1}[\nu](z) \right| &= \left| \int \left(\phi(z-y_j) - \phi(z-y)\right) d\nu(z) \right| , \nonumber \\
&\leq |y-y_j| \nu(B(0,R+2)),
\end{align}
since $Lip(\phi) \leq 1$, $\phi_{j}$ and $\phi_{y}$ are supported in $B(0,R+2)$ where we define $\phi_{y}(z)=\phi(z-y)$.
This gives, taking the supremum over all $\phi \in \mathcal{L}(R)$:
$$ F_R(\mu_{x_j , r_j} , \nu_{y} ) \leq F_{R+2} (\mu_{0,r_j}, \nu)+ |y-y_j| \nu(B(0,R+2)), $$
for $j$ large enough.
Letting $j \to \infty$, we get $\eqref{nuy}$ since $R$ was chosen arbitrarily. 

Our proof will now go as follows: we construct sequences of positive numbers $\sigma_{k}$ and $\tau_{k}$ decreasing to $0$ such that $\mu_{\tilde{x}_k, \sigma_{k}}$ converges weakly to $\nu^{\infty}$ and $\mu_{\tilde{x}_k, \tau_{k}}$ converges weakly to $\alpha$ the normalized tangent measure to $\nu$ at $y$. Here, $\tilde{x}_k$ is a subsequence of $x_k$.
We then use Lemma $\ref{connectednessblowups}$ to deduce that $\alpha$ cannot be flat.

Let us first construct a decreasing sequence $\tilde{\sigma}_j$ such that 
\begin{equation}\label{sigma}
\frac{\tilde{\sigma}_{j}}{r_j} \to 0 \mbox{ and } \mu_{x_j, \tilde{\sigma}_{j}} \rightharpoonup \nu^{\infty}.
\end{equation}
Let $t_{j}=\frac{1}{j}$.
By Theorem $\ref{uniqueness}$, the blow-ups at a point  converge to the tangent along $\textbf{\textit{any}}$ sequence going to 0. Moreover this tangent is unique up to normalization. Thus, for every $k$, we have
$$\mathcal{F}(\mu_{x_k, \frac{1}{j}}, \nu_{k}) \to 0, $$
where $\mathcal{F}$ is the metric on Radon measures from Definition $\ref{L(r)}$.
Now construct inductively a decreasing sequence $\left\lbrace l_{k} \right\rbrace _{k}$
such that, for all $k>0$ 
\begin{equation} \label{convnuinfty}
 l \geq l_k \implies t_{l} < {r_{k}}^{2} \mbox{ and } \mathcal{F}(\mu_{x_{k}, t_{l}} , \nu_{k} ) < \frac{1}{2^{k}}.
\end{equation} 
Let $\tilde{\sigma}_{j} = t_{l_j}$ and $\rho_{j}= \frac{\tilde{\sigma}_{j}}{r_j}$.

We remark that since $\rho_j \downarrow 0$, \begin{equation}\label{alpha2}
\left( \nu_{y} \right) _{0, \rho_{j}} \rightharpoonup \alpha 
\end{equation}  
where $\alpha $ is the normalized tangent measure to $\nu_y$ at $0$. Equivalently, this is the normalized tangent measure to $\nu$ at $y$. Indeed, since $\nu_{y}=T_{y,1}[\nu]$ and $T_{0,\rho_{j}}\circ T_{y,1}= T_{y,\rho_j}$, we have \begin{align*}
{\rho_{j}}^{-n}T_{0,\rho_j}[\nu_{y}] &= {\rho_{j}}^{-n}T_{0,\rho_j}[ T_{y,1} [\nu]] , \\
&={\rho_{j}}^{-n} T_{y,\rho_j}[\nu].
\end{align*}

We now construct a sequence $\tilde{\tau}_{k}$ such that:
$$\mu_{{x}_{l_k} , \tilde{\tau}_k} \rightharpoonup \alpha,$$
for some subsequence $x_{l_k}$ of $x_k$.

For every $k$ there exists $l_k>k$, $l_k > l_{k-1}$ such that whenever $l>l_k$ 
\begin{equation}\label{tau}
F_{1} (\mu_{x_l , r_l} , \nu_{y}) < \frac{1}{k} {\rho}_{k}^{n+1} \mbox{ and } \rho_{l} < \rho_{k},
\end{equation} 
since $\mu_{x_l , r_l} \rightharpoonup \nu_{y}$ and $\rho_{k} \to 0$.
Let $\tilde{\tau_{k}} = r_{l_k} \rho_{k}$ and $\tilde{x}_k= x_{l_k}$.

We claim that \begin{equation}\label{alpha}
\mu_{\tilde{x}_k , \tilde{\tau}_k} \rightharpoonup \alpha.
\end{equation}
Indeed, fix $R>0$.
On one hand, for $k$ large enough that $R\rho_{k} \leq 1$ \begin{align*}
F_{R}(\mu_{\tilde{x}_k , \tilde{\tau}_k} , \rho_{k}^{-n} T_{0,\rho_{k}} [\nu_{y}]) &= F_{R}(\rho_{k}^{-n} T_{0,\rho_k}[\mu_{x_{l_k}, r_{l_k}}] , \rho_{k}^{-n} T_{0,\rho_{k}}[\nu_{y}]), \\
& =\rho_{k}^{-n-1} F_{R \rho_{k}}(\mu_{x_{l_k}, r_{l_k}} , \nu_{y}) , \\ & \leq \rho_{k}^{-n-1} F_{1}(\mu_{x_{l_k}, r_{l_k}}, \nu_{y}), \\ &< \frac{1}{k}.
\end{align*}
The laws of composition used in this calculation are explained in Lemma 2.4 of $\cite{B}$.  

On the other hand, $F_R (\rho_{k}^{-n} T_{0,\rho_{k}} [\nu_{y}], \alpha) \to 0$ by $\eqref{alpha2}$.
Since $$F_R(\mu_{\tilde{x}_k , \tilde{\tau}_k},\alpha) \leq F_{R}(\mu_{\tilde{x}_k , \tilde{\tau}_k} , \rho_{k}^{-n} T_{0,\rho_{k}} [\nu_{y}]) + F_R (\rho_{k}^{-n} T_{0,\rho_{k}} [\nu_{y}], \alpha),$$ $F_R(\mu_{\tilde{x}_k , \tilde{\tau}_k},\alpha) \to 0$. 
This proves $\eqref{alpha}$.

Rename $\tilde{x}_k$, $\tilde{\sigma}_{l_k}$ and $\tilde{\tau}_{k}$ to be $x_k$, $\sigma_k$ and $\tau_k$.
We have proven that: 
$$ \mu_{x_k, \sigma_{k}} \rightharpoonup \nu^{\infty} \mbox{ and } \mu_{x_k, \tau_k} \rightharpoonup \alpha, $$
with $\sigma_{k}< \tau_{k}$ ( since $\rho_{l_k} < \rho_{k}$ by $\eqref{tau}$) and $\nu^{\infty}$ conical and non-flat. 

If $\alpha$ were flat,
we would have $F(\alpha)=0<\epsilon_{0}$ and $F(\nu^{\infty}) \geq \epsilon_{0}$. This contradicts Lemma $\ref{connectednessblowups}$.
Therefore $\alpha$ cannot be flat and $y \in \mathcal{S}_{\nu}$.

\end{proof}
\begin{remark}
The proof of $\eqref{alpha}$ is similar to the proof of Lemma 2.6 in $\cite{B}$. 
\end{remark}

We now use Theorem $\ref{connectednessblowups}$ to deduce two important corollaries.

\begin{corollary}\label{isolate3unif}
Let $\mu$ be a $3$-uniform measure in $\RR^{d}$. Then the singular set of $\mu$ is discrete. Namely, for every $K$ compact subset of $\RR^{d}$, $\left| \mathcal{S}_{\mu} \cap K \right| < \infty$. Here, $|A|$ denotes the cardinality of the set $A \subset \RR^{d}$.
\end{corollary}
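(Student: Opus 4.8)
The plan is to argue by contradiction using Theorem~\ref{accumsing} together with the base case. Suppose $\mathcal{S}_\mu$ is not discrete; then there is a compact $K \subset \RR^d$ with $\mathcal{S}_\mu \cap K$ infinite, and hence, by the Bolzano--Weierstrass theorem, an accumulation point $x_0 \in K$ and a sequence $\{x_j\}_j \subset \mathcal{S}_\mu$ with $x_j \neq x_0$ and $x_j \to x_0$. I would then blow up at $x_0$: set $r_j = |x_j - x_0| \downarrow 0$ and $y_j = (x_j - x_0)/r_j$, so that $|y_j| = 1$; passing to a subsequence we may assume $y_j \to y$ with $|y| = 1$, in particular $y \neq 0$. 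Since $\mbox{supp}(\mu)$ is closed, $x_0 \in \mbox{supp}(\mu)$, so Theorem~\ref{accumsing} applies to the sequences $\{x_j\}_j$ and $\{r_j\}_j$ and yields $y \in \mathcal{S}_{\mu^{x_0}}$, where $\mu^{x_0}$ is the normalized tangent to $\mu$ at $x_0$.

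The next step is to invoke the base case to contradict this. By Theorem~\ref{uniqueness}, $\mu^{x_0}$ is a conical $3$-uniform measure. By Theorem~\ref{onesingularity} (equivalently, by the dichotomy of Corollary~\ref{tangentsof3uniform}), there is $\gamma > 0$ such that $\mbox{supp}(\mu^{x_0}) \setminus \{0\}$ is a $C^{1,\gamma}$ submanifold of dimension $3$; in particular every nonzero point of $\mbox{supp}(\mu^{x_0})$ is flat, so $\mathcal{S}_{\mu^{x_0}} \subseteq \{0\}$. Since $|y| = 1 \neq 0$, this contradicts $y \in \mathcal{S}_{\mu^{x_0}}$. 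Hence no such compact $K$ exists, i.e. $|\mathcal{S}_\mu \cap K| < \infty$ for every compact $K$, which is exactly the assertion that $\mathcal{S}_\mu$ is discrete.

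I do not expect a genuine obstacle here: all the analytic content has already been extracted into Theorem~\ref{accumsing} (blow-ups preserve singularities) and Theorem~\ref{onesingularity} (a conical $3$-uniform measure has at most one singular point, at the origin). The only point requiring a little care is the choice of the rescaling factor $r_j = |x_j - x_0|$, which is made precisely so that the rescaled singularities $y_j$ stay at distance $1$ from the origin while remaining in $\overline{B(0,1)}$ as Theorem~\ref{accumsing} requires; this guarantees that the limiting singularity $y$ of $\mu^{x_0}$ is genuinely nonzero, which is what makes the contradiction with $\mathcal{S}_{\mu^{x_0}} \subseteq \{0\}$ possible.
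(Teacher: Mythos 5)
Your argument is correct and is essentially the paper's proof: both pick an accumulation point $x_0$ of $\mathcal{S}_\mu$, rescale by $r_j = |x_j - x_0|$ so that the rescaled singularities sit on $\partial B(0,1)$, invoke Theorem~\ref{accumsing} to place the limit $y$ in $\mathcal{S}_{\mu^{x_0}}$, and then contradict $|y|=1$ using the base-case fact that a conical $3$-uniform measure is regular away from the origin. The only cosmetic difference is that the paper routes the last step through Corollary~\ref{tangentsof3uniform} rather than citing Theorem~\ref{onesingularity} directly, which is equivalent.
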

\begin{proof}
Assume not. Then there exists $K$ compact subset of $\RR^{d}$ such that $\left|\mathcal{S}_{\mu} \cap K\right| = \infty$ . In particular there exists a sequence of points $\left\lbrace x_j \right\rbrace _{j} \subset \mathcal{S}_{\mu} \cap K$ converging to some  $x_{\infty} \in K$. Moreover, $x_{\infty} \in \mbox{supp}(\mu)$ since the support of a measure is a closed set.
Let $r_j = |x_j-x_{\infty}|$ and $y_j = \frac{x_{j}-x_{\infty}}{r_j}$. Then by Theorem $\ref{uniqueness}$, $\mu_{x_{\infty} , r_j } \rightharpoonup \nu$, $\nu$ normalized tangent to $\mu$ at $x_{\infty}$ and by compactness, we can assume by passing to a subsequence if necessary that $y_j \to y \in \partial B(0,1)$. 
By $\eqref{nuy}$, $y \in \mbox{supp}(\nu)$. Since $y \neq 0$, $y$ must be a flat point of $\mbox{supp}(\nu)$ by Corollary $\ref{tangentsof3uniform}$. This contradicts Theorem $\ref{accumsing}$.

\end{proof}

\begin{corollary} \label{convsingset}
Let $\mu$ be an $n$-uniform measure in $\RR^{d}$, $x_0 \in \mbox{supp}(\mu)$, $\nu$ normalized tangent to $\mu$ at $x_0$, $\left\lbrace r_j \right\rbrace _{j}$ sequence of positive radii decreasing to $0$, $\epsilon >0$.
Then there exists $N = N(\epsilon)$ such that:
\begin{equation}
n \geq N \implies \frac{\mathcal{S}_{\mu} - x_0}{r_j} \cap \overline{B(0,1)} \subset \left( \mathcal{S}_{\nu} \right) _{\epsilon} 
\end{equation}
where $\left( \mathcal{S}_{\nu} \right) _{\epsilon} = \left\lbrace y \in \RR^{d} ; dist(y, \mathcal{S}_{\nu}) < \epsilon \right\rbrace$.
\end{corollary}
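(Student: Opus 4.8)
The plan is to prove this by contradiction and reduce it directly to Theorem~\ref{accumsing}; no new ideas are needed beyond compactness of $\overline{B(0,1)}$.

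Suppose the inclusion fails for some fixed $\epsilon>0$. Then I can extract a subsequence $\{r_{j_k}\}_k$ of $\{r_j\}_j$ together with points $x_k\in\mathcal{S}_{\mu}$ such that, setting $z_k=\frac{x_k-x_0}{r_{j_k}}$, one has $|z_k|\le 1$ (equivalently $|x_k-x_0|\le r_{j_k}\to 0$) and $\mathrm{dist}(z_k,\mathcal{S}_{\nu})\ge\epsilon$ for every $k$. Using compactness of $\overline{B(0,1)}$ I pass to a further subsequence so that $z_k\to y\in\overline{B(0,1)}$; since $\mathrm{dist}(\,\cdot\,,\mathcal{S}_{\nu})$ is $1$-Lipschitz, the limit satisfies $\mathrm{dist}(y,\mathcal{S}_{\nu})\ge\epsilon$, so $y\notin\mathcal{S}_{\nu}$. (If $\nu$ is flat then $\mathcal{S}_{\nu}=\emptyset$ and this step is vacuous, with $\mathrm{dist}(\,\cdot\,,\emptyset)\equiv+\infty$, so the target set $(\mathcal{S}_{\nu})_{\epsilon}$ is empty and the argument still applies.)

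Now I apply Theorem~\ref{accumsing} to the measure $\mu$, the point $x_0$, the sequence of singular points $\{x_k\}_k\subset\mathcal{S}_{\mu}$, and the radius sequence $\{r_{j_k}\}_k$, which still decreases to $0$; since $\frac{x_k-x_0}{r_{j_k}}=z_k\in\overline{B(0,1)}$ and $z_k\to y$, Theorem~\ref{accumsing} yields $y\in\mathcal{S}_{\mu^{x_0}}$. Because $\nu=\mu^{x_0}$ is the normalized tangent of $\mu$ at $x_0$ (Definition~\ref{normalized}), this reads $y\in\mathcal{S}_{\nu}$, contradicting $\mathrm{dist}(y,\mathcal{S}_{\nu})\ge\epsilon>0$. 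Hence the inclusion holds for all indices $j$ beyond some threshold $N$, which depends on $\epsilon$ (and, of course, on $\mu$, $x_0$ and the sequence $\{r_j\}$).

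I do not expect any genuine obstacle here: the corollary is essentially a compactness repackaging of Theorem~\ref{accumsing}. The only points requiring a moment's care are the index bookkeeping---matching each singular point $x_k$ with the radius $r_{j_k}$ used to rescale it when invoking Theorem~\ref{accumsing}---and the degenerate case in which $\nu$ is flat, both of which are handled above.
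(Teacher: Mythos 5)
Your proof is correct and matches the paper's own argument essentially line for line: argue by contradiction, pull out a subsequence of radii and singular points violating the inclusion, use compactness of $\overline{B(0,1)}$ to extract a convergent subsequence, observe that the limit $y$ lies outside $\mathcal{S}_{\nu}$, and invoke Theorem \ref{accumsing} for a contradiction. The only additions beyond the paper's text (which the paper leaves implicit) are the remark on the Lipschitz continuity of the distance function to pass $\mathrm{dist}(z_k,\mathcal{S}_{\nu})\ge\epsilon$ to the limit, and the explicit handling of the degenerate case $\mathcal{S}_{\nu}=\emptyset$; both are harmless clarifications, not a different approach.
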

\begin{proof}
Suppose not. Then we can construct a subsequence $\left\lbrace s_j \right\rbrace$ of $\left\lbrace r_j \right\rbrace$ so that: 
$$ \frac{\mathcal{S}_{\mu} - x_0}{s_j} \cap \overline{B(0,1)} \cap \left( \RR^{d} \backslash { \left( \mathcal{S}_{\nu} \right) _{\epsilon}}\right) \neq \emptyset .$$
Consequently, we can find points $x_j \in \mathcal{S}_{\mu}$ such that $y_{j} = \frac{x_j - x_0}{s_j} \in \overline{B(0,1)} $ , $dist(y_j , \mathcal{S}_{\nu}) \geq \epsilon $ and $y_j \to y$. In particular, $y \notin \mathcal{S}_{\nu}$ since $d(y , \mathcal{S}_{\nu}) \geq \epsilon$. This contradicts Theorem $\ref{accumsing}$.
\end{proof}
We are now ready to prove the main theorem of this section.

 \begin{theorem}
 Let $\mu$ be an $n$-uniform measure in $\RR^{d}$ , $ 3 \leq n \leq d$. Then 
 \begin{equation} \label{hdimsing}
 dim_{\mathcal{H}}(\mathcal{S}_{\mu}) \leq n-3,
\end{equation}
where $dim_{\mathcal{H}}$ denotes the Hausdorff dimension.
\end{theorem}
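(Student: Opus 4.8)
The plan is to run Federer's dimension-reduction argument, using the base case $n=3$ (Corollary \ref{isolate3unif}) as the anchor and Theorem \ref{accumsing} (via Corollary \ref{convsingset}) as the engine that lets the argument descend through blow-ups. The statement to prove is that $\dim_{\mathcal{H}}(\mathcal{S}_\mu)\le n-3$ for any $n$-uniform measure in $\RR^d$.

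\medskip

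\noindent\textbf{Setup of the induction.} I would argue by induction on $n$. The base case is $n=3$: by Corollary \ref{isolate3unif} the singular set $\mathcal{S}_\mu$ is discrete, hence has Hausdorff dimension $0=n-3$. For the inductive step, suppose the bound $\dim_{\mathcal{H}}(\mathcal{S}_\lambda)\le m-3$ holds for every $m$-uniform measure with $3\le m<n$, and let $\mu$ be $n$-uniform. The heart of Federer's argument is a measure-theoretic dichotomy applied to the ``spine'' of the singular set. For each $x_0\in\mathcal{S}_\mu$ pass to the normalized tangent $\nu=\mu^{x_0}$, which is conical $n$-uniform. By Corollary \ref{convsingset}, any sequence of points $x_j\in\mathcal{S}_\mu$ approaching $x_0$ produces, after rescaling, limit points lying in $\mathcal{S}_\nu$; so the infinitesimal structure of $\mathcal{S}_\mu$ near $x_0$ is controlled by the singular set of the conical measure $\nu$. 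The key geometric input on conical measures is that $\mathcal{S}_\nu$ contains the largest linear subspace $V$ along which $\nu$ is translation-invariant (the spine), and — crucially — $\nu$ descends to an $(n-\dim V)$-uniform measure on $V^\perp$ whose singular set is $\mathcal{S}_\nu\cap V^\perp$ together with the cone structure; if $\dim V = n$ then $\nu$ is flat, contradicting $x_0\in\mathcal{S}_\mu$, so $\dim V\le n-1$, and in fact one must rule out $\dim V\in\{n-1,n-2\}$ using the classification/analyticity structure so that effectively the transverse singular set is that of an $m$-uniform measure with $m\le n-1$ but more sharply $m$ small enough that induction applies. Concretely: stratify $\mathcal{S}_\mu=\bigcup_{k=0}^{n-3}\mathcal{S}_\mu^{(k)}$ where $\mathcal{S}_\mu^{(k)}$ is the set of $x_0$ whose tangent cone $\nu$ has spine of dimension exactly $k$; show $\dim_{\mathcal{H}}(\mathcal{S}_\mu^{(k)})\le k$ by a covering argument that at scale $r$ covers $\mathcal{S}_\mu^{(k)}\cap B(x_0,r)$ by $\sim r^{-k}$ balls of radius $\epsilon r$ (this is exactly what Corollary \ref{convsingset} supplies: near points with $k$-dimensional spine the singular set is $\epsilon$-close to a $k$-plane at small scales), and note $k\le n-3$ because a spine of dimension $\ge n-2$ would force the transverse conical measure to be $j$-uniform with $j\le 2$, hence flat by Preiss' classification of $j=1,2$ uniform measures, contradicting non-flatness.

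\medskip

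\noindent\textbf{Key steps in order.} (1) Record that conical $n$-uniform $\nu$ splits, after rotation, as $\nu = \nu'\times\mathcal{H}^k\res V$ with $\nu'$ a conical $(n-k)$-uniform measure in $V^\perp$ whose support is not a union of parallel $(n-k-j)$-planes times anything — i.e. $\nu'$ has ``no spine'' — and $\mathcal{S}_\nu = \mathcal{S}_{\nu'}\times V$; this uses Theorem \ref{momentsconical}, Corollary \ref{algvarcone}, and a standard splitting lemma for measures invariant under a subgroup of translations. (2) Observe that if $n-k\le 2$ then $\nu'$ is flat by Preiss' $n=1,2$ classification, so $\nu$ is flat — impossible. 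Hence at every singular point the spine dimension $k$ satisfies $k\le n-3$. (3) Define the strata $\mathcal{S}_\mu^{(k)}$ for $0\le k\le n-3$. (4) Prove each stratum has $\dim_{\mathcal{H}}\le k$: fix $x_0\in\mathcal{S}_\mu^{(k)}$, let $\nu$ be its tangent with spine $V$, $\dim V=k$; since $\nu$ is conical with spine exactly $V$ its singular set off the spine is ``thin'' — in the base-case spirit one shows $\mathcal{S}_\nu\setminus V$ accumulates only on $V$, so $\mathcal{S}_\nu\subset V$ at unit scale in a neighborhood of origin (this is the $(n-k)$-dimensional analog of Corollary \ref{isolate3unif}, provable by the induction hypothesis applied to the transverse $(n-k)$-uniform measure $\nu'$, whose singular set has dimension $\le n-k-3$, combined with the cone structure — but for the covering count we only need that $\mathcal{S}_\nu$ lies within an $\epsilon$-neighborhood of the $k$-plane $V$ near $0$). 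Then Corollary \ref{convsingset} transfers this: for $j$ large, $(\mathcal{S}_\mu-x_0)/r_j\cap\overline{B(0,1)}\subset(\mathcal{S}_\nu)_\epsilon\subset(V)_{2\epsilon}$, so $\mathcal{S}_\mu\cap B(x_0,r_j)$ is covered by $C\epsilon^{-k}$ balls of radius $2\epsilon r_j$. Iterating this covering estimate across scales gives $\mathcal{H}^{k+\delta}(\mathcal{S}_\mu^{(k)})=0$ for all $\delta>0$, i.e. $\dim_{\mathcal{H}}(\mathcal{S}_\mu^{(k)})\le k$. (5) Conclude $\dim_{\mathcal{H}}(\mathcal{S}_\mu)=\max_k\dim_{\mathcal{H}}(\mathcal{S}_\mu^{(k)})\le n-3$.

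\medskip

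\noindent\textbf{The main obstacle} is step (4): making the covering count rigorous requires more than the pointwise statement of Corollary \ref{convsingset}, because that corollary's threshold $N(\epsilon)$ depends on the point $x_0$, and Federer's argument needs a version where the covering works uniformly — or at least works at \emph{some} small scale $r(x_0)$ for every $x_0$ in a stratum, after which one uses a Vitali/Besicovitch covering of the stratum by such balls and sums. This is the standard but delicate ``compactness + iteration'' part of the dimension-reduction scheme; one typically handles it by arguing by contradiction with a putative positive $\mathcal{H}^{k+\delta}$-measure subset, extracting a point of positive upper density, blowing up there, and deriving a contradiction with the cone having spine dimension exactly $k$. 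The secondary subtlety is verifying the splitting/no-spine structure of conical measures (step 1) precisely enough that the transverse factor $\nu'$ is genuinely $(n-k)$-uniform and its singular set genuinely has dimension $\le (n-k)-3$ by induction — here one must be careful that the product measure's singular set is exactly the product, which follows from Corollary \ref{supportrect} and the fact that tangents of products are products of tangents.
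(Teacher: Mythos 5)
Your overall plan — Federer dimension reduction anchored at $n=3$ via Corollary \ref{isolate3unif} and propelled by Theorem \ref{accumsing} / Corollary \ref{convsingset} — is the same general strategy as the paper, but your stratified implementation has a genuine gap in step (4), and the paper in fact organizes the induction quite differently. The gap: you assert that if $x_0\in\mathcal{S}_\mu^{(k)}$ has tangent cone $\nu=\nu'\times\mathcal{H}^k\res V$ with spine $V$ of dimension exactly $k$, then $\mathcal{S}_\nu$ sits in an $\epsilon$-neighborhood of $V$ near the origin because ``$\mathcal{S}_\nu\setminus V$ accumulates only on $V$.'' This is false whenever $k<n-3$. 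By your own step (1), $\mathcal{S}_\nu=\mathcal{S}_{\nu'}\times V$, and the induction hypothesis only gives $\dim_{\mathcal H}\mathcal{S}_{\nu'}\le (n-k)-3$, which is positive precisely when $k<n-3$; since $\nu'$ is conical, $\mathcal{S}_{\nu'}$ is then a nontrivial cone, so $\mathcal{S}_\nu$ contains whole rays leaving $V$ and certainly does not lie in a thin tube around $V$ at unit scale. The statement that \emph{is} true is that the $k$-stratum $\mathcal{S}^{(k)}_\nu$ lies inside $V$ (any $y\in\mathcal{S}_\nu\setminus V$ has its own tangent translation-invariant along the extra radial direction, hence spine $\ge k+1$), but to run the covering on $\mathcal{S}^{(k)}_\mu$ you then need a \emph{stratified} refinement of Theorem \ref{accumsing}/Corollary \ref{convsingset} — that singular points with spine dimension $\le k$ blow up into singular points with spine dimension $\le k$ — which the paper does not prove and which requires its own argument, since spine dimension can only increase, not decrease, under blow-up.

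The paper avoids stratification altogether. From $\mathcal{H}^s(\mathcal{S}_\mu)>0$ it picks a point $x_0$ of positive upper $\mathcal{H}^s_\infty$-density, uses Corollary \ref{convsingset} plus an open-cover/upper-semicontinuity argument to conclude $\mathcal{H}^s(\mathcal{S}_\nu\cap\overline{B(0,1)})>0$ for the (conical) normalized tangent $\nu=\mu^{x_0}$, and then repeats this once more at a nonzero density point $\xi\in\mathcal{S}_\nu$ to obtain a tangent $\lambda$ with $\mathcal{H}^s(\mathcal{S}_\lambda)>0$. Because $\nu$ is conical, $\lambda$ is translation-invariant along $\xi$ (proved directly from the blow-up formula, not from a splitting/no-spine lemma), so $\mbox{supp}(\lambda)=\RR e_1\oplus A$, $\lambda_0=\mathcal{H}^{m-1}\res A$ is $(m-1)$-uniform by Kowalski--Preiss, and $\mathcal{S}_\lambda\subset\RR e_1\oplus\mathcal{S}_{\lambda_0}$. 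Hence $s\le\dim_{\mathcal H}\mathcal{S}_\lambda\le 1+\dim_{\mathcal H}\mathcal{S}_{\lambda_0}\le 1+(m-4)=m-3$ by the induction hypothesis applied to the single $(m-1)$-uniform measure $\lambda_0$. This reduces the dimension by exactly one per induction step, needs no uniform covering count across a stratum and no tracking of spine dimension through limits; the only measure-theoretic input is that $\mathcal{H}^s_\infty$-content survives a single blow-up at a density point — exactly the ``positive upper density plus compactness'' move you correctly flag at the end as the standard fix, but deployed twice rather than stratum-by-stratum.
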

\begin{proof}

The theorem holds for $n=3$ by Theorem $\ref{onesingularity}$.
Let $m< d$ and assume the theorem holds for all $l$-uniform measures in $\RR^{d}$ such that $l < m$. We want to prove that it holds for $m$-uniform measures.

Let $\mu$ be an $m$-uniform measure.
Suppose that $s \in \RR_{+}$ is such that $\mathcal{H}^{s}(\mathcal{S}_{\mu}) >0$.

We start with an overview of the proof. We first prove that there exists a singular point $x_0$ of the support of $\mu$ and  a tangent $\nu$ of $\mu$ at $x_0$ such that the following holds
$$\mathcal{H}^{s}(\mathcal{S}_{\nu} \cap \overline{B(0,1)})>0.$$
From that, we deduce that there exists some non-zero singular point $\xi$ of the support of $\nu$ such that the tangent $\lambda$ to $\nu$ at $\xi$ satisfies $\mathcal{H}^{s}(\mathcal{S}_{\lambda} \cap \overline{B(0,1)})>0$. Note that by Theorem $\ref{uniqueness}$, since $\mu$ is $m$-uniform, $\nu$ is conical. The advantage of repeating this procedure is that since $\nu$ is conical, the support of $\lambda$ is in fact translation invariant along the vector $\xi$ and so $\lambda$ can be decomposed into $\mathcal{L}^1 \times \lambda_0$ where $\mathcal{L}^1$ is $1$-Lebesgue measure and $\lambda_0$ is $(m-1)$-uniform. We then apply the induction hypothesis to $\lambda_0$ to finish the proof.

We find  a singular point $x_0$ of the support of $\mu$ such that the following holds. Let $\nu$ be the normalized tangent to $\mu$ at $x_0$. Then:
$$\mathcal{H}^{s}(\mathcal{S}_{\nu} \cap \overline{B(0,1)})>0.$$

By Lemma 4.6 in $\cite{M}$, $$\mathcal{H}^{s}(\mathcal{S}_{\mu}) >0 \iff \mathcal{H}^{s}_{\infty}(\mathcal{S}_{\mu}) >0.$$
We will use $\mathcal{H}^{s}_{\infty}$ instead of $\mathcal{H}^{s}$ to allow a larger choice of coverings.
Since $\mathcal{H}_{\infty}^{s}(\mathcal{S}_{\mu} )>0$, there exists a compact set $K$ such that $\mathcal{H}_{\infty}^{s}(\mathcal{S}_{\mu} \cap K )>0$. Let $\tilde{\mathcal{S}_{\mu}}= \mathcal{S_{\mu}} \cap K$.
We have
\begin{equation}\label{densityupper'}
\theta^{s,*}(\mathcal{H}^{s}_{\infty}\res \tilde{{\mathcal{S}_{\mu}} }, z) \geq 2^{-s},
\end{equation}
for $\mathcal{H}^{s}$-almost every $z \in  \tilde{\mathcal{S}_{\mu}}$.
This follows from Theorem 3.26 (2), in $\cite{S}$ since $\tilde{\mathcal{S}_{\mu}}$ is a compact subset of $\RR^d$.
In particular, there exists $x_0 \in \tilde{\mathcal{S}_{\mu}}$ such that:
\begin{equation}
\theta^{s,*}(\mathcal{H}^{s}_{\infty}\res \tilde{{\mathcal{S}_{\mu}} }, x_0) \geq 2^{-s},
\end{equation}

Consequently, there exists a sequence of  radii $\left\lbrace r_{j} \right\rbrace _{j}$ decreasing to $0$ such that:
$$\mathcal{H}_{\infty}^{s} \left(\overline{B(0,1)} \cap \frac{\tilde{{\mathcal{S}}_{\mu}} - x_0}{r_j}\right) \geq 2^{-s}. $$
Since $r_j \downarrow 0$, $\mu_{ x_0 , r_j } \rightharpoonup \nu$ where $\nu= \mu^{x_0}$, the normalized tangent to $\mu$ at $x_0$.
By Theorem $\ref{convsingset}$, for all $\epsilon >0$, there exists $j_{0}$ such that: 
\begin{equation} \label{nhoodsing}
\frac{\tilde{\mathcal{S}_{\mu}} - x_{0}}{r_j} \cap \overline{B(0,1)} \subset \left( \mathcal{S}_{\nu} \right)_{\epsilon}  \mbox{ whenever }         j \geq j_{0}.
\end{equation}
Pick $\delta > 0$ and let $\left\lbrace E_k \right\rbrace _{k}$ be a covering of $\tilde{\mathcal{S}_{\nu}} = \mathcal{S}_{\nu} \cap \overline{B(0,1)}$ such that:
$$\mathcal{H}^{s}_{\infty}(\tilde{\mathcal{S}_{\nu}} ) >  \omega_{s} 2^{-s} \sum_{k=1}^{\infty} (diam(E_k))^{s}  - \delta.$$
We can assume that the sets $E_k$ are open (see Theorem 4.4 in $\cite{M}$).
Since $\bigcup E_{k}$ is open, $\tilde{\mathcal{S}_{\nu}}$ is compact and $\tilde{\mathcal{S}_{\nu}} \subset \bigcup E_{k}$, we can cover $\tilde{\mathcal{S}_{\nu}}$ with finitely many $E_k$, $k=1,\ldots,K$. Letting $E$ be the union of this finite cover and $\epsilon$ be a number smaller than the minimum of the diameters of the  $E_k$'s in this finite cover, we have:
$$(\mathcal{S}_{\nu} )_{\epsilon} \subset E.$$
It follows from $\eqref{nhoodsing}$ that for $j$ large enough, we have
$$\mathcal{S}_{j} \subset E,$$
where $\mathcal{S}_{j} = \frac{\tilde{\mathcal{S}_{\mu}} - x_0}{r_j} \cap \overline{B(0,1)}$.
Hence, for $j$ large, since $\left\lbrace E_k \right\rbrace_{k=1}^{K}$ covers $\mathcal{S}_{j}$
\begin{align*}
\mathcal{H}^{s}_{\infty}(\mathcal{S}_{j} )  & \leq \omega_s 2^{-s} \sum_{k=1}^{K} (diam(E_k))^s, \\ &\leq  \mathcal{H}^{s}_{\infty}(\tilde{\mathcal{S}_{\nu}})+\delta.
\end{align*}
Since $\delta$ was chosen arbitrarily, we get $
\mathcal{H}^{s}_{\infty}(\mathcal{S}_{j} ) \leq \mathcal{H}_{\infty}^{s}(\tilde{\mathcal{S}_{\nu}})$.
Letting $j \to \infty$, we get:
$$2^{-s} \leq \limsup \mathcal{H}^{s}_{\infty}(\mathcal{S}_{j} ) \leq  \mathcal{H}_{\infty}^{s}(\tilde{\mathcal{S}_{\nu}}).$$
This gives $\mathcal{H}_{\infty}^{s}(\mathcal{S}_{\nu}) \geq  \mathcal{H}_{\infty}^{s}(\tilde{\mathcal{S}_{\nu}}) >0$. The claim is thus proved. 

The advantage of $\nu$ over $\mu$ is that it is conical. Thus if we blow up at a non-zero point of $\nu$, we claim that we obtain a measure that is translation invariant. 
Since $\mathcal{H}^{s}(\mathcal{S}_{\nu} \cap \overline{B(0,1)})>0$, by the same reasoning as for $\mu$, there exists $\xi \neq 0$, $\xi \in \mathcal{S}_{\nu} \cap \overline{B(0,1)}$ such that : $$\theta^{s,*}(\mathcal{H}_{\infty}^{s} \res \mathcal{S}_{\nu}, \xi) \geq 2^{-s}.$$ In particular, there exists a decreasing sequence $\left\lbrace s_j \right\rbrace$ such that $\mathcal{H}^{s}_{\infty} (\mathcal{S}_{\nu} \cap \overline{B(\xi, s_j)}) \geq 2^{-s} s_{j}^{s}$ and $\nu_{\xi, s_j} \rightharpoonup \lambda$, where $\lambda= \nu^{\xi}$ is the normalized tangent measure to $\nu$ at $\xi$. Since $\nu$ is uniform and $\xi$ is a singular point, $\lambda$ is a non-flat conical measure.
The same procedure as above gives:
\begin{equation}\label{dimred1}
\mathcal{H}^{s}(\mathcal{S}_{\lambda} \cap \overline{B(0,1)})>0.
\end{equation}
Let $\Sigma= \mbox{supp}(\lambda)$. We claim that 
$$ \Sigma = \RR  e_1\oplus A $$ for some $A$ subset of a $(d-1)$-plane of $\RR^d$  such that $\mathcal{H}^{m-1} \res A$ is $(m-1)$-uniform.
We will first prove that \begin{equation} \label{transinvmeasure}
T_{t \xi, 1} [\lambda]=\lambda 
\end{equation}
 for any $t>0$. 

Take $t>0$.
Then, on one hand,  noting that for $z \in \RR^{d}$:
\begin{align*}
T_{(1+t)\xi , s_j}(z) &= \frac{z-\xi - t \xi}{s_j}, \\
&= \frac{\frac{z}{1+t}- \xi}{\frac{s_j}{1+t}} , \\
&= T_{\xi, \frac{s_j}{1+t}} \circ T_{0, 1+t} (z), 
\end{align*}
we get
 \begin{align}\label{nualpha}
\nu_{(1+t)\xi, s_j} &=s_{j}^{-m} T_{\xi, \frac{s_j}{1+t}}[T_{0, 1+t}[\nu]],   \nonumber   \\
&= s_{j}^{-m} (1+t)^{m} T_{\xi,\frac{s_j}{1+t}}[\nu]  , \mbox{ since } \nu \mbox{ is conical} \nonumber \\
& \rightharpoonup \lambda,
\end{align}
since the sequence $\frac{s_j}{1+t} \to 0$ and $s_{j}^{-m} (1+t)^{m} T_{\xi,\frac{s_j}{1+t}}[\nu](B(0,1))=\lambda(B(0,1))=\omega_{m}$.

On the other hand, we have
\begin{align*}
T_{(1+t)\xi, s_j}(z)& = \frac{z-(1+t)\xi}{s_j}, \\
&=\frac{z- (1+(1-s_j)t)\xi}{s_j} -t \xi, \\
&= T_{t \xi, 1} \circ T_{(1+(1-s_j)t)\xi, s_j} (z).
\end{align*}
We now prove that 
\begin{equation}\label{transinvstep}
{s_j}^{-m} T_{(1+(1-s_j)t) \xi , s_j}[\nu] \to \lambda.
\end{equation}

Let $\phi \in \mathcal{L}(R)$.
Then, for $j$ large enough so that $|1-s_j| \leq 2$ we have:
\begin{align*}
&{s_j}^{-m} \left| \int \phi(z) dT_{(1+(1-s_j) t)\xi, s_j}[\nu](z) - \int \phi(z) dT_{(1+t)\xi, s_j}[\nu](z) \right| \\& \leq 
{s_j}^{-m}  \left| \int \left( \phi(z-(1+(1-s_j) t)\xi)  - \phi(z-(1+t)\xi) \right) dT_{0, s_j}[\nu](z) \right|, \\
& \leq {s_j}^{-m} \int_{B(0, R+(1+2|t|)|\xi|)} |s_j||\xi| |t| dT_{0,s_j}[\nu](z) , \\
& \leq |s_j||\xi| |t|\omega_{m} (R+(1+2|t|)|\xi|)^{m}.
\end{align*}
Taking the supremum over all $\phi \in \mathcal{L}(R)$, we get: 
\begin{align}\label{transinvstep'}
A_{j} &:= F_{R}({s_j}^{-m} T_{(1+(1-s_j)t) \xi , s_j}[\nu], {s_j}^{-m} T_{(1+t) \xi , s_j}[\nu]), \nonumber \\ & \leq |s_j||\xi| |t|\omega_{m} (R+(1+2|t|)|\xi|)^{m},
\end{align}
which goes to $0$ as $j \to \infty$ since $s_{j} \to 0$.
We have 
\begin{equation}
\label{triineq} F_{R}({s_j}^{-m} T_{(1+(1-s_j)t) \xi , s_j}[\nu], \lambda) \leq A_{j} + F_{R}({s_j}^{-m} T_{(1+t) \xi , s_j}[\nu], \lambda).
\end{equation}
Since $A_j \to 0$ by $\eqref{transinvstep'}$ and, according to $\eqref{nualpha}$, $F_{R}({s_j}^{-m} T_{(1+t) \xi , s_j}[\nu], \lambda) \to 0$, by using $\eqref{triineq}$, we prove $\eqref{transinvstep}$.

This proves $\eqref{transinvmeasure}$ from which it follows that
\begin{equation} \label{transinv}
\Sigma - t \xi = \Sigma \mbox{ for } t>0.
\end{equation}
Indeed, for $t>0$,
\begin{align*}
z \in \Sigma & \iff \mbox{For all } r>0, \lambda(B(z,r))>0, \\
&\iff \mbox{ For all } r>0, T_{t\xi, 1} [\lambda](B(z,r)) >0, \\ & \iff \mbox{ For all } r>0, \lambda(B(z+t\xi, r))>0, \\ &\iff z \in \Sigma - t \xi .
\end{align*}
Adding $t\xi$ on both sides of $\ref{transinv}$, we see that
\begin{equation}\label{transinvfinal}
\Sigma - t \xi = \Sigma \mbox{ for } t \in \RR.
\end{equation}

Let $e_1 = \frac{\xi}{|\xi|}$ and $A= \left\lbrace x \in \Sigma ; x.{e_1}=0 \right\rbrace $.  
We claim that 
\begin{equation}\label{directproduct} \Sigma= \RR e_1 \oplus A.  
\end{equation}
On one hand, if $z \in \RR e_1 \oplus A$, then there exists $z' \in A$ and $t \in \RR$ such that:
$$z= z' + te_1.$$
Since $A\subset \Sigma$ by definition, this implies that $z \in \Sigma+ te_1$ and consequently, $z \in \Sigma$ by $\eqref{transinv}$.
On the other hand, if $z \in \Sigma$, we can write:
$$ z= (z - \left\langle z,e_1 \right\rangle e_1)+\left\langle z,e_1 \right\rangle e_1.$$
Let $t_1 =  \left\langle z,e_1 \right\rangle$. By $\eqref{transinv}$, $z-t_1e_1 \in \Sigma$. Moreover, $\left\langle z-t_1e_1 , e_1 \right\rangle =0$. Therefore, $z-t_1e_1\in A$ and $z \in \RR e_1 + A$.
The uniqueness of such a decomposition follows from the fact that $\RR e_1$ and $A$ are orthogonal by construction.
This proves $\eqref{directproduct}$.

So there exists $c>0$ so that $\lambda = c {\omega_{m}}^{-1} \mathcal{H}^{m} \res \left( \RR e_1 \oplus A \right)$ by Corollary $\ref{supportrect}$.
By Theorem 3.11 in $\cite{KoP}$, $\lambda_{0}= \mathcal{H}^{m-1} \res A$ is $(m-1)$-uniform.

The final step consists in proving that \begin{equation}\label{almostproduct}\mathcal{S}_{\lambda} \subset \RR e_1 \oplus \mathcal{S}_{\lambda_{0}} \cong \RR \times  \mathcal{S}_{\lambda_{0}}.
\end{equation}

We start by proving that if $y \in A $ is a $(m-1)$-flat point of $\lambda_0$, and $t \in \RR$, then  $te_1+y \in \Sigma$ is an $m$-flat point of $\lambda$.
By Theorem $\ref{PTT2}$, if $y$ is a flat point of $\lambda_0$, since $\lambda_0$ is an $(m-1)$-uniform measure, there exists a neighborhood $U'$ of $y$ in $\RR^{d-1}$ (here $\RR^{d-1}$ is identified with the set $\left\lbrace z \in \RR^{d} ; \left\langle z  , e_1 \right\rangle = 0 \right\rbrace$) such that $A \cap U'$ is a $C^{1}$ manifold.
More precisely, there exists $(d-m+1)$ $C^1$- diffeomorphisms $\left\lbrace\psi_j \right\rbrace_{j}$  from a neighborhood $G$ of $\RR^{m-1}$ to $\RR$ such that:
\begin{equation}\label{m-1-manif}
U' \cap A = \left\lbrace z_2 e_2 + \ldots + z_m e_m + \sum_{i=m+1}^{d} \psi_j(z_2,\ldots, z_m) e_j ; (z_1, \ldots, z_m) \in G \right\rbrace
\end{equation}
where $\left\lbrace e_j \right\rbrace _{j=2}^{m}$ is an orthonormal basis of the tangent plane to $\mbox{supp}(\lambda_{0})$ at $y$ and $\left\lbrace e_j \right\rbrace _{j=1}^{d}$ is a completion of $\left\lbrace e_j \right\rbrace _{j=1}^{m}$ to an orthonormal basis of $\RR^{d}$.
 We claim that $\Sigma$ is a $C^{1}$-manifold in the neighborhood $U=\left\lbrace se_1+ z'; (s,z) \in (t-1, t+1) \times U'\right\rbrace$ of $te_1 + y$. Indeed, if $z \in \Sigma \cap U$, then by $\eqref{transinv}$ and $\eqref{m-1-manif}$ we can write 
\begin{equation}\label{m-manifold}
 z= z_1 e_1 + z_2 e_2 + \ldots + z_m e_m + \sum_{i=m+1}^{d} \psi_j(z_2,\ldots, z_m)e_j, 
\end{equation}
where $z_j = \left\langle z,e_j \right\rangle$ for $j>1$ and $z_1 \in (r-1,r+1)$.

We go back to the proof of $\eqref{almostproduct}$. Suppose that $\eta \in \mathcal{S}_{\lambda}$. Then in particular, $\eta \in \Sigma$ and hence $\eta= te_1+y$ where $t \in \RR$, $y \in A$. If $y$ were a flat point of $\lambda_0$, then $\eta$ would be a flat point of $\lambda$. Therefore, $\eta \in \RR \times \mathcal{S}_{\lambda_0}$.

We deduce from $\eqref{almostproduct}$ that 
\begin{equation} \label{hausdorffdimproduct}
dim_{\mathcal{H}}(\mathcal{S}_{\lambda}) \leq dim_{\mathcal{H}}(S_{\lambda_0}) + 1.
\end{equation} 
Note that this inequality holds because $\mathcal{S}_{\lambda}$ and $S_{\lambda_0}$ are Borel sets and the packing dimension of a line is the same as its Hausdorff dimension.
But since $\mathcal{H}^{s}(\mathcal{S}_{\lambda})>0$, 
\begin{equation} \label{hausdorffdimproduct2}
dim_{\mathcal{H}}(\mathcal{S}_{\lambda}) \geq s. 
\end{equation} 
Combining $\eqref{hausdorffdimproduct}$ and $\eqref{hausdorffdimproduct2}$, we get:
\begin{equation}
s-1 \leq dim_{\mathcal{H}}(S_{\lambda_0}).
\end{equation}
On the other hand, $S_{\lambda_0}$ being the singular set of an $(m-1)$-uniform measure, the induction hypothesis implies that $dim_{\mathcal{H}}(S_{\lambda_0}) \leq m-4$. Therefore $s \leq m-3$.

We have proven that 
$$ \mathcal{H}^{s}(\mathcal{S}_{\mu})>0 \implies s \leq m-3.$$
Therefore,
$$dim_{\mathcal{H}}(\mathcal{S}_{\mu}) \leq m-3.$$

\end{proof}
  \section*{Acknowledgements}
 I would like to thank my graduate advisor T. Toro for her invaluable help. I would also like to thank Max Engelstein for helpful discussions during his visit to the University of Washington.

\end{document}